\documentclass[10pt, reqno]{amsart}

\usepackage[T1]{fontenc}
\usepackage{lmodern}
\usepackage{microtype}
\usepackage{amsmath,amssymb,mathtools}
\usepackage{enumitem}
\usepackage{booktabs}
\usepackage{needspace}
\usepackage{tikz}

\usepackage{xcolor}
\usepackage{listings}

\lstdefinestyle{pythonappendix}{
  language=Python,
  basicstyle=\ttfamily\scriptsize,
  numbers=left,
  numberstyle=\tiny\color{gray},
  numbersep=8pt,
  frame=single,
  columns=fullflexible,
  keepspaces=true,
  showstringspaces=false,
  breaklines=true,
  breakatwhitespace=false,
  tabsize=4
}

\usetikzlibrary{decorations.pathmorphing}
\usetikzlibrary{positioning,shapes,arrows,decorations.markings}
\usetikzlibrary{decorations.pathreplacing,calligraphy}
\usetikzlibrary{positioning,arrows.meta}
\usetikzlibrary{fit}
\definecolor{mystery}{rgb}{0,0,0.65}
\usepackage[
  colorlinks=true,
  linkcolor=mystery,
  citecolor=mystery,
  urlcolor=mystery
]{hyperref}

\usepackage{cleveref}

\newcommand{\R}{\mathbb R}
\newcommand{\C}{\mathbb C}
\newcommand{\E}{\mathbb E}
\newcommand{\one}{\mathbf 1}
\newcommand{\per}{\operatorname{per}}
\newcommand{\cyc}{\operatorname{cyc}}
\newcommand{\tr}{\operatorname{tr}}

\newcommand{\diag}{\operatorname{diag}}

\newcommand{\pf}{\operatorname{pf}}
\newcommand{\op}{\mathrm{op}}
\newcommand{\mx}{\mathrm{max}}
\newcommand{\capacity}{\mathrm{cap}}

\newcommand{\norm}[1]{\lVert#1\rVert}

\theoremstyle{plain}
\newtheorem{theorem}{Theorem}[section]
\newtheorem{lemma}[theorem]{Lemma}
\newtheorem{proposition}[theorem]{Proposition}

\newtheorem{fact}[theorem]{Fact}

\theoremstyle{definition}
\newtheorem{definition}[theorem]{Definition}
\newtheorem{remark}[theorem]{Remark}

\title[Hamiltonian Cycles in Tournaments]{The Maximum Number of Hamiltonian\\ Cycles in Tournaments}

\author{Alexander Divoux}
\address{Program in Applied and Computational Mathematics, Princeton University}
\email{adivoux@princeton.edu}

\author{Shouda Wang}
\address{Department of Mathematics, KTH Royal Institute of Technology}
\email{shoudawang@princeton.edu}

\date{August 10, 2026}

\begin{document}
\maketitle

\begin{abstract}
    Let $C(n)$ denote the maximum number of directed Hamiltonian cycles in a tournament on $n$ vertices. We prove that, as $n\to\infty$,
    \[
        C(n) = \left(\frac{\cosh 1}{\cos 1}\pm o(1)\right)\frac{(n-1)!}{2^n}.
    \]
    The bound is asymptotically attained by the regular tournament with vertices $\mathbb Z/n\mathbb Z$ where $(i,j)$ is a directed edge if $j - i \pmod n< n/2$. This answers a question of Alon and proves a conjecture of Wormald. Among the other implications of these results is asymptotically determining the maximum number of Hamiltonian paths in a tournament on $n$ vertices. We provide our own exposition of these proofs, which were originally found by GPT-5.6.

\end{abstract}

\section{Introduction}

A \textit{tournament} $T$ is an orientation of a complete graph. A \textit{Hamiltonian cycle} in $T$ is a directed cycle containing all vertices of $T$, and a \textit{Hamiltonian path} in $T$ is a directed path containing all vertices of $T$. We denote by $C(T)$ the number of Hamiltonian cycles in $T$. Let $C(n) = \max_T C(T)$, where the maximum is taken over all tournaments on $n$ vertices, and let $P(n)$ denote the     analogous maximum for Hamiltonian paths. The quantities $C(n)$ and $P(n)$ are intimately related. Indeed, 
\[
    \frac14P(n-1) \le C(n) \le \frac1nP(n),
\]
where the first inequality follows from adding a vertex with uniformly random adjacencies to a tournament containing $P(n-1)$ Hamiltonian paths, and the second inequality is clear by ignoring edges one by one in each Hamiltonian cycle.

More than 80 years ago, Szele \cite{szele1943} used the linearity of expectation to show that $P(n) \ge n!/2^{n-1}$. The same method also gives $C(n)\ge(n-1)!/2^n$. His proof is considered by many as the first application of the probabilistic method in combinatorics (see, e.g., \cite{AlonSpencer2016, Spencer1994, ErdosSpencer1974}). 

In his 1943 paper, Szele also gave the upper bound $P(n) = O(n!~\!2^{-3n/4})$. This was improved by Alon \cite{Alon1990}, who showed $C(n) = O(n^{3/2}(n-1)!/2^n)$, and subsequently by Friedgut and Kahn \cite{FriedgutKahn2005}, who showed $C(n) = O(n^\alpha(n-1)!/2^n)$ for a constant $\alpha\approx1.249$. A \textit{$1$-factor} in a tournament $T$ is a collection of vertex-disjoint directed cycles whose union contains all vertices of $T$. (Thus, any Hamiltonian cycle is a $1$-factor.) Both of the previous improvements arise from cleverly bounding the number of $1$-factors in a tournament $T$ via permanent estimates for the adjacency matrix of $T$. 

For odd $n$, let $T_n$ be the regular tournament with vertex set $\mathbb Z/ n\mathbb Z$, with a directed edge from $i$ to $j$ when $j-i\pmod n\in\{1,2,\dots,\frac{n-1}2\}$. See Figure \ref{fig:T_7} for a depiction of $T_7$. The original lower bound of $C(n) \ge (n-1)!/2^n$ was first improved by Adler, Alon, and Ross \cite{AdlerAlonRoss2001} to $C(n) \ge(e-o(1))(n-1)!/2^n$. Wormald \cite{Wormald} then gave examples of tournaments based on $T_n$ which show that, for infinitely many values of $n$,
\[
    C(n) \ge (2.85588+ o(1))\frac{(n-1)!}{2^n}.
\]
He also conjectured that this should be close to the correct answer for $C(n)$.

\begin{figure}[!]
    \centering
        \resizebox{4cm}{!}{
        \begin{tikzpicture}[scale=.6,auto=left]
        \tikzset{
          vertex/.style={
            circle,
            fill=black,
            draw=black,
            inner sep=1.8pt
          },
          midarrow/.style args={#1}{
            postaction={decorate},
            decoration={
              markings,
              mark=at position #1 with {
                \arrow{Stealth[length=2.5mm,width=2mm]}
              }
            }
          }
        }
        \foreach \i in {0,1,2,3,4,5,6} {
          \node[vertex] (v\i) at ({90-360*\i/7}:4) {};
        }
        \foreach \i in {0,1,2,3,4,5,6} {
          \pgfmathtruncatemacro{\j}{mod(\i+1,7)}
          \draw[midarrow={.55},line width=1.2pt]
            (v\i) -- (v\j);
        }
        \foreach \i in {0,1,2,3,4,5,6} {
          \pgfmathtruncatemacro{\j}{mod(\i+2,7)}
          \draw[midarrow={.54},line width=1.2pt]
            (v\i) -- (v\j);
        }
        \foreach \i in {0,1,2,3,4,5,6} {
          \pgfmathtruncatemacro{\j}{mod(\i+3,7)}
          \draw[midarrow={.76},line width=1.2pt]
            (v\i) -- (v\j);
        }
        \end{tikzpicture}}
    \caption{The regular tournament $T_7$.} \label{fig:T_7}
\end{figure}

A tournament on $n$ vertices is \textit{regular} if every vertex has in-degree and out-degree exactly $(n-1)/2$ (so, a tournament is regular only if $n$ is odd). The related question of counting Hamiltonian cycles in regular tournaments was considered by Cuckler \cite{Cuckler2007}, who proved that every regular tournament on $n$ vertices contains at least $n!/(2+o(1))^n$ Hamiltonian cycles. This partially answers a conjecture of Friedgut and Kahn \cite{FriedgutKahn2005}, and answers an earlier question of Thomassen \cite{Thomassen1982}. The result was generalized to dense regular digraphs by Ferber, Krivelevich, and Sudakov \cite{FerberKrivelevichSudakov2017}.

Despite $T_n$ being a promising candidate for the tournament attaining $C(n)$, it has been surprisingly difficult to compute the asymptotics of $C(T_n)$ in practice. Indeed, the most natural probabilistic approach simply retrieves the universal bound of Cuckler for regular tournaments, which fails even to beat Szele's original lower bound on $C(n)$. On the other hand, more evidence that $T_n$ is a suitable maximizer for this setting can be observed from work on the related problem of determining, for a fixed constant $k$, the maximum number of directed $k$-cycles in a tournament on $n$ vertices. In this context, a long line of research \cite{KendallSmith1940, Colombo1964, BeinekeHarary1965, Savchenko2016, KomarovMackey2017, Grzesik2023} has recently culminated in a result of Fang and Huang \cite{FangHuang2026} establishing that an asymptotic maximizer is $T_n$ whenever $k$ is divisible by $4$, and is otherwise the random tournament, confirming a conjecture of Bartley and Day \cite{Bartley2018, Day2017}.

An alternate lens to view the question of determining whether $C(n)$ is close to the value $(n-1)!/2^n$ obtained by the random tournament is that of Sidorenko-type inequalities for directed graphs. Roughly speaking, a directed graph $H$ is \textit{tournament Sidorenko} if the random tournament minimizes the occurrences of $H$ among tournaments on a fixed number of vertices, and $H$ is \textit{tournament anti-Sidorenko} if the random tournament instead maximizes this quantity. This generalization of the usual Sidorenko setting from undirected graphs to tournaments is due to Fox, Himwich, Mani, and Zhou \cite{FoxHimwichManiZhou2025, FoxHimwichManiZhou2026}, and has been studied for various families of directed graphs \cite{ZhaoZhou2020, Grzesik2023, SahSawhneyZhao2023, HeManiNieTungWei2025, ChenClemenNoel2026}. Showing that $C(n) = \Theta((n-1)!/2^n)$ and $P(n) = \Theta(n!/2^{n-1})$ would thus establish a weak form of the tournament anti-Sidorenko property for Hamiltonian cycles and paths, respectively.

In this paper, we address the problem of closing the polynomial gap between the upper and lower bounds for $C(n)$ and $P(n)$. Interest in this question can be traced back to Alon \cite{Alon1990}, where it was deemed plausible that $C(T_n) = (1+o(1))C(n)$, and this conjectural picture was further supported in the aforementioned work of Wormald. The following is the main result of this paper, which confirms these predictions.

\begin{theorem}\label{thm:main}
    As $n\to\infty$, the maximum number of Hamiltonian cycles in a tournament on $n$ vertices is
    \[
        C(n) = \left(\frac{\cosh 1}{\cos 1}\pm o(1)\right)\frac{(n-1)!}{2^n}.
    \]
\end{theorem}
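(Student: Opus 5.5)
The proof has two halves: a lower bound, obtained by computing $C(T_n)$ asymptotically for odd $n$ (with a small modification for even $n$), and a matching upper bound valid for every tournament.

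\emph{Lower bound.} I would write the number of Hamiltonian cycles of $T$ by inclusion–exclusion, or equivalently through a transfer-operator expansion. Put $A = J/2 + S/2$ with $S$ the skew-symmetric $\pm1$ tournament matrix, so $A_{ij}=1/2+S_{ij}/2$ off the diagonal. Expanding the product over the edges of a cyclic arrangement gives
\[
C(T)=\frac{1}{n}\sum_{\sigma}\prod_{i}\frac{1+S_{\sigma(i)\sigma(i+1)}}{2}
=\frac{(n-1)!}{2^n}\,\E_\sigma\Big[\prod_i\big(1+S_{\sigma(i)\sigma(i+1)}\big)\Big].
\]
Expanding the product yields a sum over sets of cycle edges. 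Such a set decomposes into disjoint path segments, and each segment contributes a product of $S$-entries along a directed walk.

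For $T_n$ the entries are $S_{ij}=\operatorname{sgn}(\sin(2\pi(j-i)/n))$, a circulant. In the scaling limit, a path of $k$ consecutive $S$-entries along random vertices becomes an iterated integral of the kernel $K(x,y)=\operatorname{sgn}\sin(2\pi(y-x))$ on the circle. The sum over segment structures then becomes an exponential generating function in which a path of length $k$ carries weight $\langle \one, K^k\one\rangle$-type terms. These must be computed from the spectrum of $K$: its eigenvalues on $e^{2\pi i m x}$ are $\lambda_m = 2/(\pi i m)$ for odd $m$ and $0$ otherwise. Cycles rather than paths give traces $\tr K^k$, and the combination of path and cycle contributions should produce
\[
\prod_{m\text{ odd}}\frac{1+4/(\pi^2m^2)}{1-4/(\pi^2m^2)}=\frac{\cosh 1}{\cos 1},
\]
by the product formulas for $\cosh$ and $\cos$. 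The target constant strongly suggests exactly this Fredholm-determinant form, $\det(I+K^*K\text{-like})/\det(I-\dots)$.

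I would make this rigorous by dominated convergence. I would bound the contribution of configurations with many segments using $|S_{ij}|\le1$ and the decay of $\lambda_m$, and justify replacing the discrete circulant sums by integrals with $o(1)$ error.

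\emph{Upper bound.} The same expansion holds for an arbitrary $T$, so $C(T)2^n/(n-1)!=\E_\sigma\prod(1+S_{\sigma(i)\sigma(i+1)})$. After the segment decomposition this becomes, up to $o(1)$, a functional of the normalized skew-symmetric matrix $S/n$. The natural form is
\[
\frac{\det(I+S/n)}{\det(I-S^2/n^2)^{1/2}}
\]
or a similar determinantal expression in the eigenvalues $\pm i\mu_j$ of $S/n$. I would then maximize this over skew-symmetric $\pm1$ matrices. The constraints I expect to use are $\sum_j \mu_j^2 \le 1$ (since $\|S\|_F^2\le n^2$), together with a sharper bound for the top eigenvalues. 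The extremal spectrum must be $\mu_m = 2/(\pi m)$ for odd $m$, which is exactly that of $T_n$.

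The main obstacle is this extremal spectral step. The Frobenius bound alone does not force the $T_n$ spectrum, so I need inequalities on partial sums, of the form $\sum_{j\le k}\mu_j \le$ (value for $T_n$). These are majorization-type bounds for $\pm1$ skew matrices, provable via Ky Fan's principle by testing $S$ against rank-$k$ projections and optimizing over unit vectors with $|x_i|$ entries, a continuous relaxation whose optimizer is the sign-of-sine kernel. Given such majorization, Schur-convexity of $\sum\log\frac{1+\mu^2}{1-\mu^2}$ in the appropriate variables finishes the upper bound.

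Two further points need care. First, the expansion must converge uniformly for all $T$, not only for circulants, with the error of terms outside the determinant controlled by $\sum\mu_j^4$-type quantities. Second, since $\mu_1$ can be close to $1/2$ or larger, I must make sure the denominator stays bounded away from $0$. I would handle this with the bound $\mu_1\le 2/\pi+o(1)$ (Ky Fan with $k=1$), which gives $1-\mu_1^2\ge 1-4/\pi^2>0$.
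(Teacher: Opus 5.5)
Your plan has genuine gaps in both of its load-bearing steps.

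\textbf{The asymptotic formula is never derived.} Everything rests on $C(T)\approx Z(K)\,(n-1)!/2^n$ with $Z(K)=\det(I-K)/\det(I-iK)$, but your proposal only guesses this form. The guess as written, $\det(I+S/n)/\det(I-S^2/n^2)^{1/2}$, is identically $1$: for $K=S/n$ with eigenvalues $\pm i\mu_j$, the numerator and the denominator both equal $\prod_j(1+\mu_j^2)$. The denominator should be $\det(I+K^2)^{1/2}=\prod_j(1-\mu_j^2)$.

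The mechanism you sketch does not supply the formula. For $T_n$ one has $K\one=0$, so every path term $\langle\one,K^k\one\rangle$ vanishes. The entire constant therefore comes from corrections forced by the distinctness of the vertices of $\sigma$, and controlling these is the whole difficulty. Bounding configurations via $|S_{ij}|\le 1$ gives a total of $2^n$ for a quantity of order $1$, so there is no dominating function.

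The paper obtains the formula by exact algebra instead. Liu's identity $N\cyc D=\sum_{U\neq\emptyset}|U|(-1)^{|U|-1}\det D[U]\per D[U^c]$ produces $\det(I-B)$. This is combined with McCullagh's asymptotic $N^N\per D/N!=(1+o(1))\det(I-B^TB)^{-1/2}$ for doubly stochastic $D=J_N+B$, proved via a Gaussian representation of the rook polynomial. Sinkhorn scaling passes from $2A/N$ to a doubly stochastic matrix.

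For the upper bound you also need an argument for tournaments far from regular, where $K\one$ is large. The paper uses Bregman's bound to show that $C(T)\ge(n-1)!/2^n$ forces near-regularity outside a set $S$ of $O(\log^4 n)$ vertices. It then encodes the passages of a cycle through $S$ as sparse row and column modifications. Finally it shows the scaling cost can only lower the count, via $\capacity(Q)\le\exp(\sum_{i,j}Q_{ij}-N)$. Your circle-kernel computation does identify the constant for $T_n$ correctly, but only conditionally on the formula.

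\textbf{The proposed majorization is false.} The partial sums $\sum_{j\le k}\mu_j$ are not maximized, for all $k$, by the transitive tournament (equivalently $T_N$ or $R_N$, which share its Seidel spectrum).

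Take $N=4$ and the tournament consisting of a source dominating a directed triangle. Its Seidel Pfaffian is $3$, so its singular values are $\sqrt3$ four times. The transitive tournament has $1+\sqrt2,\,1+\sqrt2,\,\sqrt2-1,\,\sqrt2-1$, so the trace norms are $4\sqrt3>4\sqrt2$. For large $N$ the failure is drastic: a typical random tournament has trace norm of order $N^{3/2}$, against $O(N\log N)$ for the transitive one. So Tomi\'c--Weyl and Schur-convexity cannot be invoked.

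What is true, and suffices, is the comparison of even moments $\tr|C|^{2m}\le\tr|U_N|^{2m}$ for every $m\ge1$. This is enough because $\log Z(K)=\sum_{s\ge0}\tr|K|^{4s+2}/(2s+1)$ has positive coefficients. The paper derives the moment comparison from Fang and Huang's vector majorization $(C^mx)^\downarrow\preceq_w M^mx^\downarrow$, applied to the basis vectors $e_j$.

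Note that the $s=0$ term $\tr|K|^2=(N-1)/N$ is the same for every tournament. The whole comparison therefore lives in the higher moments, and a majorization of the entire spectrum is neither needed nor true.
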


Theorem \ref{thm:main} follows immediately from Theorems \ref{thm:upper} and \ref{thm:lower}, which establish an upper bound and lower bound on $C(n)$, respectively.

\begin{theorem}\label{thm:upper}
    As $n\to\infty$, every tournament $T$ of order $n$ satisfies
    \[
        C(T)\le\left(\frac{\cosh 1}{\cos 1}+o(1)\right)\frac{(n-1)!}{2^n}.
    \]
\end{theorem}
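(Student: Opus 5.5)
We would prove the upper bound by writing $C(T)$ as a Fourier-type expansion around the random tournament. Write the adjacency matrix as $A = \tfrac12(J-I) + \tfrac12 S$, where $S$ is the skew-symmetric $\pm1$ matrix with $S_{ij}=1$ if $i\to j$ and $S_{ij}=-1$ otherwise. For a cyclic ordering $\sigma$, the indicator that $\sigma$ is a Hamiltonian cycle is $2^{-n}\prod_{i}(1+S_{i\sigma(i)})$. Expanding and summing over all $(n-1)!$ cyclic orders gives
\[
C(T)=\frac{(n-1)!}{2^n}\sum_{F}\E_\sigma\Bigl[\prod_{(i,j)\in F}S_{ij}\,\one_{F\subseteq\sigma}\Bigr]\cdot\frac{1}{\Pr[F\subseteq \sigma]}\Pr[F\subseteq\sigma],
\]
where $F$ ranges over linear forests of directed edges. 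Grouping by the number $k$ of edges of $F$, a linear forest with $k$ edges lies in a uniformly random Hamiltonian cycle with probability $\approx n^{-k}$. Hence the $k$-th term is $n^{-k}$ times the sum of $\prod S$ over directed path systems with $k$ edges. For a fixed path shape this sum is a walk sum $\one^\top S^{\ell}\one$, up to lower-order corrections from vertex coincidences.

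The first step is to establish, with uniform error control, that
\[
C(T)\,\frac{2^n}{(n-1)!}=\sum_{\text{path multisets}}\ \prod_{\text{paths}}\frac{\one^\top S^{\ell}\one}{n^{\ell+1}}\cdot(\text{combinatorial weights})+o(1).
\]
After exponentiating over path components, this takes the form
\[
\exp\Bigl(\sum_{\ell\ge1} c_\ell\, \frac{\one^\top S^{\ell}\one}{n^{\ell+1}}\Bigr)(1+o(1)).
\]
Since $S$ is skew-symmetric, $\one^\top S^{\ell}\one=0$ for odd $\ell$. For even $\ell=2m$ we have $\one^\top S^{2m}\one=(-1)^m\|S^m\one\|^2$. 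The second step is the key observation: the surviving quantity is a function of the spectrum of the Hermitian matrix $iS$ weighted by the projection of $\one$. Writing $\one=\sum_j a_j v_j$ with eigenvalues $\lambda_j\in\R$ of $iS/n$, where $\sum a_j^2/n=1$ and $|\lambda_j|\lesssim 1$, the generating sum becomes $\sum_j \frac{a_j^2}{n} g(\lambda_j)$ for an explicit analytic $g$. The constraint $\sum \lambda_j^2\le \tr(S^\top S)/n^2\approx 1$ bounds the spectral mass.

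The third step is to maximize the resulting functional over spectral measures satisfying these constraints. For $T_n$, the matrix $S$ is circulant with eigenvalues $\approx \frac{2in}{\pi k}$ for odd $k$, and $\one$ is the zero eigenvector, so the extremal structure is not simply "all mass on one eigenvalue." We therefore expect the correct identity to involve the whole trace structure of $S$ rather than only $\one$. Concretely, the cycle-cover contributions $\tr(S^{\ell})/n^{\ell}$ must also enter, because $F$ can close up long cycles in the complement. We would redo the expansion keeping both path terms and $\tr$ terms, and aim for a formula of the shape $\prod_j h(\lambda_j)$ times a path factor. The target is that the maximum of $\prod_j h(\lambda_j)$ subject to $\sum\lambda_j^2\le1$ (plus the Hilbert-type constraint that the $\lambda_j$ come from a $\pm1$ skew matrix) equals $\cosh 1/\cos 1$, attained by the $T_n$ spectrum $\lambda=\pm 2/(\pi k)$. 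This product matches $\prod_{k\text{ odd}}\frac{1+4/(\pi^2k^2)}{1-4/(\pi^2k^2)}=\cosh1/\cos1$, which is strong evidence for this route.

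The main obstacle is the rigorous truncation. We must show that contributions from large $k$, meaning many edges or long paths, and from vertex-coincidence corrections are $o(1)$ uniformly over all tournaments, with no spectral assumption. We would first try to bound the tail with permanent-type estimates, as in Alon and Friedgut–Kahn, applied to $|S|$-weighted path systems. This would be combined with the operator norm bound $\|S\|\le n$, giving geometric decay of the $k$-th term in $k$, plus a separate counting argument for coincidences. The second difficulty is the extremal problem itself: proving that the $T_n$ spectrum maximizes $\prod h(\lambda_j)$. Here we would try a convexity argument in the variables $\lambda_j^2$ together with a majorization-type constraint on the spectra of $\pm1$ skew matrices, controlling $\sum_j|\lambda_j|$.
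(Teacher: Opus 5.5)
What you have written is a heuristic program rather than a proof, and the two steps you defer, uniform truncation and the extremal inequality, are essentially the whole theorem. Your Step~3 target is the right one. The paper's main term is $Z(K)=\det(I-K)/\det(I-iK)=\prod_j\frac{1+\lambda_j^2}{1-\lambda_j^2}$ with $K=S/n$, and your product over odd $k$ is its limiting value on $T_n$. But the Step~1 identity you propose to establish is false. For every regular tournament, in particular $T_n$, we have $S\one=0$, so all walk sums $\one^\top S^\ell\one$ vanish and your path factor is identically $1$, whereas $C(T_n)2^n/(n-1)!\to\cosh1/\cos1$. The ``vertex-coincidence corrections'' are therefore not lower order; they carry all of $Z(K)$.

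More fundamentally, nothing in your plan controls the cancellation. If you replace each $S_{ij}$ by $|S_{ij}|=1$, the terms of your expansion sum to exactly $2^n$, while the signed sum is $O(1)$. Consequently:
\begin{itemize}
\item tail bounds for $|S|$-weighted path systems are hopeless;
\item Bregman/Alon/Friedgut--Kahn-type permanent bounds lose polynomial factors in $n$ and cannot give a $1+o(1)$ error;
\item $\norm{S}_\op\le n$ yields no geometric decay at all; you need at least Pick's bound $\norm{S}_\op\le\cot\frac{\pi}{2n}\approx 2n/\pi$.
\end{itemize}
The paper handles this differently. It writes $C(T)$ as $\cyc$ of a scaled adjacency matrix and expresses $\cyc$ through determinants and permanents of principal submatrices via Liu's identity. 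It then evaluates the permanents with McCullagh's asymptotic $N^N\per(J_N+B)/N!=(1+o(1))\det(I-B^TB)^{-1/2}$ for doubly stochastic $J_N+B$. That asymptotic is proved via a Gaussian representation of the rook polynomial, in which the factor $(1+w)e^{-w+w^2/2}$ does the resummation. Combined with Sinkhorn scaling, this produces $\det(I-B)/\sqrt{\det(I-B^TB)}$ and hence $Z(K)$.

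Two further ideas are missing. First, a perturbative expansion around $\tfrac12(J-I)$ cannot be made uniform over all tournaments. When $T$ is far from regular, the degree-imbalance terms are large and $2A/n$ is far from doubly stochastic. The paper first uses Bregman's bound to show that $C(T)\ge(n-1)!/2^n$ forces $\sum_v\bigl(d^+(v)-\frac{n-1}{2}\bigr)^2=O(n^2\log n)$. It then deletes a set of $O(\log^4n)$ vertices to leave a nearly regular $G$. Each cycle's passages through the deleted set are encoded as sparse row/column replacements, with each edge inside the set costing a factor $O(1/N)$. The remaining irregularity is paid for via $\capacity(Q)\le\exp\bigl(\sum_{i,j}Q_{ij}-N\bigr)$.

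Second, the constraints you name do not determine the extremum. Maximize $\prod_j\frac{1+\lambda_j^2}{1-\lambda_j^2}$ subject to the Frobenius identity $\sum_j\lambda_j^2=\frac12(1-\frac1n)$ together with Pick's bound $\lambda_j\le 2/\pi$. By convexity in $\lambda_j^2$, the maximum tends to $\frac{(\pi^2+4)(3\pi^2-8)}{(\pi^2-4)(\pi^2+8)}\approx2.8574$, which exceeds $\cosh1/\cos1\approx2.8560$. A bound on $\sum_j|\lambda_j|$ is not the relevant constraint either. What is needed is $\tr|C|^{2m}\le\tr|U_N|^{2m}$ for every $m\ge1$, where $U_N$ is the transitive Seidel matrix. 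The paper derives this from the Fang--Huang weak majorization $(C^mx)^\downarrow\preceq_w M^mx^\downarrow$. Since $\log Z(K)=\sum_{s\ge0}\tr|K|^{4s+2}/(2s+1)$ has positive coefficients, this gives $Z(K)\le Z(U_N/N)\to\cosh1/\cos1$.
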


For odd $n$, let $R_n = T_n$, and for even $n$, let $R_n$ be the tournament obtained by deleting an arbitrary vertex from $T_{n+1}$.

\begin{theorem}\label{thm:lower}
    As $n\to\infty$,
    \[
        C(R_n)\ge\left(\frac{\cosh 1}{\cos 1}-o(1)\right)\frac{(n-1)!}{2^n}.
    \]
\end{theorem}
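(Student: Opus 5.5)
We will compute $C(T_n)$ asymptotically for odd $n$ and then handle even $n$ by deleting a vertex from $T_{n+1}$. The starting point is inclusion--exclusion over the complementary (backward) edges. Write the adjacency matrix of $T_n$ as $A=\tfrac12(J-I)+\tfrac12 S$, where $S$ is the skew-symmetric circulant with $S_{ij}=1$ if $j-i\in\{1,\dots,\frac{n-1}2\}$ and $-1$ otherwise (off the diagonal). Expanding the product $\prod_{k}A_{\sigma(k)\sigma(k+1)}$ over a Hamiltonian cycle $\sigma$ gives
\[
C(T_n)=\frac{1}{2^n}\sum_{\sigma}\prod_{k}\bigl(1+S_{\sigma(k)\sigma(k+1)}\bigr)=\frac{1}{2^n}\sum_{F}\sum_{\sigma\supseteq F}\prod_{e\in F}S_e ,
\]
where $F$ ranges over subsets of edges of the cycle. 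Grouping by the edge set $F$, which is a linear forest (a disjoint union of directed paths) when $|F|<n$, turns the count into a sum over path systems. If $F$ has $m=n-|F|$ components, the number of cyclic completions is $(m-1)!$, and each component contributes a signed path weight. This identifies $C(T_n)2^n/(n-1)!$ with $\sum_{m}\frac{(m-1)!}{(n-1)!}\,W_m$, where $W_m$ is the sum over ordered path decompositions into $m$ paths of the products of $S$-weights along them.

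The key step is to evaluate the path weights of $S$ asymptotically. Since $S$ is circulant, it is diagonalized by characters, and its entries are a discrete sign function $\operatorname{sgn}(\sin(2\pi (j-i)/n))$-type kernel. In the scaling limit, a path of length $\ell$ in $S$ behaves like an iterated integral of the kernel $\kappa(x,y)=\operatorname{sgn}$ of the circular difference on $\R/\Z$. Its Fourier coefficients are $\hat\kappa(k)=\frac{2}{\pi i k}$ for odd $k$ and $0$ for even $k$. The relevant generating quantity is then a Fredholm-type determinant or resolvent, $\det(I-t\mathcal K)^{\pm1}$ or $\operatorname{tr}(\mathcal K^\ell)$, whose eigenvalues are $\pm\frac{2}{\pi k}\,i$ with $k$ odd. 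Products of the form $\prod_{k\text{ odd}}(1\pm\frac{4t^2}{\pi^2k^2})$ equal $\cos t$ and $\cosh t$. At $t=1$, the limit $\frac{\cosh 1}{\cos 1}$ should emerge as a ratio of such products. Concretely, I would show that path-decomposition sums with many short paths dominate. Paths of length $\ell$ contribute weight roughly $n^{\ell}\operatorname{tr}$-like terms normalized by $n^{-\ell}$ from the factorial ratio $(m-1)!/(n-1)!\approx n^{-(n-m)}$. Summing over the exponential formula then gives $\exp\bigl(\sum_\ell c_\ell\bigr)$, with $c_\ell$ computed from the eigenvalues $\pm 2i/(\pi k)$; $c_\ell$ vanishes for odd $\ell$ by skew-symmetry. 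This sum should evaluate to $\log\cosh 1-\log\cos 1$.

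The main obstacle is the error control. The expansion is an alternating sum with huge cancellation, and we need the contribution of long paths, and of configurations far from the "independent short paths" regime, to be $o(1)$ after normalization. The discrete eigenvalues of $S$ are $\lambda_k=-i\cot(\pi k/(2n))$-type for odd $k$, which are about $\frac{2n}{\pi k}$. Hence $\frac{1}{n}\lambda_k$ converges to $\frac{2}{\pi k}$ and is uniformly bounded by $1$ in modulus. I would first try to write the whole sum exactly via a transfer or permanent-type identity in terms of these eigenvalues. Then I would bound tails using $|\lambda_k|/n<2/(\pi k)\cdot(1+o(1))$ and dominated convergence over $k$. I expect this uniform tail bound, possibly requiring a truncation at path length $\ell\le L$ together with a crude bound on the remainder, to be the hardest part.

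Finally, for even $n$, deleting a vertex $v$ from $T_{n+1}$ leaves $C(R_n)$ equal to the number of Hamiltonian paths of $T_{n+1}-v$ closed by a forward edge. The same expansion applies with the $S$-kernel restricted to $n$ points, and the circular scaling limit is unchanged up to a measure-zero point. The identical computation therefore yields the same limit, with errors $o(1)$.
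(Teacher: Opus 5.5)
Your expansion is a correct starting point, apart from silently dropping the $|F|=n$ term $\cyc(S)/(n-1)!$. Your spectral evaluation of the constant is also a legitimate alternative to Lemma~\ref{lem:9.1} for odd $n$. The nonzero eigenvalues of $K=S/n$ are $\pm\frac{i}{n}\cot\frac{\pi j}{2n}$ for odd $j\le n-2$, each dominated by $\frac{2}{\pi j}$, so the product formulas you cite give $\det(I-K)\to\cosh 1$ and $\det(I-iK)\to\cos 1$.

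The gap is that the actual content of the theorem, $2^nC(T_n)/(n-1)!=(1+o(1))\det(I-K)/\det(I-iK)$, is never established. The step ``summing over the exponential formula gives $\exp(\sum_\ell c_\ell)$'' is only asserted, and the mechanism you describe is not what happens. Because $S\one=S^T\one=0$, every unconstrained path sum $\one^TS^\ell\one$ vanishes. So all nonzero contributions come from two sources:
\begin{enumerate}
\item the self-avoidance and vertex-disjointness constraints (for instance, the two-edge forests contribute $1+o(1)$, entirely from the walks $i\to j\to i$ that self-avoidance excludes);
\item the weight $\frac{(m-1)!\,n^{k}}{(n-1)!}=\prod_{i=1}^{k}\frac{n}{n-i}$ with $k=|F|$, which is not close to $1$ once $k\gtrsim\sqrt n$.
\end{enumerate}
Worse, the terms with $|F|=k$ have total absolute value exactly $\binom nk$, since each of the $(n-1)!$ cycles carries $\prod_e(1+|S_e|)=2^n$. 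Your sum therefore has absolute mass $2^n$ against a signed total of order $1$. No truncation in path length followed by a crude tail bound can survive this, because the difficulty lies in the total number of edges, not in the length of individual paths. You need an exact identity that exhibits the cancellation. Your sketch also gives no account of why the answer is a ratio of two determinants, one of them evaluated at $iK$. The even case is likewise unargued: ``unchanged up to a measure-zero point'' does not control the effect of $S\one\ne0$ on a sum of this size.

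The paper obtains the cancellation from two exact representations.
\begin{enumerate}
\item Liu's identity (Lemma~\ref{lem:inclusion-exclusion}) gives $N\cyc D=\sum_{U\ne\emptyset}|U|(-1)^{|U|-1}\det D[U]\per D[U^c]$. For doubly stochastic $D=J_N+B$:
\begin{itemize}
\item the minors alone sum to exactly $\det(I-B)$ (Lemma~\ref{lem:det(I-B)});
\item the ratios $\per D[U^c]/\per D$ are $1+o(1)$ for $|U|\le\log^2N$, by Sinkhorn scaling plus the permanent estimate;
\item larger $U$ are negligible by Hadamard, Bregman and van der Waerden.
\end{itemize}
\item McCullagh's estimate $N^N\per D/N!=(1+o(1))\det(I-B^TB)^{-1/2}$ (Proposition~\ref{prop:permanent bound}) handles the permanent. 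Its proof writes the factorial weights as $\E[(N/T)^k]$ with $T\sim\Gamma(N+1,1)$; the same device with $T\sim\Gamma(n,1)$ would linearize your weights. It writes the rook polynomial $R_B$ as a complex Gaussian expectation, and $B\one=B^T\one=0$ together with AM--GM controls the tails.
\end{enumerate}
Thus $\det(I-K)$ comes from small cycle covers, and $1/\det(I-iK)=\det(I-K^TK)^{-1/2}$ (Lemma~\ref{lem:skew-sym det}) comes from the permanent. Finally, rescaling $2A/n$ to a doubly stochastic matrix (trivially for odd $n$, via Proposition~\ref{prop:sinkhorn} for even $n$) costs a factor $e^{-s}$ with $s=1+O(1/n)$. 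This cancels the $e$ in $\det(I-B)=(1+o(1))e\det(I-K)$, which comes from $\tr B=-1$.
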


By the above inequality relating $P(n)$ to $C(n)$, Theorem \ref{thm:main} immediately determines $P(n)$ up to a multiplicative constant, $P(n) = \Theta(n!/2^{n-1})$. A short analysis of the Hamiltonian paths in $R_n$ will allow us to extend the lower bound for $C(R_n)$ to match the upper bound given by $P(n) \le 4C(n+1)$. In particular, this determines the asymptotic constant to be $\cosh(1)/\cos(1)$ as well:

\begin{theorem}\label{thm:Hamiltonian paths}
    As $n \to \infty$, the maximum number of Hamiltonian paths in a tournament on $n$ vertices is
    \[
        P(n) = \left(\frac{\cosh1}{\cos1}\pm o(1)\right)\frac{n!}{2^{n-1}}.
    \]
\end{theorem}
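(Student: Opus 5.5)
The plan has two halves: an upper bound read off from Theorem \ref{thm:upper}, and a lower bound proved directly on $R_n$.

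\textbf{Upper bound.} We use $P(n)\le 4C(n+1)$, which is the first of the two inequalities displayed in the introduction applied with $n+1$ in place of $n$. Combining it with Theorem \ref{thm:upper} at order $n+1$ gives
\[
P(n)\le 4\left(\frac{\cosh 1}{\cos 1}+o(1)\right)\frac{n!}{2^{n+1}}=\left(\frac{\cosh 1}{\cos 1}+o(1)\right)\frac{n!}{2^{n-1}}.
\]

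\textbf{Lower bound.} We need to show that $R_n$ has about $4$ times as many Hamiltonian paths as $C(R_{n+1})$ predicts. We will relate paths in $R_n$ to Hamiltonian cycles, taking as input the asymptotic count of Theorem \ref{thm:lower} and, more usefully, its proof. Every Hamiltonian path $v_1\to\cdots\to v_n$ either closes into a Hamiltonian cycle via the edge $v_n\to v_1$, or the edge $v_1\to v_n$ is present instead. For a path whose endpoints are "typical", we expect the closing edge to be present with probability $1/2$, which gives paths $\approx 2n\,C(R_n)/1$... this heuristic is off by a factor of $2$. The cleaner route is to insert a vertex, which we carry out in three steps.

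\begin{enumerate}
\item Take the regular tournament $T_{m}$ with $m\in\{n+1,n+2\}$ odd. Deleting a vertex $x$ from a Hamiltonian cycle of $T_m$ yields a Hamiltonian path in $T_m-x$ that starts in $N^+(x)$ and ends in $N^-(x)$. Conversely, each Hamiltonian path of $T_m - x\cong R_{m-1}$ from $N^+(x)$ to $N^-(x)$ extends uniquely to a cycle through $x$. Hence
\[
C(T_m)=\#\{\text{Ham.\ paths of } T_m-x \text{ from } N^+(x)\text{ to }N^-(x)\}.
\]
\item Show that, among Hamiltonian paths of $T_m-x$, the fraction with start in $N^+(x)$ and end in $N^-(x)$ is $1/4+o(1)$. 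That is, the endpoints of a uniformly random Hamiltonian path are asymptotically independent of the position of $x$ and equidistributed between the two halves. By rotational symmetry of $T_m$, it suffices to show that the joint law of (start, end) of a random Hamiltonian path in $R_{m-1}$ is asymptotically insensitive to which half each endpoint lies in. Then $P(R_{m-1})\ge (4-o(1))C(T_m)$, and Theorem \ref{thm:lower} gives the right constant for $n=m-1$ even.
\item For odd $n$, pass from $R_{n+1}$ to $R_n$ by deleting one further vertex, using the same kind of argument. Alternatively, use the monotonicity $P(n)\ge P(n+1)\cdot(1-o(1))\cdot 2/(n+1)$. That bound holds because deleting the endpoint of a path gives a path, and each path of $R_n$ extends in about $(n+1)/2$ ways when a vertex of the regular tournament is added back, matching the factor $n!/2^{n-1}$ scaling.
\end{enumerate}

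\textbf{Main obstacle.} The hard step is step 2, the equidistribution of path endpoints. I would try to get it from the transfer-operator or generating-function analysis used to prove Theorem \ref{thm:lower}, whose eigenvalue structure yields the $\cosh 1/\cos 1$ constant. Concretely, I would compute the count of paths with prescribed endpoint positions as a function of the normalized circular gap $t=(j-i)/m\in(0,1)$. I would then show that this density has the form $f(t)\,\frac{(m-2)!}{2^{m-1}}$ with $f$ continuous, and integrate $f$ over the relevant regions. If $f$ turns out not to be constant, the factor $1/4$ has to be checked against the actual integrals. The consistency condition is that the integral of $f$ over the region where $i\in N^+(x)$ and $j\in N^-(x)$ equals $1/4$ of its total mass. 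By the symmetry $t\mapsto 1-t$ (path reversal together with the reflection automorphism of $T_m$), I would reduce that condition to a clean identity.
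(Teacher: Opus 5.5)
Your upper bound is correct and is exactly the paper's argument. The lower bound, however, has a genuine gap.

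Step~2 is a restatement rather than a reduction. By your Step~1, the paths in question are in bijection with the Hamiltonian cycles of $T_m$. So the claimed fraction $1/4+o(1)$ is literally the statement $P(R_{m-1})=(4+o(1))C(T_m)$. Given Theorems~\ref{thm:upper} and~\ref{thm:lower}, that is equivalent to the even-$n$ lower bound you set out to prove, and you give no way to establish it.

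The machinery you plan to borrow is not there. The proof of Theorem~\ref{thm:lower} uses no transfer operator or endpoint generating function. It uses only McCullagh's permanent asymptotics, Liu's identity and Sinkhorn scaling, which produce the total cyclic sum of a nearly doubly stochastic matrix and give no endpoint density $f(t)$. You also concede that if $f$ is not constant you do not know whether the relevant integral equals $1/4$.

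Step~3 has the same defect. Deleting an endpoint $w$ only yields $P(T'-w)\ge\frac12\#\{\text{paths of }T'\text{ with an endpoint at }w\}$, because a path of $T'-w$ may extend at both ends. Averaging over $w$ gives $P(n)\ge P(n+1)/(n+1)$, a factor $2$ short. Recovering the $2$ needs each end to extend with probability $\tfrac12$, which is another unproven equidistribution statement. The claim that each path ``extends in about $(n+1)/2$ ways'' is not a valid substitute. Deleting an interior occurrence of $w$ from a path of $T'$ need not leave a path, so insertions into paths of $T'-w$ do not account for all paths of $T'$.

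The missing idea is hidden in the heuristic you discarded, which is not off by a factor of $2$. Since $2n\cdot\frac{(n-1)!}{2^n}=\frac{n!}{2^{n-1}}$, the relation $P(R_n)=(2n+o(n))C(R_n)$ is exactly what is needed, and it is what the paper proves.

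To make it rigorous, attach a new vertex joined in both directions to all $n$ vertices with weight $1/n$, and scale the Sinkhorn-normalized matrix $D$ of $R_n$ by $1-1/n$. The resulting $(n+1)\times(n+1)$ matrix $\widehat D$ is doubly stochastic and satisfies $\cyc\widehat D=n^{-2}(1-1/n)^{n-1}\mathrm{path}(D)$. Because the new vertex attaches uniformly, only the total number of paths enters; no endpoint information is needed.

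Proposition~\ref{prop:cyc bound} applies to both $D$ and $\widehat D$. Their centered parts differ by $O(1/n)$ in Frobenius norm after padding, and both have trace $-1$, so the determinant factors agree up to $1+o(1)$. Comparing the two asymptotics gives $\mathrm{path}(D)=(1+o(1))n^2\cyc D$. Undoing the Sinkhorn scaling costs only a factor $e^{-x_{v_n}-y_{v_1}}=1+o(1)$ per path.

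This treats both parities of $n$ at once, so your Step~3 becomes unnecessary. The same comparison, applied to the doubly stochastic matrix of $T_m$ and the uniformly augmented matrix of $T_m-x$ (whose centered parts differ by $o(1)$ in Frobenius norm), would also prove your Step~2. But that is this idea, not an endpoint-density computation.
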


\subsection{Proof Outline.} We give a high level overview of the proof, beginning with the upper bound, Theorem \ref{thm:upper}. Let $T$ be a tournament on $n$ vertices. We may assume that $C(T) \ge (n-1)!/2^n$, else there is nothing to show. The fact that $T$ contains at least as many Hamiltonian cycles as the random tournament gives structural information about $T$. In particular, using Bregman's theorem, Proposition \ref{prop:polylog set} shows that $C(T)\ge(n-1)!/2^n$ implies that the total squared difference between the outdegrees and indegrees of the vertices of $T$ is small. Thus, we may remove a small set $S$ of only polylogarithmically many vertices so that every vertex in the remaining tournament has small remaining degree imbalance, akin to a regular tournament. Setting $G = T - S$ and $N = |V(G)|$, this means that the adjacency matrix of $G$ normalized by $2/N$ has row sums and column sums close to one.

At this point, it would be nice to say something like: we can think of $G$ as behaving similarly to a regular tournament, and its normalized adjacency matrix is nearly doubly stochastic; and so we can hope to use some tools from linear algebra in order to count the Hamiltonian cycles inside this well-behaved tournament. But these are Hamiltonian cycles of $G$ rather than $T$, which neglects the fact that Hamiltonian cycles of $T$ still use all the vertices in $S$.

To address this concern, our strategy will be to partition the set of Hamiltonian cycles of $T$ based on the ``pattern'' they form in $S$. Consider a Hamiltonian cycle $C$ of $T$. The edges of $C$ with both endpoints in $S$ form a directed linear forest on $S$, with components consisting of maximal directed paths with vertices in $S\cap V(C)$. Such a component $v_1\to v_2\to\dots\to v_\ell$ appears in $C$ as part of a directed path $x\to v_1\to\dots\to v_\ell\to y$ for some attachment endpoints $x,y\in V(G)\cap V(C)$. For the purpose of counting this cycle, we will replace this entire path by a weighted transition in a modified adjacency matrix of $G$.

More specifically, after fixing a directed linear forest in $S$, as well as one attachment endpoint (in $G$) for each of its components, the possible choices of the other attachment endpoints for each component are encoded by replacing a small number of rows and columns of the normalized adjacency matrix of $G$ by uniform probability vectors, whose support consists of the appropriate destination vertices. This produces an auxiliary matrix $Q$ whose rows and columns are indexed by $V(G)$, such that the weighted sum over the cyclic permutations of $Q$,
\[
    (\text{appropriate weights})\times\sum_{\substack{\sigma\in S_N\\\sigma\text{ cyclic}}}\prod_{i=1}^NQ_{i,\sigma(i)},
\]
will upper-bound the number of Hamiltonian cycles of $T$ which induce this pattern in $S$. Proposition \ref{prop:insertions} organizes these computations.

The factor on the right hand side, which we denoted by $\cyc Q$, differs from the permanent of $Q$ only in that the sum is restricted to those permutations $\sigma\in S_N$ consisting of a single cycle. Let us ignore this discrepancy for the present moment, and pretend that we would be happy to work with $\per Q$ instead. In light of the permanent methods employed by previous work on this problem, we now find ourselves in familiar territory, and would like to use some permanent estimates like Van der Waerden's or Bregman's theorem. 

The first challenge to overcome is that universal bounds like Van der Waerden's are not precise enough for our application, and so we will instead use the permanent estimate in Proposition \ref{prop:permanent bound}, which is due to McCullagh \cite{McCullagh2014}. This permanent estimate measures how far the permanent of a doubly stochastic matrix $D$ differs from the permanent of $J_N$, the uniform matrix whose entries are all $1/N$ which achieves the minimum permanent $N!/N^N$, in terms of the difference matrix $B :=D - J_N$. (It requires some assumptions on $D$, and we also need a uniformity statement, so we prove a version suited for our application in Section \ref{sec:permanent bounds}.)

Another technical caveat is that this permanent bound takes as an input a doubly stochastic matrix; and while our matrices $Q$ are close to being doubly stochastic, in general they are not actually doubly stochastic. Thankfully, there is a standard tool for this: via Sinkhorn scaling, we can find a doubly stochastic matrix $D$ such that $D = \diag(e^x)Q\diag(e^y)$. Since every weighted Hamiltonian cycle in the sum uses exactly one entry from each row and column, diagonal scaling changes all their weights by the same factor. We can therefore estimate $\cyc D$ and then account for the scaling factor separately. The relevant statement for this tool is Proposition \ref{prop:sinkhorn}. In effect, whenever we have a tournament which is very close to being regular, translating to the relevant matrix and then applying Sinkhorn scaling gives us a way to argue as if it really were. So, it will suffice to compute $\cyc D$, and then multiply by the Sinkhorn scaling factors to retrieve $\cyc Q$.

Next we reconcile the difference between $\cyc D$ and permanents. An identity of Liu \cite{Liu1991} expresses $\cyc D$ as an alternating sum of determinants and permanents of its principal submatrices (one can view this as some form of inclusion-exclusion):
\[
    \cyc D = \frac1N\sum_{\emptyset\neq U\subseteq[N]}|U|(-1)^{|U|-1}\det D[U]\per D[U^c].
\]
Combining this with the permanent estimate and Sinkhorn scaling factors, it will follow in Proposition \ref{prop:modifications} that $\cyc Q$ satisfies
\[
    \frac{N^N\cyc Q}{(N-1)!}\le (1+o(1))\underbrace{\exp\Big(1+\sum_{1\le i,j\le N}Q_{ij}-N\Big)}_{\text{Sinkhorn scaling factors}}\underbrace{\frac{\det(I-K)}{\det(I-iK)}}_{Z(K)},
\]
where $K = (A - A^T)/N$ and $A$ is the adjacency matrix of $G$. Note that the factor denoted $Z(K)$ on the right is independent of $Q$.

Now that we understand the contribution $\cyc Q$ from each pattern, it remains to sum over all possible directed forest patterns $F$ on $S$. One possibility for such a forest arises when no two vertices of the Hamiltonian cycle $C$ are consecutive in $S$. In this case, every component of $F$ is a single vertex, and for each such vertex its possible predecessors and successors partition $V(G)$ in two. This property allows us to show that the sum of the contributions from these patterns is at most $(Z(K)+o(1))\frac{(n-1)!}{2^n}$. On the other hand, if $F$ contains $k\ge 1$ edges, then the corresponding factor will be at most $(O(N^{-1}))^k\cdot Z(K)\frac{(n-1)!}{2^n}$. In fact, even after summing over all forests with at least one edge, their total contribution is still only $o((n-1)!/2^n)$. Heuristically, this is because $|S|\sim\mathrm{polylog(n)}$ is very small, and hence a Hamiltonian cycle which enters $S$ is unlikely to continue to another vertex in $S$, but rather return to $V(G)$. Consequently, we will have shown that
\[
    C(T) \le (Z(K)+o(1))\frac{(n-1)!}{2^n}.
\]

The final ingredient in the proof is a universal upper bound for the quantity $Z(K)$ over all tournaments of a fixed order $N$. Proposition \ref{prop:Z(K) bound} will show that
\[
    Z(K) \le \frac{\sum_{j=0}^{\lfloor N/2\rfloor}\binom{N}{2j}N^{-2j}}{\sum_{j=0}^{\lfloor N/2\rfloor}(-1)^j\binom{N}{2j}N^{-2j}},
\]
where the sequences in the numerator and denominator converge to $\cosh1$ and $\cos1$ as $N\to\infty$, respectively. This will be accomplished by first showing that the quantity $Z(K)$ is maximized for transitive tournaments at every order, and then simply computing its value for the transitive tournament. The former step will use a majorization argument of Fang and Huang \cite{FangHuang2026} for the comparison. Together with the inequality bounding $C(T)$ above, this completes the upper bound.

For the lower bound, we will apply the previous framework directly to the cyclic regular tournament $T_n$. Let $A$ be the adjacency matrix of $T_n$ and $K = (A-A^T)/n$. Since $T_n$ is already a regular tournament, we do not need to delete any set of exceptional vertices $S$, and this will give equality in the arguments above up to the point $C(T_n) = (Z(K) - o(1))\frac{(n-1)!}{2^n}$. Moreover, $T_n$ is closely related to the transitive tournament on $n$ vertices. Indeed, arranging the vertices of $T_n$ in the natural cyclic order and reversing the orientations of edges crossing a diametrical edge-cut transforms it into the transitive tournament. The effect of reversing these edges will not affect the value of $Z(K)$, and as the transitive tournament converges to the maximum value of $\cosh1/\cos1$, this shows
\[
    C(T_n) = \left(\frac{\cosh1}{\cos1}-o(1)\right)\frac{(n-1)!}{2^n}.
\]

When $n$ is even, we can say the same things for the tournament $R_n$ obtained by deleting a vertex from $T_{n+1}$, but will need to use a small Sinkhorn scaling as this tournament is no longer regular. Since $C(n) \ge C(R_n)$, this completes the lower bound Theorem \ref{thm:lower}.

Finally, a few short arguments will reduce the count for Hamiltonian paths $P(n)$ to the count for Hamiltonian cycles $C(n)$, which gives Theorem \ref{thm:Hamiltonian paths}. In brief, both directions will exploit the standard trick of adding a new vertex with convenient adjacencies to a tournament $T$, such that the Hamiltonian cycles in the new directed graph can be used to count the Hamiltonian paths they induce in $T$.

\subsection{Further Directions.} It would be interesting to determine the tournament which exactly obtains the maximum number of Hamiltonian cycles (or paths). While Theorem \ref{thm:lower} implies that $T_n$ obtains the maximum asymptotically, it's not true for small orders that $C(T_n) = C(n)$. For instance, when $n = 7$, the Paley tournament $Q_7$ with vertices $\mathbb Z/7\mathbb Z$ and an edge from $i$ to $j$ whenever $j-i\pmod 7\in\{1,2,4\}$ satisfies $C(Q_7) =24 > 17=C(T_7)$. On the other hand, a stronger conjecture of Wormald \cite{Wormald} asserts that $C(n) = C(T_n)$ for all sufficiently large odd $n$.

It seems plausible that some adaptation of the techniques in this paper can be used to asymptotically determine the maximum number of $1$-factors in a tournament on $n$ vertices, and we would conjecture that $T_n$ is also an asymptotic maximizer for this quantity.

\subsection{Organization of this paper.} The remainder of this paper is organized as follows. In Section \ref{sec:preliminaries} we introduce basic notation and establish some known facts which will be used in the sequel. Sections \ref{sec:permanent bounds} and \ref{sec:Sinkhorn} give proofs for the permanent estimate and a quantitative Sinkhorn scaling result. In Section \ref{sec:inclusion-exclusion} we apply these to derive an asymptotic formula for Hamiltonian cycles, and in Section \ref{sec:modifications} we extend these estimates to modified adjacency matrices. Section \ref{sec:Z bound} proves the upper bound $\cosh1/\cos1$ on the determinant ratio $Z(K)$. The main result, Theorem \ref{thm:main}, is proved in Sections \ref{sec:upper} and \ref{sec:lower}, which establish the upper and lower bounds, respectively. Finally, the corresponding result for Hamiltonian paths is derived in Section \ref{sec:Hamiltonian paths}.

\section{Preliminaries}\label{sec:preliminaries}

The aim of this section is to recall a number of basic notions from tournaments and linear algebra and to state some facts which will be used throughout this paper.

\subsection{Notation}

Let $T$ be a tournament and $v\in V(T)$. We denote by $N_T^+(v)$ and $N_T^-(v)$ the sets of out-neighbors and in-neighbors of $v$, respectively; and $d_T^+(v) = |N_T^+(v)|$ and $d_T^-(v) = |N_T^-(v)|$. Where there is no ambiguity about which tournament is being referred to we will sometimes drop the subscripts. The \textit{order} of $T$ is $|V(T)|$. Suppose $T$ has order $n$. The \textit{adjacency matrix} $A$ of $T$ is the $n\times n$ binary matrix with rows and columns indexed by $V(T)$, with $A_{ij} = 1$ if $ij$ is a directed edge of $T$. The \textit{Seidel matrix} of $T$ is the matrix $C = A-A^T$, and $K = C/n$ is its \textit{normalized Seidel matrix}.

If $n$ is an integer, then $[n] = \{1,\dots, n\}$. We denote by $\one_n$ the vector in $\mathbb R^n$ with every coordinate equal to one, and by $J_n = \one_n\one_n^T/n$ the $n\times n$ matrix with every entry equal to $1/n$. A \textit{probability vector} is a nonnegative vector whose entries sum to one. A matrix is \textit{doubly stochastic} if it is nonnegative and every row and column sums to one.

For an $n\times n$ matrix $M$ and subsets $I,J\subseteq[n]$, let $M[I,J]$ denote the submatrix of $M$ induced by the rows indexed by $I$ and the columns indexed by $J$. We abbreviate $M[U,U]$ to $M[U]$, and write $U^c = [n]\setminus U$. We denote the permanent, determinant, and Pfaffian of $M$ by $\per M$, $\det M$, and $\mathrm{pf}~\!\!M$, respectively. Let $S_n$ denote the symmetric group on $[n]$. A permutation $\sigma\in S_n$ is \textit{cyclic} if it consists of a single cycle. We define $\cyc M$ to be the quantity 
\[
    \cyc M = \sum_{\substack{\sigma\in S_n\\\sigma\text{ cyclic}}}\prod_{i=1}^n M_{i,\sigma(i)}.
\]

For vectors, $\norm\cdot_p$ denotes the usual $\ell_p$-norm, and for matrices $\norm\cdot_p$ denotes the induced matrix norm. In particular, $\norm M_\op = \norm M_2 = \sigma_1(M)$ is the \textit{spectral norm}, where $\sigma_1(M)\ge\dots\ge\sigma_n(M)$ are the singular values of $M$. When it is clear which matrix is being referred to, we will simply write $\sigma_1\ge\dots\ge\sigma_n$. We will also use the maximum entry norm $\norm M_\mx = \max_{1\le i,j\le n}|M_{ij}|$ and the Frobenius norm 
\[
    \norm M_F = \Big(\sum_{1\le i,j\le n}|M_{ij}|^2\Big)^{1/2}.
\]

Throughout this paper, $n$ and $N$ are asymptotic quantities tending to infinity which represent the order of the relevant tournaments. We will assume that $n$ and $N$ are sufficiently large for various asymptotic inequalities to hold. By abuse of notation, a statement such as ``let $D$ be an $N\times N$ matrix'' really means ``let $(D_N)_N$ be a sequence of matrices, where $D_N$ is an $N\times N$ matrix.''

\begin{remark}\label{remark:uniformity}
    All occurrences of $o(1)$ errors are uniform in their respective parameters, which are usually constants denoted $A_0$ and $\rho$ that bound properties of matrices. In particular, for these estimates, the $o(1)$ error terms will be uniform over all matrices satisfying those bounds, although the rate may depend on $A_0$ and $\rho$. This will be useful when we take the sum over a large set of matrices of some bound which comes with a multiplicative $1+o(1)$ error, whereupon uniformity allows us to pull out this common factor.
\end{remark}

\subsection{Permanent Bounds}

We recall the following standard bounds for permanents of nonnegative matrices. The first is Van der Waerden's conjecture, proved independently by Egorychev \cite{Egorychev1981} and Falikman \cite{Falikman1981}.

\begin{fact}\label{fact:egorychev-falikman}
    If $D$ is an $N\times N$ doubly stochastic matrix, then
    \[
        \per D\ge \frac{N!}{N^N}.
    \]
\end{fact}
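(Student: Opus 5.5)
The plan is to follow Gurvits' proof via real stable polynomials and capacity, because it is short and self-contained. Given the doubly stochastic matrix $D$, consider the homogeneous polynomial
\[
    p_N(x_1,\dots,x_N)=\prod_{i=1}^N\Big(\sum_{j=1}^N D_{ij}x_j\Big).
\]
Its coefficient of $x_1x_2\cdots x_N$ is exactly $\per D$. Define the capacity $\capacity(q)=\inf_{x>0} q(x)/\prod_j x_j$ for a polynomial $q$ in the variables $x_1,\dots,x_k$. The first step is to show $\capacity(p_N)\ge 1$. Row $i$ of $D$ is a probability vector, so weighted AM--GM gives $\sum_j D_{ij}x_j\ge\prod_j x_j^{D_{ij}}$ for $x>0$. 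Multiplying over $i$ and using that every column sum equals one yields $p_N(x)\ge\prod_j x_j$.

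Next, define successively $p_{k-1}(x_1,\dots,x_{k-1})=\partial_{x_k}p_k\big|_{x_k=0}$. Then $p_0=\per D$. We need two structural facts. First, each $p_k$ is real stable (nonvanishing when all variables have positive imaginary part) with nonnegative coefficients. This holds for $p_N$ because it is a product of linear forms with nonnegative coefficients. Stability is preserved by $\partial_{x_k}$ (Gauss--Lucas applied in one variable) and, after a Hurwitz limiting argument, by specializing $x_k=0$. Second, the degree of $p_k$ in the variable $x_k$ is at most $k$. This holds because $p_k$ is homogeneous of total degree $k$, and any monomial with $x_k$-degree above $k$ cannot occur.

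The heart of the proof is the inequality
\[
    \capacity(p_{k-1})\ge g(k)\,\capacity(p_k),\qquad g(k)=\Big(\tfrac{k-1}{k}\Big)^{k-1}.
\]
To prove it, fix $x_1,\dots,x_{k-1}>0$ and look at the univariate polynomial $t\mapsto p_k(x_1,\dots,x_{k-1},t)$. By stability it is real-rooted. It has nonnegative coefficients, so all its roots are nonpositive, and its degree is at most $k$. This reduces the claim to a one-variable lemma: if $f(t)=\prod_{l\le m}(a_lt+b_l)$ with $a_l,b_l\ge0$ and $m\le k$, then
\[
    f'(0)\ge g(k)\inf_{t>0}\frac{f(t)}{t}.
\]
I expect this lemma, together with checking that $g$ is decreasing so that $m\le k$ suffices, to be the main obstacle. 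My first approach would be to normalize $f(0)=\prod_l b_l$ and $f'(0)/f(0)=\sum_l a_l/b_l=:s$. By AM--GM, $f(t)\le f(0)(1+st/m)^m$. Minimizing $(1+st/m)^m/t$ over $t>0$ gives the value $s\,(m/(m-1))^{m-1}$. Hence $\inf_t f(t)/t\le f'(0)/g(m)\le f'(0)/g(k)$. The degenerate cases, where $f(0)=0$ or $m\le1$, are handled directly. Applying this bound and taking the infimum over $x_1,\dots,x_{k-1}>0$ gives the displayed capacity inequality.

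Finally, we iterate from $k=N$ down to $k=1$:
\[
    \per D=p_0\ge\prod_{k=1}^N g(k)\cdot\capacity(p_N)\ge\prod_{k=1}^N\Big(\tfrac{k-1}{k}\Big)^{k-1}.
\]
The product telescopes: writing it as $\prod_k (k-1)^{k-1}/k^{k-1}=\prod_k (k-1)^{k-1}k/k^k$, it equals $N!/N^N$. This proves Fact~\ref{fact:egorychev-falikman}.
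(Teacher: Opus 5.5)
Your argument is correct, and it is a genuinely different route from the paper. The paper gives no proof of this fact; it simply cites Egorychev and Falikman, whose original arguments analyze a minimizing doubly stochastic matrix (Egorychev's via the Alexandrov--Fenchel inequality for mixed discriminants). You instead give Gurvits' real-stable-polynomial proof, and each step you list goes through:
\begin{itemize}
\item the weighted AM--GM bound $\capacity(p_N)\ge 1$;
\item real-rootedness, with nonpositive roots, of $t\mapsto p_k(x',t)$ for $x'>0$;
\item the univariate bound $\inf_{t>0}f(t)/t\le f'(0)/g(m)$, with $g(m)=(1+1/(m-1))^{-(m-1)}$ decreasing;
\item the telescoping product, using the convention $g(1)=0^0=1$, which your degenerate case $m\le 1$ covers.
\end{itemize}

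Your route gives a short, self-contained proof that avoids Alexandrov--Fenchel. It also fits the paper's own toolkit. Your first step is exactly the AM--GM computation in Lemma~\ref{lem:capacity sinkhorn} showing that a doubly stochastic matrix has capacity one. Moreover, $\capacity(p_N)$ is precisely the matrix capacity used in Section~\ref{sec:modifications}. So the same chain proves the scaling-invariant bound $\per Q\ge\frac{N!}{N^N}\capacity(Q)$ for every nonnegative $Q$.

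The one step your sketch glosses over is the Gauss--Lucas step. To conclude that $\partial_{x_k}p_k$ is stable, you need $t\mapsto p_k(z',t)$ to have the same degree $d\ge 1$ for every $z'$ in the open upper half-plane. This holds for two reasons:
\begin{itemize}
\item the leading coefficient in $x_k$ is a Hurwitz limit of stable polynomials, hence nonvanishing there;
\item $d\ge1$ holds inductively, since your chain already gives $\capacity(p_{k-1})>0$.
\end{itemize}
This is routine and does not affect the argument.
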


The second is Minc's conjecture on the permanent of binary matrices, proved by Bregman \cite{Bregman1973}. It will be convenient in our applications to use the following generalization to nonnegative matrices whose entries are bounded, due to Samorodnitsky. For a proof, see \cite[5.2.3]{Barvinok2008}.

\begin{fact}\label{fact:bregman}
    If $Q$ is a nonnegative $N\times N$ matrix with all row sums equal to $1$, and there exist positive integers $r_1,\dots, r_N$ such that $Q_{ij} \le 1/r_i$ for every $i,j\in [N]$, then
    \[
        \per Q \le \prod_{i=1}^N\frac{(r_i!)^{1/r_i}}{r_i}.
    \]
\end{fact}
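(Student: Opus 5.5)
The plan is to reduce the statement to Bregman's theorem for $0/1$ matrices, using multilinearity of the permanent in the rows together with the integrality of the caps $1/r_i$. For each $i\in[N]$, let $P_i\subseteq\R^N$ be the polytope of vectors $x$ with $0\le x_j\le 1/r_i$ for all $j$ and $\sum_j x_j=1$. The hypotheses say exactly that the $i$-th row of $Q$ lies in $P_i$. I assume $r_i\le N$, since otherwise $P_i$ is empty and there is nothing to prove.

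The first step is a row-by-row extremal argument. Fix all rows of $Q$ except row $i$. Expanding along row $i$ shows that $\per Q$ is a linear function of that row. Its maximum over the compact convex polytope $P_i$ is therefore attained at a vertex of $P_i$. Replace row $i$ by such a vertex, which does not decrease the permanent, and repeat for $i=1,\dots,N$ in turn. Each replacement keeps the matrix nonnegative, keeps all row sums equal to $1$, and preserves the caps $Q_{ij}\le 1/r_i$. After $N$ steps we obtain a matrix $Q'$ with $\per Q\le\per Q'$ in which every row is a vertex of the corresponding $P_i$.

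The key step, and the only point needing care, is identifying the vertices of $P_i$. The claim is that they are exactly the vectors with $r_i$ coordinates equal to $1/r_i$ and all other coordinates equal to $0$. To see this, note that at a vertex, $N$ linearly independent constraints must be tight. Only one of these can be the equality $\sum_j x_j=1$, so at least $N-1$ coordinates lie at a bound, either $0$ or $1/r_i$. Hence at most one coordinate $x_k$ is strictly between $0$ and $1/r_i$. Let $m$ be the number of coordinates equal to $1/r_i$. Then $x_k=1-m/r_i$, and this lies strictly between $0$ and $1/r_i$ only if $r_i-1<m<r_i$, which is impossible because $r_i$ and $m$ are integers. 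So no fractional coordinate exists, and then $m=r_i$. This is precisely where the integrality of the $r_i$ is used.

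It follows that $Q'=\diag(1/r_1,\dots,1/r_N)\,A$ for a binary matrix $A$ with row sums $r_1,\dots,r_N$. Applying Bregman's theorem \cite{Bregman1973} to $A$ gives
\[
\per Q\le\per Q'=\frac{\per A}{\prod_i r_i}\le\prod_{i=1}^N\frac{(r_i!)^{1/r_i}}{r_i},
\]
which is the claimed bound.
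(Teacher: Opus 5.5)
Your proof is correct: multilinearity lets you move each row to a vertex of the capped simplex. Integrality of $r_i$ forces those vertices to be $0/1$ vectors scaled by $1/r_i$, so the claim reduces exactly to Bregman's theorem. The paper does not prove this fact itself but cites Barvinok \cite[5.2.3]{Barvinok2008}, and I believe the argument there is essentially this same reduction to Bregman.
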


Normalizing the rows of a binary matrix and applying the above recovers the classical statement of Minc's conjecture.

We record here a universal bound on the spectral norm of a tournament Seidel matrix. The proof is a quick application of the following inequality of Pick \cite{Pick1922}.

\begin{fact}\label{fact:pick}
    Let $A$ be an $N\times N$ real matrix and let $g = \max_{1\le i,j\le N}\left|\frac{A_{ij}-A_{ji}}{2}\right|$. Then every eigenvalue $\lambda$ of $A$ satisfies $|\mathrm{Im}(\lambda)|\le g\cot(\frac{\pi}{2N})$.
\end{fact}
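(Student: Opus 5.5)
The plan is to reduce the bound to a spectral estimate for one explicit Hermitian matrix. Let $\lambda$ be an eigenvalue of $A$ with eigenvector $z\in\C^N$, normalized so that $\norm z_2=1$. Then $\lambda=z^*Az$. Split $A=\frac{A+A^T}{2}+S$ with $S=\frac{A-A^T}{2}$ real skew-symmetric. The symmetric part contributes a real number to $z^*Az$, so
\[
\mathrm{Im}(\lambda)=\mathrm{Im}(z^*Sz).
\]
Write $z_j=r_je^{i\theta_j}$ with $r_j\ge0$. Then
\[
\mathrm{Im}(z^*Sz)=\sum_{j,l}S_{jl}\,r_jr_l\sin(\theta_l-\theta_j).
\]
Using $|S_{jl}|\le g$ gives
\[
|\mathrm{Im}(\lambda)|\le g\sum_{j,l}r_jr_l\,|\sin(\theta_l-\theta_j)|.
\]
It therefore suffices to show that $\sum_{j,l}r_jr_l|\sin(\theta_l-\theta_j)|\le\cot(\frac{\pi}{2N})$ for every unit vector $r\ge0$ and all angles $\theta_j$.

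Since $|\sin|$ has period $\pi$, we may replace each $\theta_j$ by its residue in $[0,\pi)$. After relabeling we may assume $0\le\theta_1\le\dots\le\theta_N<\pi$. Then for $j<l$ we have $\theta_l-\theta_j\in[0,\pi)$, so $|\sin(\theta_l-\theta_j)|=\sin(\theta_l-\theta_j)$. Hence the sum equals
\[
\sum_{j,l}\operatorname{sgn}(l-j)\,r_jr_l\sin(\theta_l-\theta_j).
\]
Now set $u_j=r_je^{i\theta_j}$, so that $\norm u_2=1$. Let $H$ be the $N\times N$ Hermitian matrix with $H_{jl}=-i\operatorname{sgn}(l-j)$, which has zero diagonal. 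A direct expansion gives
\[
u^*Hu=\sum_{j,l}r_jr_l\operatorname{sgn}(l-j)\sin(\theta_l-\theta_j).
\]
Here the cosine terms cancel by antisymmetry of $\operatorname{sgn}$, and the quantity is real because $H$ is Hermitian. So the sum is at most $\lambda_{\max}(H)$.

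The main step is computing the spectrum of $H$, which I expect to be the only real obstacle. I would test vectors $v_j=\omega^j$ with $\omega^N=-1$ and $\omega=e^{i\varphi}$, where $\varphi=\frac{(2k-1)\pi}{N}$ for $k=1,\dots,N$. Sum the two geometric series $\sum_{l>j}\omega^l$ and $\sum_{l<j}\omega^l$. The boundary terms are $\frac{\omega^{N+1}}{1-\omega}$ and $\frac{\omega}{1-\omega}$, and they combine consistently exactly because $\omega^N=-1$. This should yield
\[
Hv=\frac{i(1+\omega)}{1-\omega}\,v,\qquad \frac{i(1+\omega)}{1-\omega}=\cot\Big(\frac{\varphi}{2}\Big)\cdot(\pm1).
\]
I will fix the sign during the computation; the set of eigenvalues is symmetric in any case, since $\overline H=-H$. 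These are $N$ distinct $\omega$, so they give the whole spectrum, and
\[
\lambda_{\max}(H)=\cot\Big(\frac{\pi}{2N}\Big).
\]
Combining this with the reductions above gives $|\mathrm{Im}(\lambda)|\le g\cot(\frac{\pi}{2N})$.

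As a sanity check, equally spaced angles $\theta_j=j\pi/N$ with uniform $r$ attain the bound. Indeed, $\sum_{k=0}^{N-1}\sin(k\pi/N)=\cot(\frac{\pi}{2N})$, so the estimate is sharp.
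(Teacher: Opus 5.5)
The paper gives no proof of this fact. It is quoted from Pick's 1922 paper and used only as a black box in Lemma~\ref{lem:Seidel norm bound}. Your proposal is a correct, self-contained proof, and each reduction checks out: the symmetric part only affects $\mathrm{Re}(z^*Az)$; folding the angles into $[0,\pi)$ and sorting turns $|\sin(\theta_l-\theta_j)|$ into $\operatorname{sgn}(l-j)\sin(\theta_l-\theta_j)$; and the folded vector $u$ is still a unit vector, which is all the Rayleigh-quotient bound needs.

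On the sign you left open: the boundary terms cancel as you say, leaving $(Hv)_j=-i\,\frac{1+\omega}{1-\omega}\,\omega^j$. Since $\frac{1+e^{i\varphi}}{1-e^{i\varphi}}=i\cot(\varphi/2)$, the eigenvalues are exactly $\cot\big(\frac{(2k-1)\pi}{2N}\big)$ for $k=1,\dots,N$. The maximum is $\cot(\frac{\pi}{2N})$, attained at $k=1$, and your symmetry remark makes the sign immaterial anyway.

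Compared with the citation, your route buys two things relevant to this paper. First, $H=-iU_N$, where $U_N$ is the transitive-tournament Seidel matrix of Section~\ref{sec:Z bound}. So your argument shows that Lemma~\ref{lem:Seidel norm bound} is sharp, with equality for the transitive tournament. Second, that lemma does not need the general eigenvalue statement at all. A real skew-symmetric $C$ is normal, so $\norm C_\op=\max_{\norm z_2=1}|z^*Cz|$. Your reduction with $g=1$ bounds this directly by $\lambda_{\max}(H)=\cot(\frac{\pi}{2N})$.
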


\begin{lemma}\label{lem:Seidel norm bound}
    If $C$ is the Seidel matrix of a tournament on $N$ vertices, then
    \[
        \norm C_\op\le\cot\left(\frac\pi{2N}\right).
    \]
\end{lemma}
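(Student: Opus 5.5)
We plan to apply Fact \ref{fact:pick} not to $C$ itself but to a matrix built from it whose eigenvalues encode the spectrum of $C$. The key observation is that $C = A - A^T$ is a real skew-symmetric matrix. It is therefore normal, with purely imaginary eigenvalues $\pm i\sigma_k$, and its singular values are exactly the absolute values of these eigenvalues. Consequently $\norm C_\op = \max_\lambda |\mathrm{Im}(\lambda)|$ over the eigenvalues $\lambda$ of $C$, and it suffices to bound the imaginary parts of the eigenvalues of $C$.

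To do this, we take the matrix in Fact \ref{fact:pick} to be $M = A + \frac12 I$, or any matrix $M$ with $M - M^T = C$; a cleaner choice is simply $M = C/2$, but we want the hypothesis constant $g$ to be as small as possible. With $M = A$ (the adjacency matrix), we have $(A_{ij} - A_{ji})/2 = C_{ij}/2 \in \{\pm\frac12, 0\}$, so $g = \frac12$. Pick's inequality then bounds $|\mathrm{Im}\lambda|$ for eigenvalues of $A$, which is not what we want, since eigenvalues of $A$ are not those of $C$.

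So instead we apply Fact \ref{fact:pick} directly to $M = C$. For $i\neq j$ the entries satisfy $|C_{ij}| = 1$, and the diagonal vanishes. This gives $g = \max|(C_{ij}-C_{ji})/2| = \max|C_{ij}| = 1$. Hence every eigenvalue $\lambda$ of $C$ satisfies $|\mathrm{Im}\lambda| \le \cot(\frac{\pi}{2N})$. Since the eigenvalues of $C$ are purely imaginary and $C$ is normal, we get $\norm C_\op = \max|\lambda| = \max|\mathrm{Im}\lambda| \le \cot(\frac{\pi}{2N})$, which is the claimed bound.

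The only step requiring care is the justification that the spectral norm equals the spectral radius. This holds because $C$ is real skew-symmetric, hence normal: $CC^T = -C^2 = C^TC$. By the spectral theorem for normal matrices, $C$ is unitarily diagonalizable, so its singular values are the moduli of its eigenvalues. Beyond this, the argument is an immediate substitution into Pick's inequality, so we expect no genuine obstacle.
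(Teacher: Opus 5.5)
Your argument is correct and is essentially the paper's proof: apply Pick's inequality directly to $C$ with $g=1$, then use that $C$ is real skew-symmetric (hence normal with purely imaginary spectrum), so $\norm C_\op$ equals the largest $|\mathrm{Im}\,\lambda|$. The exploratory detour through $M=A$ or $M=A+\frac12 I$ contributes nothing and can be dropped.
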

\begin{proof}
    Since $C$ is the Seidel matrix of a tournament, we have
    \[
        \max_{1\le i,j\le N}\left|\frac{C_{ij}-C_{ji}}{2}\right| = \left|\frac{1-(-1)}{2}\right| = 1.
    \]
    Moreover, $C$ is real and skew-symmetric, so every nonzero eigenvalue of $C$ has real part zero. Therefore, by Pick's theorem, every eigenvalue $\lambda$ of $C$ satisfies $|\lambda| = |\mathrm{Im}(\lambda)|\le\cot(\frac\pi{2N})$. Since the singular values of a real skew-symmetric matrix are the absolute values of its eigenvalues, it follows that $\norm C_\op \le \cot(\frac\pi{2N})$.
\end{proof}

\section{Permanent bounds for doubly stochastic matrices}\label{sec:permanent bounds}

In this section, we rederive the following asymptotic estimate of McCullagh for the permanent of a doubly stochastic matrix.

\begin{proposition}\label{prop:permanent bound}
    Let $A_0 < \infty$ and $\rho < 1$. If $D = J_N + B$ is an $N\times N$ doubly stochastic matrix such that $\norm B_\mx \le A_0/N$ and $\norm B_\op \le \rho$, then as $N\to \infty$,
    \[
        \frac{N^N\per D}{N!} = (1+o(1))\det(I-B^TB)^{-1/2}.
    \]
\end{proposition}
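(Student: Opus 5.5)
We will expand $\per D$ around $J_N$ by multilinearity and identify the resulting sum as a Gaussian-type moment. Since $D = J_N + B$, we have
\[
    \per D=\sum_{I,J}\per J_N[I^c,J^c]\,\per B[I,J],
\]
where the sum ranges over pairs $I,J\subseteq[N]$ with $|I|=|J|=k$. Here $\per J_N[I^c,J^c]=(N-k)!/N^{N-k}$. Hence
\[
    \frac{N^N\per D}{N!}=\sum_{k}\frac{(N-k)!\,N^k}{N!}\,p_k(B),
\qquad
    p_k(B)=\sum_{|I|=|J|=k}\per B[I,J].
\]
The double stochasticity of $D$ says exactly that $B\one=B^T\one=0$. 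This kills many low-order terms: for instance $p_1(B)=\sum_{ij}B_{ij}=0$, and more generally any contribution in which some row or column index is free to range over all of $[N]$ vanishes.

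The key step is to match $\sum_k \frac{(N-k)!N^k}{N!}p_k(B)$ with the expansion of $\det(I-B^TB)^{-1/2}$. The plan is to use a Gaussian representation. Let $g,h$ be independent real Gaussian vectors in $\R^N$ projected onto $\one^\perp$. Then, heuristically,
\[
    \det(I-B^TB)^{-1/2}
\]
is the limit of the generating function $\sum_k p_k(B)/N^{-k}\cdots$ after accounting for $(N-k)!N^k/N!\approx e^{-k^2/2N}$. A cleaner route is to write the permanent through the random-sign (Glynn/Gaussian) formula
\[
    \per D=\E\prod_i (Dz)_i\bar z_i
\]
with $z$ complex Gaussian. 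Substituting $Dz = \frac{1}{N}(\one^Tz)\one + Bz$ and conditioning on the scalar $s=\one^Tz/\sqrt N$ reduces the problem to
\[
    \E\Big[\prod_i\big(s/\sqrt N+(Bz)_i\big)\bar z_i\Big].
\]
We then take logarithms: $\prod_i(1+\sqrt N (Bz)_i/s)$, with $\sum_i(Bz)_i=0$, has a cumulant expansion in which the linear term vanishes. The quadratic term $-\frac{N}{2s^2}\sum_i(Bz)_i^2$ gives, after a Laplace-type evaluation in $|s|^2\approx N$, the Gaussian integral over $z\in\one^\perp$. That integral produces $\det(I-B^TB)^{-1/2}$. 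Roughly, the $\bar z_i$ weights couple with $Bz$, so the relevant quadratic form is $\begin{pmatrix}0&B\\B^T&0\end{pmatrix}$-like and yields the square root of $\det(I-B^TB)^{-1}$. The normalization $N!/N^N$ comes from the $s$-integral via Stirling.

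The main obstacle is controlling the higher cumulants and the regions where $|(Bz)_i|$ is not small compared to $|s|/\sqrt N$. The hypotheses are designed exactly for this. The bound $\norm B_\mx\le A_0/N$ makes each $(Bz)_i$ typically $O(N^{-1/2})$ with uniformly subgaussian tails. The cubic and higher terms then sum to $O(N\cdot N^{-3/2})=o(1)$ in expectation, with uniformity in $A_0$. The bound $\norm B_\op\le\rho<1$ keeps the Gaussian integral nondegenerate and gives a uniform geometric bound on the tail of the series $\sum_m \tr((B^TB)^m)/2m$, since $\norm{B}_F^2\le N A_0^2/N^2\cdot N=A_0^2$.

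To make the argument rigorous, we will truncate to the event $|s|^2\in[N-N^{2/3},N+N^{2/3}]$ and $\max_i|(Bz)_i|\le N^{-1/2}\log N$. The complement is handled by crude bounds: $|\per|\le$ the product of row norms, together with Gaussian tails. On the good event we Taylor expand $\log(1+x)$ to third order. If this direct approach becomes messy, the fallback is the combinatorial expansion above. There one would show that $p_k(B)$ concentrates on perfect matching-like pairings of $\tr((B^TB)^m)$ cycles, by a Wick-type argument, with error terms bounded using $\norm B_\mx$ and $\norm B_F\le A_0$.
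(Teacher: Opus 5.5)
\paragraph{Verdict: there is a genuine gap.} Your set-up is correct, but the central evaluation is only heuristic, and the rigorization you propose cannot work.

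\paragraph{What is correct, and where it breaks.} Your starting point coincides with the paper's. Write $z=\frac{s}{\sqrt N}\one+w$ with $w\perp\one$ independent of $s$, and set $X=Bw$. Then the conditional $w$-expectation of $\prod_i(Dz)_i\bar z_i$ is exactly $(|s|^2/N)^N R_B(N/|s|^2)$, where $R_B(\lambda)=\sum_k\lambda^kp_k(B)=\E_w\bigl[\prod_i(1+\sqrt\lambda\,X_i)(1+\sqrt\lambda\,\bar w_i)\bigr]$. Integrating out $|s|^2\sim\mathrm{Exp}(1)$ gives precisely $N^N\per D/N!=\E[R_B(N/T)]$ with $T\sim\Gamma(N+1,1)$, as in Lemma~\ref{lem:3.4}. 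Everything therefore rests on proving $R_B(\lambda)=(1+o(1))\det(I-\lambda^2B^TB)^{-1/2}$ uniformly for $\lambda$ near $1$ (Proposition~\ref{prop:rook function asymptotic}). This is where the gap is.

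You log-expand only $\prod_i(1+\sqrt\lambda\,X_i)$, whose factors are $1+O(N^{-1/2})$. The companion factor $\prod_i(1+\sqrt\lambda\,\bar w_i)$ has factors deviating from $1$ by $O(1)$, so it admits no such expansion. Your claim that a Gaussian integral over $\one^\perp$ ``produces'' $\det(I-B^TB)^{-1/2}$ is therefore asserted rather than derived.

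More seriously, the integrand is oscillatory with exponential cancellation. For $B=0$ and $|s|^2=N$, the signed $w$-expectation is exactly $1$. Yet $\E_w\prod_i|1+e^{i\phi}\bar w_i|=e^{(c+o(1))N}$ for an absolute $c>0$; a Cram\'er computation under the constraint $\sum_iw_i=0$ gives $c\approx 0.17$. For general admissible $B$ the $X$-factors only contribute $e^{O(1)}$, so the excess persists. Consequently, a remainder that is $o(1)$ pointwise on your good event, or a complement bounded in absolute value, is controlled only by $e^{cN}\per D$, not by $o(\per D)$.

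The culprit is your asymmetric splitting $X=Bw$, $Y=\bar w$, which loads all the size onto one side. The paper instead splits $B=U\Sigma V^T$ symmetrically, taking $X=U\Sigma^{1/2}z$ and $Y=V\Sigma^{1/2}\bar z$. Then AM--GM together with $\sum_iX_i=\sum_jY_j=0$ caps the absolute mass at $e^{O(\sqrt N)}$. That is small enough to be beaten by the exponentially small tails of $T$, but still far from $1+o(1)$, so even there no pointwise expansion is attempted.

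\paragraph{The missing idea: evaluate exactly before estimating.} The paper uses the Gaussian change of measure $\E[P(X)]=\det(I+\Omega)^{-1/2}\E[P(Y)e^{Y^TY/2}]$ for polynomials $P$ (Lemma~\ref{lem:3.5}), with $\Omega=\begin{pmatrix}0&sB\\sB^T&0\end{pmatrix}$. This extracts $\det(I-s^2B^TB)^{-1/2}$ with no error. Because $B\one=B^T\one=0$ forces $\one^TY=0$ almost surely, what remains is $\E[\prod_a r(Y_a)]$ with $r(y)=(1+y)e^{-y+y^2/2}=1+O(y^3)$. This is the exact counterpart of your observation that the linear cumulant vanishes.

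Showing this expectation is $1+o(1)$ still takes work. One replaces $B$ by $tB$ and bounds the Taylor coefficients $f_k$ in two regimes:
\begin{itemize}
\item For $k\le K\sqrt N$, use Wick's formula. Each Gaussian variable appears with multiplicity at least $3$, so a pairing with $E$ edges on $v$ vertices contributes $(E/N)^{E-v}$ with $E-v\ge E/3$ (Lemma~\ref{lem:3.7}).
\item For $k>K\sqrt N$, use Cauchy's estimate on $|t|=R>1$ together with the a priori bound $|R_{tB}(s)|\le e^{C\sqrt N}$.
\end{itemize}
Your fallback (``$p_k(B)$ concentrates on cycle pairings'') names the target of this analysis but supplies none of these ingredients. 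In particular, nothing in the proposal controls $k\gtrsim\sqrt N$. There the weights amplify rather than damp: $(N-k)!N^k/N!=\prod_{j<k}(1-j/N)^{-1}\approx e^{+k^2/2N}$, not $e^{-k^2/2N}$. So the decay for large $k$ must come from $p_k(B)$ itself.
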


If $D$ is an $N\times N$ doubly stochastic matrix, then $\frac{N!}{N^N}\le \per D$ by Van der Waerden's conjecture (Fact \ref{fact:egorychev-falikman}), with equality obtained for $J_N$. Proposition \ref{prop:permanent bound} gives an asymptotic estimate for how far $\per D$ deviates from this lower bound as a function of the matrix $B$, which in turn represents how far $D$ deviates from $J_N$. We require some basic assumptions about $B$, namely that every entry of $B$ is $O(N^{-1})$ and the spectral norm of $B$ is bounded away from $1$.

This permanent estimate is due to McCullagh \cite{McCullagh2014}, where it is stated under slightly weaker assumptions on $D$ and the error is explicitly found to be an additive $O(N^{-1})$. For our application, we take slightly stronger assumptions on $D = J_N+B$, namely that each entry is of order $O(N^{-1})$ and the spectral norm is less than one, which are more convenient for our application. This also allows us to ensure that the $1+o(1)$ error is uniform over all matrices $D$ satisfying these assumptions; that is, the error tends to zero at a rate depending only on the fixed constants $A_0$ and $\rho$ (in accordance with Remark \ref{remark:uniformity}). We will need uniformity later on in order to pull out this $1+o(1)$ error factor when summing over the permanents of many such matrices. For these reasons we give a self-contained proof in this section.

Let us briefly outline the proof of Proposition~\ref{prop:permanent bound}. In Section \ref{sec:3.1}, we express the permanent as an expectation
$$
\frac{N^N\per D}{N!}=\E\left[R_B(N/T)\right],
$$
where $R_B$ is the rook polynomial and $T$ is a Gamma random variable. As $N\to\infty$, $N/T$ converges to $1$ in probability, so we essentially need to understand the behavior of $R_B(s)$ for $s$ close to $1$. This is done in Section \ref{sec:3.2}, where we establish in Proposition~\ref{prop:rook function asymptotic} a uniform asymptotic formula for $R_B(s)$ in a neighborhood of $1$. Finally, in Section \ref{sec:3.3}, we combine these estimates with bounds on the contribution from the tails of $T$ to complete the proof of Proposition~\ref{prop:permanent bound}.

\subsection{A probabilistic identity for the permanent}\label{sec:3.1}
Let us recall Wick's formula for the moments of complex Gaussian random variables, which will be used repeatedly. For an integer $k\ge 1$ we denote by $\mathcal P_2([2k])$ the set of partitions of $[2k]$ into pairs. A centered complex Gaussian random variable (vector resp.) is the image of a standard real Gaussian random variable (vector resp.) under a complex linear transform.

\begin{fact}
    Let $Z_1,\dots, Z_m$ be jointly centered complex Gaussian random variables, and let each $W_j$ be one of the variables $Z_i$ or $\overline{Z_i}$. Then $\mathbb E[W_1\cdots W_{2k+1}]=0$, and
    \[
        \mathbb E[W_1\cdots W_{2k}]=\sum
        _{\pi\in\mathcal P_2([2k])}\prod_{\{a,b\}\in\pi}\mathbb E[W_aW_b].
    \]
\end{fact}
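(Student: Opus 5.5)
Since each $Z_i$ is the image of a standard real Gaussian vector under a complex linear map, I will first reduce the statement to a real one. Fix a standard real Gaussian vector $X=(X_1,\dots,X_d)$ and complex vectors $c_i\in\C^d$ with $Z_i=\sum_{r}c_{i,r}X_r$. Then $\overline{Z_i}=\sum_r\overline{c_{i,r}}X_r$, because the $X_r$ are real. So every $W_j$ is a complex linear form $W_j=\langle a_j,X\rangle:=\sum_r a_{j,r}X_r$ in the same real vector $X$. Both sides of the claimed identity are multilinear in $(a_1,\dots,a_{2k})$. The right-hand side is multilinear because each index appears in exactly one pair of $\pi$, and $\E[W_aW_b]=\sum_{r}a_{a,r}a_{b,r}$ is bilinear. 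It therefore suffices to prove the identity for real vectors $a_j$, and then extend to complex coefficients by splitting each $a_j$ into real and imaginary parts and expanding both sides.

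For the odd case, I will use the symmetry $X\overset{d}{=}-X$. It gives $\E[W_1\cdots W_{2k+1}]=(-1)^{2k+1}\E[W_1\cdots W_{2k+1}]$, so this expectation is zero. All moments are finite since Gaussians have moments of all orders.

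For the even case with real $a_j$, I will argue via the moment generating function. Take $t\in\R^{2k}$. The quantity $\sum_j t_jW_j=\langle\sum_j t_ja_j,X\rangle$ is a centered real Gaussian with variance $\sum_{a,b}t_at_b\,\E[W_aW_b]$. Hence
\[
    \E\Big[\exp\Big(\sum_j t_jW_j\Big)\Big]=\exp\Big(\tfrac12\sum_{a,b}t_at_b\,\E[W_aW_b]\Big).
\]
I then extract the coefficient of $t_1t_2\cdots t_{2k}$ on both sides. This is legitimate because both sides are entire in $t$ and the exponential moments are finite, so differentiation under the expectation is justified. On the left, the coefficient is $\E[W_1\cdots W_{2k}]$. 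On the right, only the term $\frac{1}{k!}\big(\frac12\sum_{a,b}t_at_b\E[W_aW_b]\big)^k$ contributes a squarefree monomial of degree $2k$. Expanding it, each pair partition $\pi$ arises from $k!$ orderings of its blocks and $2^k$ orientations of the pairs. This yields exactly $\sum_{\pi\in\mathcal P_2([2k])}\prod_{\{a,b\}\in\pi}\E[W_aW_b]$.

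An alternative route is induction on $k$ via Gaussian integration by parts, $\E[X_rF(X)]=\E[\partial_rF(X)]$. This gives $\E[W_1\cdots W_{2k}]=\sum_{b\ge2}\E[W_1W_b]\,\E\big[\prod_{j\ne1,b}W_j\big]$, which is the recursive form of the pairing sum. The only step needing care is the reduction to real coefficients: one must check that $\E[W_aW_b]$ is taken without conjugation, so that it is genuinely bilinear and the extension from real to complex coefficients is valid. The combinatorial coefficient count is routine.
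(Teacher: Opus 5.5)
The paper gives no proof of this statement: it recalls it as the classical Wick (Isserlis) formula and uses it as a black box, so there is no argument of the authors to compare against.

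Your proof is correct and complete. The three steps all check out.
\begin{itemize}
\item Writing every $W_j$ as a complex linear form $\langle a_j,X\rangle$ in a single standard real Gaussian vector $X$ is exactly the paper's definition of jointly centered complex Gaussians.
\item The symmetry $X\mapsto -X$ kills the odd moments.
\item Extracting the coefficient of $t_1\cdots t_{2k}$ from $\exp\bigl(\tfrac12\sum_{a,b}t_at_b\E[W_aW_b]\bigr)$ with the $k!\,2^k$ count gives the pairing sum. This count uses $\E[W_aW_b]=\E[W_bW_a]$, which is immediate.
\end{itemize}

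One simplification: the reduction to real coefficients is unnecessary, though harmless. Since $\E[e^{zX_r}]=e^{z^2/2}$ for every $z\in\C$, independence gives $\E[e^{\langle v,X\rangle}]=e^{v^Tv/2}$ for every $v\in\C^d$. The same coefficient extraction then works verbatim with complex $a_j$. This complex moment generating function identity is precisely what the paper differentiates in the proof of Lemma~\ref{lem:3.5}, so your route fits the paper's own toolkit.

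The integration-by-parts recursion you mention as an alternative also works directly over $\C$. It is equally valid: it trades the global combinatorial count for an induction on $k$.
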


Fix an $N\times N$ matrix $B$. For $0\le k\le N$, let $r_k(B)$ be the sum of the permanents of all $k\times k$ submatrices $B[I,J]$, where $I$ and $J$ range over the $k$-subsets of $[N]$, and by convention $r_0(B)=1$. Let
\[
    R_B(s)=\sum_{k=0}^N s^kr_k(B)
\]
be the generating function of $r_k(B)$. We first express $R_B(s)$ as an expectation. 

\begin{lemma}\label{lem:3.3}
    Let $B$ be a real $N\times N$ matrix and let $s\ge0$. There are jointly complex Gaussian vectors $X,Y$ satisfying $\E[X_iY_j]=B_{ij}$ and $\E[X_iX_j] = \E[Y_iY_j]=0$ for every $1\le i,j\le N$, for which
    \[
        R_B(s)=\E\left[\prod_{i=1}^N(1+\sqrt sX_i)\prod_{j=1}^N(1+\sqrt sY_j)\right].
    \]
\end{lemma}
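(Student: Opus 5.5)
We first construct the Gaussian vectors and then expand the product. Write $B = UV^T$ for real $N\times N$ matrices $U,V$; for instance take $U = B$ and $V = I$. Let $G$ and $H$ be independent standard real Gaussian vectors in $\R^N$, and set $W = (G+iH)/\sqrt2$. Then $W$ is a standard circularly symmetric complex Gaussian vector, with $\E[W_aW_b]=0$ and $\E[W_a\overline{W_b}]=\delta_{ab}$.

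Define $X = UW$ and $Y = V\overline W$. Both are complex linear images of the real Gaussian vector $(G,H)$, so $(X,Y)$ is jointly centered complex Gaussian. We then check the covariances:
\[
\E[X_iY_j]=\sum_{a,b}U_{ia}V_{jb}\E[W_a\overline{W_b}]=(UV^T)_{ij}=B_{ij},
\]
\[
\E[X_iX_j]=\sum_{a,b}U_{ia}U_{jb}\E[W_aW_b]=0,
\]
and similarly $\E[Y_iY_j]=0$, since $\E[\overline{W_a}\,\overline{W_b}]=0$.

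Next, expand the product over subsets:
\[
\prod_i(1+\sqrt sX_i)\prod_j(1+\sqrt sY_j)=\sum_{I,J\subseteq[N]}s^{(|I|+|J|)/2}\prod_{i\in I}X_i\prod_{j\in J}Y_j.
\]
Take expectations term by term, which is legitimate since the sum is finite and Gaussian moments are finite. For each term, apply Wick's formula to the monomial $\prod_{i\in I}X_i\prod_{j\in J}Y_j$. Any pairing that matches two $X$'s or two $Y$'s contributes zero. So only perfect matchings between the $X$-factors and the $Y$-factors survive, and these exist only when $|I|=|J|=k$. Such matchings correspond to bijections $\sigma:I\to J$, each contributing $\prod_{i\in I}B_{i\sigma(i)}$. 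Summing over bijections gives $\per B[I,J]$, with coefficient $s^k$. Summing over all $k$-subsets $I,J$ and over $k$ yields $\sum_k s^k r_k(B)=R_B(s)$. The $k=0$ term is $1$, matching the convention $r_0(B)=1$.

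There is no serious obstacle. The one point needing care is the factorization $B=UV^T$ with $Y$ built from $\overline W$, so that the cross-covariances are nonzero while $\E[XX]$ and $\E[YY]$ vanish. This is what makes Wick's formula collapse onto bipartite matchings, and hence permanents. The restriction $s\ge0$ only ensures that $\sqrt s$ is real; it plays no further role.
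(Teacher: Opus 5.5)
Your proof is correct and is essentially the paper's argument: realize $B$ as the cross-covariance $\E[XY^T]$ using a standard complex Gaussian vector and its conjugate, expand the product over pairs of subsets $I,J$, and use Wick's formula so that only $X$--$Y$ pairings survive, which produces $\per B[I,J]$. The one difference is the factorization. You take $X=BW$, $Y=\overline W$, whereas the paper uses the singular value decomposition $B=U\Sigma V^T$ with $X=U\Sigma^{1/2}z$, $Y=V\Sigma^{1/2}\bar z$. Both prove the lemma as stated. The paper's balanced choice, however, is reused later (in the proof of Proposition~\ref{prop:rook function asymptotic} and in Section~\ref{sec:3.3}). There, whenever $B\one_N=B^T\one_N=0$, it gives $\sum_iX_i=\sum_jY_j=0$ and $\norm X_2^2=\norm Y_2^2=z^*\Sigma z$, which feed the AM--GM tail bounds. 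Neither property holds for your $Y=\overline W$, so your construction would not support those later estimates.
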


\begin{proof}
    Let $B = U\Sigma V^T$ where $\Sigma=\diag(\sigma_1,\dots,\sigma_r)$ be a singular value decomposition of $B$. Let $z_1,\dots, z_r$ be independent standard complex Gaussian variables. In particular, $\E[z_a\bar z_b]=\delta_{ab}$ and $\E[z_az_b]=0$ hold, where $\delta_{ab}$ is the Kronecker delta. Denote by $z$ the column vector $(z_1,\dots, z_r)^T$, and define $X = U\Sigma^{1/2}z$ and $Y = V\Sigma^{1/2}\bar z$. By construction,
    \[
        \E[XY^T]=U\Sigma^{1/2}\E[z\bar z^T]\Sigma^{1/2}V^T=U\Sigma V^T=B.
    \]
    Moreover, we have $\E[XX^T]=U\Sigma^{1/2}\E[zz^T]\Sigma^{1/2}U^T=0$ and similarly we have $\E[YY^T]=V\Sigma^{1/2}\E[\bar z\bar z^T]\Sigma^{1/2}V^T=0$. It follows immediately that $\E[X_iY_j]=B_{ij}$ and $\E[X_iX_j]=\E[Y_iY_j]=0$. 
    
    To prove the formula for $R_B$, expand the terms in the expectation as
    \[
        \E\left[\prod_{i=1}^N(1+\sqrt sX_i)\prod_{j=1}^N(1+\sqrt sY_j)\right]=\sum_{I,J\subseteq[N]}s^{(|I|+|J|)/2}\E\left[\prod_{i\in I}X_i\prod_{j\in J}Y_j\right].
    \]
    For the Gaussian moment on the right side, we may apply Wick's formula since $\E[X_iX_{i'}]=\E[Y_jY_{j'}]=0$. If $|I|\neq |J|$, then this Gaussian moment vanishes. Otherwise, setting $I = \{i_1,\dots, i_k\}$ and $J = \{j_1,\dots, j_k\}$, we have by Wick's formula that
    \[
        \E\left[\prod_{i\in I}X_i\prod_{j\in J}Y_j\right]=\sum_{\pi\in S_k}\prod_{a=1}^k\E[X_{i_a}Y_{j_{\pi(a)}}]=\sum_{\pi\in S_k}\prod_{a=1}^kB_{i_a, j_{\pi(a)}}=\per(B[I,J]).
    \]
    Therefore,
    \begin{align*}
        \E\left[\prod_{i=1}^N(1+\sqrt sX_i)\prod_{j=1}^N(1+\sqrt sY_j)\right] &= \sum_{k=0}^Ns^k\sum_{\substack{I,J\subseteq[N]\\|I|=|J|=k}}\per(B[I,J]) \\&= \sum_{k=0}^Nr_k(B)s^k=R_B(s),
    \end{align*}
    as desired.   
\end{proof}

Next, we express the permanent $\per D$ as the expectation of $R_B$ evaluated at a random variable.

\begin{lemma}\label{lem:3.4}
    If $D=J_N+B$ is an $N\times N$ matrix and $T$ has the Gamma distribution $\Gamma(N+1,1)$, then
    \[
     \frac{N^N\per D}{N!}=\E[R_B(N/T)],
    \]
    where the expectation is taken with respect to $T$.
\end{lemma}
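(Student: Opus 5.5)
The plan is to expand $\per D = \per(J_N+B)$ by multilinearity in the entries and then use the Gamma moment $\E[T^{-k}]$ to produce the weights $(N-k)!/N!$. The standard expansion of the permanent of a sum, obtained by choosing in each row either the $J_N$ entry or the $B$ entry, reads
\[
    \per(J_N+B)=\sum_{k=0}^N\sum_{\substack{I,J\subseteq[N]\\|I|=|J|=k}}\per(B[I,J])\,\per(J_N[I^c,J^c]).
\]
It comes from grouping the permutations $\sigma$ by the set $I$ of rows where the $B$ entry is used, together with its image $J=\sigma(I)$. The restriction of $\sigma$ to $I\to J$ then contributes $\per B[I,J]$, and the restriction to $I^c\to J^c$ contributes $\per J_N[I^c,J^c]$.

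Since every entry of $J_N$ is $1/N$, we have $\per J_N[I^c,J^c]=(N-k)!/N^{N-k}$. Substituting this gives
\[
    \frac{N^N\per D}{N!}=\sum_{k=0}^N r_k(B)\,\frac{N^k (N-k)!}{N!}.
\]

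It remains to identify $N^k(N-k)!/N!$ with $\E[(N/T)^k]$ for $T\sim\Gamma(N+1,1)$. For $0\le k\le N$,
\[
    \E[T^{-k}]=\frac{1}{N!}\int_0^\infty t^{N-k}e^{-t}\,dt=\frac{(N-k)!}{N!},
\]
which is finite because $N-k\ge0$. Hence $\E[(N/T)^k]=N^k(N-k)!/N!$. Since $R_B(N/T)$ is a finite sum of terms $r_k(B)(N/T)^k$ with $k\le N$, each of which is integrable, linearity of expectation gives $\E[R_B(N/T)]=\sum_k r_k(B)N^k(N-k)!/N!$, and the identity follows.

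There is no real obstacle. The only points to check are the bookkeeping in the expansion (each $\sigma$ is counted exactly once, since $(I,J)$ is determined by the choice of $B$ entries and $J=\sigma(I)$) and that the Gamma moments are finite for $k\le N$, which holds because $N-k\ge 0$. No hypothesis on $D$ beyond being an $N\times N$ matrix is needed, consistent with the statement.
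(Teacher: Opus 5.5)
Your proposal is correct and follows essentially the same route as the paper. You expand $\per(J_N+B)$ by choosing $B$-entries on a row set $I$ with image $J=\sigma(I)$, collect the $(N-k)!$ completions to obtain $\sum_k N^k(N-k)!\,r_k(B)/N!$, and then match the weights with the Gamma moments $\E[T^{-k}]=(N-k)!/N!$; the paper writes the same expansion as $\frac{1}{N^N}\sum_{\pi}\prod_i(1+NB_{i,\pi(i)})$ instead of the permanent-of-a-sum formula, which is only a cosmetic difference.
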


\begin{proof}
    The density of $T$ is $t^Ne^{-t}/N!$, and thus for every $0\le k\le N$ we have
    \[
        \E[T^{-k}]=\frac{(N-k)!}{N!}.
    \]
    By a direct computation,
    \begin{align*}
        \frac{N^N}{N!}\per(J_N+B)&=\frac1{N!}\sum_{\pi\in S_N}\prod_{i=1}^N(1+NB_{i,\pi(i)})\\
        &= \frac1{N!}\sum_{\pi\in S_N}\sum_{k=0}^NN^k\sum_{\substack{J\subseteq [N]\\|J|=k}}\prod_{j\in J}B_{j,\pi(j)}\\
        &= \frac1{N!}\sum_{k=0}^NN^k(N-k)!\sum_{\substack{I,J\subseteq[N]\\|I|=|J|=k}}\per(B[I,J])\\
    \end{align*}\begin{align*}
        \phantom{aaaaaaaaaaa}&= \frac1{N!}\sum_{k=0}^NN^k(N-k)!r_k(B)\\
        &= \sum_{k=0}^NN^k\E[T^{-k}]r_k(B)=\E[R_B(N/T)],
    \end{align*}
    which completes the proof.
\end{proof}

\subsection{Asymptotics of the rook function}\label{sec:3.2}
Every complex symmetric matrix $\Omega$ can be diagonalized as $\Omega= U\Sigma U^T$, where $U$ is unitary and $\Sigma$ is diagonal and nonnegative. Thus $A = U\Sigma^{1/2}$ satisfies $AA^T=\Omega$, $\norm A_\op^2 = \norm \Omega_\op$, and $\ker A^T=\ker\Omega$. In particular, if $g\sim N(0,I)$, then $Ag$ is a centered complex Gaussian vector with $\mathbb E[(Ag)(Ag)^T]=\Omega$. In the following lemma, we will introduce the complex symmetric matrix $K = I+A^TA$. By $K^{-1/2}$ we refer to the inverse square root given by $K^{-1/2}=\sum_{m=0}^\infty\binom{-1/2}m(A^TA)^m$, which converges as $\norm {A^TA}_\op<1$.

\begin{lemma}\label{lem:3.5}
    Let $\Omega=U\Sigma U^T$ be a complex symmetric $d\times d$ matrix with $\norm\Omega_\op<1$. Let $A = U\Sigma^{1/2}$ and $K = I+A^TA$. For $g\sim N(0, I_d)$, define $X = Ag$ and $Y = AK^{-1/2}g$. Then $\mathbb E[XX^T]=\Omega$ and $\mathbb E[YY^T]=\Omega(I+\Omega)^{-1}$. Moreover, for every polynomial $P\in\mathbb C[\xi_1,\dots,\xi_d]$,\footnote{Here and throughout the remainder of this section, $\det(I+\Omega)^{1/2}$ is equal to $1$ for $\Omega = 0$ and defined by analytic continuation elsewhere.}
    \begin{equation*}\label{eq:1}
        \mathbb E\left[P(X)e^{-X^TX/2}\right]=\det(I+\Omega)^{-1/2}\mathbb E[P(Y)]
    \end{equation*}
    and
    \[
        \mathbb E[P(X)]=\det(I+\Omega)^{-1/2}\mathbb E\left[P(Y)e^{Y^TY/2}\right].
    \]
\end{lemma}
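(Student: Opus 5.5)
The plan is to reduce everything to a Gaussian change-of-measure computation in the variable $g$. Throughout write $M = A^TA$, so that $K = I+M$. The covariance claims come first and are immediate:
\[
\E[XX^T] = A\,\E[gg^T]A^T = AA^T = \Omega,
\qquad
\E[YY^T] = AK^{-1}A^T.
\]
Since $K^{-1/2}$ is a power series in $M$, it is symmetric and commutes with $M$. To identify $AK^{-1}A^T$, I will use the push-through identity
\[
A(I+A^TA)^{-1} = (I+AA^T)^{-1}A,
\]
valid because $\norm{M}_\op = \norm{\Omega}_\op<1$, so both inverses exist as Neumann series. This gives $AK^{-1}A^T = (I+\Omega)^{-1}\Omega = \Omega(I+\Omega)^{-1}$.

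For the first integral identity, note that $X^TX = g^TA^TAg = g^TMg$. Hence
\[
\E\left[P(Ag)e^{-g^TMg/2}\right] = (2\pi)^{-d/2}\int P(Ag)\,e^{-g^TKg/2}\,dg .
\]
For real symmetric $K$ close to the identity, the substitution $g = K^{-1/2}h$ turns this into $\det K^{-1/2}\,\E[P(AK^{-1/2}h)]$, which is the claimed right-hand side with determinant factor $\det(I+M)^{-1/2}$. For complex $M$, I will argue by analytic continuation. Replace $A$ by $tA$ for $t$ in a complex neighborhood of $[0,1]$ where $\norm{t^2M}_\op<1$. Both sides are analytic in $t$ and in the entries of $A$: the left side because $\mathrm{Re}(g^TKg)\ge(1-\norm{M}_\op)|g|^2$, which gives absolute convergence, and the right side because $\E[P(AK^{-1/2}g)]$ is a polynomial in the entries of $AK^{-1/2}$. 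Alternatively, one can verify the identity monomial by monomial using the Gaussian integral $\int e^{-g^TKg/2}\,dg = (2\pi)^{d/2}\det K^{-1/2}$ for complex symmetric $K$ with positive definite real part, together with its derivatives in a source term $e^{b^Tg}$. It then remains to note $\det(I+A^TA) = \det(I+AA^T) = \det(I+\Omega)$ by Sylvester's identity, with the branch fixed as in the footnote by continuity from $\Omega=0$.

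For the second identity, I will apply the first one to the function $P(\xi)e^{\xi^T\xi/2}$. This is not a polynomial, so the step needs justification. I would expand $e^{\xi^T\xi/2}$ as a power series, apply the first identity termwise, and justify exchanging sum and expectation by dominated convergence. On the left side the factor $e^{-X^TX/2}e^{X^TX/2}$ cancels exactly, so every partial sum is bounded by $|P(X)|e^{|X|^2}$-type quantities. The real integrability issue is on the right side: we need $\E[|P(Y)|e^{\mathrm{Re}(Y^TY)/2}]<\infty$ and a bound on $\E[e^{|Y|^2/2}\cdots]$.

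This integrability is the main obstacle. $Y^TY = g^TK^{-1/2}MK^{-1/2}g = g^TM(I+M)^{-1}g$, and one has $\norm{M(I+M)^{-1}}_\op \le \rho/(1-\rho)$ with $\rho = \norm{M}_\op$. This bound is below $1$ only when $\rho<1/2$, so controlling $|Y|$ crudely is not enough. Instead I would avoid termwise expansion and argue directly by the same change of variables. Writing $g = K^{1/2}h$ formally, the right side becomes
\[
\det(I+\Omega)^{-1/2}(2\pi)^{-d/2}\int P(Ah)\,e^{h^TMh/2}\,e^{-h^TKh/2}\,\det K^{1/2}\,dh = \E[P(Ah)].
\]
Here the key fact is that the integrand is $e^{-g^T(I-M(I+M)^{-1})g/2} = e^{-g^T(I+M)^{-1}g/2}$ in the $g$ variable. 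The real part of $(I+M)^{-1}$ is positive definite whenever $\norm{M}_\op<1$, since $(I+M)^{-1} = (I+M^*)^{-1}(I+M^*)\ldots$ and $\mathrm{Re}\,\langle (I+M)u,u\rangle>0$. So the integral converges absolutely, and the same analytic-continuation argument as before applies.
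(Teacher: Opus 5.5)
Your proof is correct, but it reaches the identities by a different mechanism than the paper does.

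The paper never substitutes $g=K^{\mp1/2}h$. It uses only the complex Gaussian integral $(2\pi)^{-d/2}\int_{\R^d}e^{-x^TSx/2+v^Tx}\,\mathrm dx=\det(S)^{-1/2}e^{v^TS^{-1}v/2}$, valid for complex symmetric $S$ with positive definite Hermitian part and justified once by analytic continuation. Applying it with $S=K$ and $S=K^{-1}$ gives $\E[e^{u^TX-X^TX/2}]=\det(K)^{-1/2}e^{u^T\Omega(I+\Omega)^{-1}u/2}$ and $\E[e^{u^TY+Y^TY/2}]=\det(K)^{1/2}e^{u^T\Omega u/2}$. The paper then recognizes the exponentials as the moment generating functions of $Y$ and $X$ and applies $P(\partial_u)$ at $u=0$. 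This is the ``source term'' alternative you mention for the first identity. The second identity comes from the same computation, resting on your own observation $e^{-g^Tg/2+Y^TY/2}=e^{-g^TK^{-1}g/2}$.

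You instead prove both identities as literal changes of variables for real $A$ and then continue analytically in $A$.
\begin{itemize}
\item What your route buys: the origin of the identities is transparent, and you explicitly check that $(I+M)^{-1}$ has positive definite real part. The paper uses that hypothesis tacitly when it takes $S=K^{-1}$.
\item What it costs: you must set up holomorphy in the matrix entries, local uniform domination of the integrands, and a branch of $\det(I+A^TA)^{-1/2}$. The paper confines all analytic continuation to the scalar formula above, which it reuses in Lemma~\ref{lem:3.6}.
\end{itemize}

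One point to tighten. Continuation in the scalar $t$ alone cannot reach a complex $A$ from a real case, since $tA$ is still complex for real $t$. The continuation has to be in the entries of $A$. It should go from the real matrices with $\norm{A}_\op<1$, where your substitution is legitimate, to the domain $\{A\in\C^{d\times d}:\norm{A^TA}_\op<1\}$. That domain is star-shaped about $0$, hence connected and simply connected. This gives you the identity theorem. It also gives a square-root branch normalized at $A=0$ that agrees with the footnote's convention along $t\mapsto t^2\Omega$.

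Your abandoned termwise-expansion approach does run into the integrability problem you identify, but your final argument does not need it.
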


\begin{proof}
    First, we have
    \[
        \E[XX^T]=A\E[gg^T]A^T=AA^T=U\Sigma U^T=\Omega.
    \]
    Since $\norm{A^TA}_\op\le\norm A^2_\op<1$, the matrix $K^{-1}$ exists, and hence
    \[
        AK^{-1/2}(AK^{-1/2})^T=A(I+A^TA)^{-1}A^T=\Omega(I+\Omega)^{-1}.
    \]
    Therefore,
    \[
        \E[YY^T]=AK^{-1/2}(AK^{-1/2})^T=\Omega(I+\Omega)^{-1}.
    \]
    Let $S$ be any complex symmetric matrix whose Hermitian part (i.e. $(S+S^*)/2$) is positive definite. For every $v\in \mathbb C^d$, it holds that
    \begin{equation}\label{eq:2}
        (2\pi)^{-d/2}\int_{\R^d}e^{-x^TSx/2+v^Tx}\mathrm dx=\det(S)^{-1/2}e^{v^TS^{-1}v/2}.
    \end{equation}
    
    Indeed, since this is true when $S$ is real positive definite and $v$ is real, the general case follows by analytic continuation in $v$ and $S$. Applying this formula with $S=K$ and then with $S=K^{-1}$ shows that for every $u\in\C^d$,
    \[
        \E[e^{u^TX-X^TX/2}]=\det(K)^{-1/2}e^{u^T\Omega(I+\Omega)^{-1}u/2},
    \]
    \[
        \E[e^{u^TY+Y^TY/2}]=\det(K)^{1/2}e^{u^T\Omega u/2}.
    \]
    By $P(\partial_u)$ we mean replacing each variable in the polynomial $P$ by the corresponding partial derivative with respect to $u$. Applying $P(\partial_u)$ to both identities and evaluating at $u=0$, we obtain
    \begin{align*}
        \E[P(X)e^{-X^TX/2}]&=\det(K)^{-1/2}P(\partial_u)|_{u=0}e^{u^T\Omega(I+\Omega)^{-1}u/2}
        \\&= \det(K)^{-1/2}\E[P(Y)],
    \end{align*}
    where the last equality follows from the moment generating function of the Gaussian $\E[e^{u^TY}]=e^{u^T\Omega(I+\Omega)^{-1}u/2}$. Similarly,
    \[
        \E[P(Y)e^{Y^TY/2}]=\det(K)^{1/2}P(\partial_u)|_{u=0}e^{u^T\Omega u/2}=\det(K)^{1/2}\E[P(X)].
    \]
    These prove the desired identities because $\det(K)=\det(I+A^TA)=\det(I+AA^T)=\det(I+\Omega)$.
\end{proof}

Define an auxiliary function $r$ given by
\[
    r(w)=(1+w)e^{-w+w^2/2}.
\]
Note that in its expansion $r(w)=1+\sum_{d\ge3}a_dw^d$, the first and second order terms vanish.

\begin{lemma}\label{lem:3.6}
    Let $d=2N$ and let $\Gamma(t)$ be a complex symmetric $d\times d$ matrix whose entries are analytic near $t=0$, with $\Gamma(0)=0$. For $t$ sufficiently close to zero, choose complex matrices $L_t$ such that $L_0=0$, $L_tL_t^T=\Gamma(t)$, and $\norm{L_t}_\op\to0$ as $t\to0$. If $Z(t) = L_tg$ where $g\sim N(0,I_d)$, then the function $F(t)$ given by
    \[
        F(t) = \E\left[\prod_{a=1}^dr(Z_a(t))\right]
    \]
    is analytic near zero and independent of the choice of factorization.
\end{lemma}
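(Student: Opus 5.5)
The plan is to expand the product into Gaussian moments and use Wick's formula, so that $F(t)$ depends on $L_t$ only through $\Gamma(t)=L_tL_t^T$. Throughout, $d=2N$ is fixed.

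\textbf{Step 1: the expansion is a finite sum of polynomials in $\Gamma(t)$.} Write $r(w)=1+\sum_{k\ge3}a_kw^k$. This is an entire power series: $r(w)=(1+w)e^{-w+w^2/2}$ with $|a_k|$ decaying factorially-ish. The obstacle is that $r$ grows like $e^{w^2/2}$, so $\E[\prod_a r(Z_a)]$ need not even converge unless $\Gamma(t)$ is small. I would handle this using the smallness of $L_t$. Since $|r(w)|\le (1+|w|)e^{|w|+|w|^2/2}$, we have
\[
\prod_a |r(Z_a)|\le \exp\Big(\sum_a \big(2|Z_a|+|Z_a|^2/2\big)\Big)
\le \exp\Big(C\sqrt d\,\norm{L_t}_\op\norm g_2+d\norm{L_t}_\op^2\norm g_2^2/2\Big).
\]
This is integrable against the Gaussian density once $d\norm{L_t}_\op^2<1/2$, say. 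So for $t$ near $0$ the expectation is finite. Moreover, by dominated convergence with the majorant $\prod_a R(|Z_a|)$, where $R(x)=1+\sum_{k\ge 3}|a_k|x^k$ (bounded similarly), we may expand
\[
F(t)=\sum_{(k_a)}\prod_a a_{k_a}\,\E\Big[\prod_a Z_a(t)^{k_a}\Big],
\]
with $a_0=1$ and $a_1=a_2=0$, summed over multi-indices $(k_a)\in\mathbb Z_{\ge0}^d$.

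\textbf{Step 2: each moment is a polynomial in $\Gamma(t)$.} Here $Z(t)=L_tg$ with $g$ real standard, so the variables $Z_a$ are jointly centered complex Gaussian with $\E[Z_aZ_b]=\Gamma(t)_{ab}$. Wick's formula (only unconjugated variables appear) then gives
\[
\E\Big[\prod_a Z_a^{k_a}\Big]=\sum_{\pi}\prod_{\{x,y\}\in\pi}\Gamma(t)_{a(x)a(y)}.
\]
Here $\pi$ runs over pairings of the multiset in which index $a$ appears $k_a$ times, and $a(x)$ denotes the index of element $x$. This is a homogeneous polynomial of degree $\tfrac12\sum_a k_a$ in the entries of $\Gamma(t)$. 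Hence each term is analytic in $t$ and independent of the factorization, which settles independence provided the series converges.

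\textbf{Step 3: uniform convergence and analyticity.} For analyticity I would bound the series uniformly on a small complex disc $|t|\le\varepsilon$. Let $\gamma=\sup_{|t|\le\varepsilon}\norm{\Gamma(t)}_\mx$. The pairing count gives
\[
\Big|\E\Big[\prod_a Z_a^{k_a}\Big]\Big|\le (m-1)!!\,\gamma^{m/2},\qquad m=\sum_a k_a .
\]
Therefore the series is dominated by
\[
\sum_{(k_a)}\prod_a|a_{k_a}|\,(m-1)!!\,\gamma^{m/2}\;\le\;\sum_m (m-1)!!\,\gamma^{m/2}\,[x^m]\,R(x)^d .
\]
Since $R(x)\le (1+x)e^{x+x^2/2}$ coefficientwise, we get $[x^m]R(x)^d\le c^m/\Gamma(m/2+1)$ roughly, and $(m-1)!!\approx (m/2)!\,2^{m/2}$. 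This yields a geometric series in $m$ with ratio of order $c\sqrt{2\gamma}$, which is less than $1$ once $\varepsilon$ is small because $\Gamma(0)=0$.

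A locally uniformly convergent series of analytic functions is analytic, so $F$ is analytic near $0$. The same series equals the expectation for every factorization with $\norm{L_t}_\op$ small, which gives independence of the choice of factorization.

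\textbf{Main obstacle.} The hard part is the coefficient estimate for $R(x)^d$ against $(m-1)!!$, i.e.\ controlling the $e^{w^2/2}$ growth. If the crude bound fails, I would instead regroup $\E[\prod_a r(Z_a)]$ as $\E\big[\prod_a(1+Z_a)e^{-Z_a}\cdot e^{Z^TZ/2}\big]$. I would then apply a Gaussian integral in the spirit of Lemma~\ref{lem:3.5}, specifically \eqref{eq:2}, to obtain an explicit closed form involving $\det(I-\Gamma(t))^{-1/2}$ times a polynomial expectation. That closed form is manifestly analytic and depends only on $\Gamma(t)$.
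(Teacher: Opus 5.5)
Your argument is correct, but your main route differs from the paper's; your fallback is exactly what the paper does. The paper never expands in moments. It computes the generating function $H(t,u)=\E[\exp(u^TZ(t)+\tfrac12 Z(t)^TZ(t))]$ in closed form via the Gaussian integral \eqref{eq:2}, obtaining
\[
H(t,u)=\det(I-\Gamma(t))^{-1/2}\exp\bigl(\tfrac12u^T\Gamma(t)(I-\Gamma(t))^{-1}u\bigr).
\]
It then recovers $F(t)=\prod_a(1+\partial_{u_a})H(t,u)|_{u=-\one_d}$, and reads off analyticity and independence of the factorization from this formula.

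Your route instead writes $F(t)$ as an absolutely convergent Wick expansion whose terms are polynomials in the entries of $\Gamma(t)$. This gives independence immediately, already from Steps 1--2 for each fixed $t$. Analyticity then follows from local uniform convergence.

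The coefficient estimate you flagged as the main obstacle does close, and more cleanly than via asymptotics. Since $(m-1)!!=\E[G^m]$ for a standard real Gaussian $G$ and even $m$, Tonelli bounds your dominating series by
\[
\E\bigl[R(\sqrt\gamma\,|G|)^d\bigr]\le\E\bigl[(1+\sqrt\gamma|G|)^de^{d\sqrt\gamma|G|+d\gamma G^2/2}\bigr],
\]
which is finite once $d\gamma<1$.

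The paper's proof is shorter and produces an explicit formula. However, it relies on analytic continuation of a complex Gaussian integral and a choice of square-root branch. It also differentiates under the expectation, which needs the same kind of integrability check you carry out in Step 1.

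Your proof is elementary and avoids branch issues. It also directly yields the Wick expansion of $F$ that the paper needs anyway in Lemma~\ref{lem:3.7}, where it is justified only indirectly through the auxiliary $F_\lambda$ and the closed form. The radius of convergence you obtain, of order $1/d$ in the entrywise norm, depends on $N$. That is harmless for this qualitative, fixed-$N$ statement.
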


\begin{proof}
    Let $g\sim N(0,I_d)$, and let $Z(t) = L_tg$. Since $\norm{L_t}_\op\to0$ as $t\to0$, we may assume $\norm{L_t}_\op<1$ for $|t|$ sufficiently small. For $u\in\C^d$, define
    \[
        H(t,u)=\E\left[\exp\left(u^TZ(t)+\frac12Z(t)^TZ(t)\right)\right].
    \]
    Since $I-L_t^TL_t$ has positive definite real part, \eqref{eq:2} implies
    \begin{align*}
        H(t,u) &= \frac1{(2\pi)^{d/2}}\int_{\R^d}\exp\left(-\frac12x^T(I-L_t^TL_t)x+u^TL_tx\right)\mathrm dx \\
        &= \det(I-L_t^TL_t)^{-1/2}\exp\left(\frac12u^TL_t(I-L_t^TL_t)^{-1}L_t^Tu\right) \\
        &= \det(I-\Gamma(t))^{-1/2}\exp\left(\frac12u^T\Gamma(t)(I-\Gamma(t))^{-1}u\right),
    \end{align*}
    where the last equality follows from $L_t(I-L_t^TL_t)^{-1}L_t^T=\Gamma(t)(I-\Gamma(t))^{-1}$, and the square root is taken in the analytic branch that is equal to $1$ at $t=0$.

    Now recall that $r(w) = (1+w)e^{-w+w^2/2}$. By differentiating under the expectation, we have
    \[
        F(t)=\E\left[\prod_{a=1}^dr(Z_a(t))\right]=\left.\prod_{a=1}^d(1+\partial_{u_a})H(t,u)\right|_{u=-\one_d}.
    \]
    Since $\Gamma(t)$ is analytic and $\Gamma(0)=0$, we know $H(t,u)$ is analytic in $(t,u)$ near $t=0$, and hence $F(t)$ is analytic near $t=0$. Moreover, we see that $F$ is given by $H$, which in turn is given by $\Gamma(t)=L_tL_t^T$. It follows that $F(t)$ is independent of the choice of $L_t$, as desired.
\end{proof}

\begin{lemma}\label{lem:3.7}
    Assume the same hypotheses as in Lemma \ref{lem:3.6}. Suppose in addition that for some constants $A$ and $\theta\in[0,1)$,
    \begin{equation}\label{eq:3}
        \max_{1\le i,j\le2N}|[t^m]\Gamma_{ij}(t)|\le\frac AN\theta^{m-1}
    \end{equation}
    holds for every $m\ge1$. Then we can write $F(t)=1+\sum_{k\ge1}f_kt^k$, and there exists a constant $C<\infty$ depending only on $A$, such that for every $k\ge1$,
    \begin{equation}\label{eq:4}
        |f_k|\le\sum_{E=1}^k\sum_{1\le v\le\lfloor 2E/3\rfloor}C^E\left(\frac EN\right)^{E-v}\binom{k-1}{E-1}\theta^{k-E}.
    \end{equation}
\end{lemma}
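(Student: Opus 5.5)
Expand each factor $r(Z_a(t)) = 1+\sum_{d\ge3}a_dZ_a(t)^d$ and multiply out the product over $a\in[2N]$. This gives $F(t)$ as a sum over multisets of vertices $a$, each carrying a degree $d_a\ge3$, of $\prod_a a_{d_a}\cdot\E[\prod_a Z_a(t)^{d_a}]$. By Lemma \ref{lem:3.6}, $F$ depends only on $\Gamma(t)$, so we may evaluate each Gaussian moment with Wick's formula using $\E[Z_aZ_b]=\Gamma_{ab}(t)$. Each term then becomes a sum over perfect matchings of the $\sum_a d_a$ half-edges, i.e.\ over multigraphs $H$ on the chosen vertex set. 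In $H$ every vertex has degree at least $3$, loops are allowed, and each edge $\{a,b\}$ carries weight $\Gamma_{ab}(t)$. Let $v$ be the number of vertices of $H$ and $E$ the number of edges. The minimum degree condition gives $2E\ge 3v$, so $v\le\lfloor 2E/3\rfloor$. Also $v\ge1$ for any nonconstant term, and $E\ge1$ since $\Gamma(0)=0$.

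Next extract $[t^k]$. Each edge contributes $\Gamma_{ab}(t)=\sum_{m\ge1}[t^m]\Gamma_{ab}\,t^m$, so we choose exponents $m_e\ge1$ with $\sum_e m_e=k$. There are $\binom{k-1}{E-1}$ such compositions. By \eqref{eq:3}, each choice contributes at most $(A/N)^E\theta^{k-E}$ in absolute value. This already produces the factors $\binom{k-1}{E-1}\theta^{k-E}$ in \eqref{eq:4}, so it remains to show that the combinatorial count of configurations with $E$ edges and $v$ vertices is at most $C^E N^{v}E^{E-v}$. Multiplied by $(A/N)^E$, this gives $C^E(E/N)^{E-v}$, after absorbing $A^E$ into $C^E$.

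For the counting, proceed in three steps.
\begin{enumerate}[label=(\roman*)]
\item Choose the vertex labels in $[2N]$: at most $(2N)^v/v!$ ways up to symmetry, or simply $(2N)^v$ with the labelled structure counted separately.
\item Choose the degrees $d_a\ge3$ summing to $2E$: at most $2^{2E}$ ways.
\item Choose the coefficients: the Taylor coefficients of the entire function $r$ satisfy $|a_d|\le C_0^d/\lfloor d/3\rfloor!$, or at least $|a_d|\le C_0^d$.
\end{enumerate}
Finally, count the perfect matchings of $2E$ half-edges, $(2E-1)!!\le 2^E E^E$. The crude bound $E^E$ is too big; we need $E^{E-v}$ times $v!$-type savings. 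The fix is to pair the $(2E-1)!!$ with the $a_d$ decay. Since $r(w)=(1+w)e^{-w+w^2/2}$, one has $|a_d|\lesssim C_0^d/\Gamma(d/2+1)$-type decay, because $e^{w^2/2}$ dominates. The quantity $\prod_a |a_{d_a}|\cdot(2E-1)!!$ should then be bounded by $C^E\prod_a d_a!^{1/2}\cdot(\ldots)$. I expect $(2E-1)!!/\prod_a (d_a/2)!\le C^E E^{E-v}$: we have $\prod(d_a/2)!\ge ((2E/v)/2)!^{v}$ roughly, and $E^E/((E/v)^{E})\cdot$ corrections $=v^E$, but also $v^E\le C^E E^{E-v}\cdot(\text{stuff})$. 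Verifying this inequality is routine Stirling bookkeeping. The $1/v!$ from unordered vertex sets helps too, since $N^v/v!$ together with $v^E$ yields $E^{E-v}$-like terms.

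The main obstacle is exactly this last step: getting the moment count (matchings times coefficients $a_d$) down to $C^E E^{E-v}N^v$ rather than $C^EE^EN^v$. The cleanest route I would try first is to avoid bounding $a_d$ individually. Instead, bound the contribution of all Wick graphs with a given vertex set and $E$ edges by a Cauchy integral. Write $\E[\prod_{a\in V}(r(Z_a)-1)]$ restricted to $E$ edges as a coefficient in the generating function $H(t,u)$ of Lemma \ref{lem:3.6}, with $\Gamma$ replaced by $s\Gamma$. Then extract $[s^E]$ via Cauchy's estimate on a circle of radius $\asymp N/E$. On that circle, $\norm{s\Gamma}$ is $O(1)$ on the relevant block, and the $v$ vertex factors give an $O(1)^{E}$ bound. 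Cauchy then yields $(E/N)^E$ times $C^E$, and the $N^v$ label choices give $(E/N)^{E-v}$ up to $E^v$. That residual $E^v$ is absorbed using $v\le 2E/3$ and the $1/v!$ from unordered vertex sets.
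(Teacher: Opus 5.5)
Your overall architecture matches the paper's. You expand $\prod_a r(Z_a)$, apply Wick's formula, use $d_a\ge3$ to get $3v\le 2E$, and extract $[t^k]$ through compositions to get $\binom{k-1}{E-1}(A/N)^E\theta^{k-E}$. The gap is at the step you yourself call the main obstacle. You never prove that $\binom{2N}{v}(2E-1)!!\sum_{d_1+\dots+d_v=2E,\ d_i\ge3}\prod_i|a_{d_i}|\le C^EN^vE^{E-v}$, and what you write there does not work as stated.

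The inequality you say you expect, $(2E-1)!!/\prod_a(d_a/2)!\le C^EE^{E-v}$, is false. For $d_a\equiv3$ and $v=2E/3$ the left side is $e^{O(E)}E^E$, while the right side is $C^EE^{E/3}$. It needs the extra $1/v!$, which you mention only in passing and never assemble into a verified bound.

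Your preferred Cauchy-integral route has a different problem: it estimates the \emph{signed} sum of all $E$-edge Wick graphs through $H$. The argument for \eqref{eq:4} bounds every pairing's $t$-coefficients in absolute value via \eqref{eq:3}. So the quantity that must be controlled is $\sum_d\prod_i|a_{d_i}|$ itself, and any contour estimate has to be applied to a majorant with nonnegative coefficients. That is exactly the paper's device. With $G(z)=\sum_{d\ge3}|a_d|z^d$, it shows $G(R)\le(1+R)e^Re^{R^2/2}\le e^{3R^2}$. Taking $R=\sqrt{E/v}$ in $[z^{2E}]G(z)^v\le G(R)^vR^{-2E}$ gives $e^{3E}(v/E)^E$, and hence $\binom{2N}{v}(2E-1)!![z^{2E}]G(z)^v\le C_1^EN^vv^{E-v}$.

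The fix on your side is easier than you think: your claim that the crude bound is too big is mistaken once you keep the $1/v!$. That factor is genuinely present, because the expansion runs over subsets $S\subseteq[2N]$, so labels cost $\binom{2N}{v}\le(2N)^v/v!$ rather than $(2N)^v$. Combine this with three crude bounds:
\begin{itemize}
\item $B:=\max(1,\sup_d|a_d|)<\infty$, since $r$ is entire;
\item there are at most $4^E$ degree sequences;
\item $(2E-1)!!\le(2E)^E$.
\end{itemize}
The weighted count is then at most $(16B)^EN^vE^E/v!$, and $E^v/v!\le e^E$ turns this into $(16eB)^EN^vE^{E-v}$. That is all \eqref{eq:4} needs, with $C$ depending only on $A$ after multiplying by $(A/N)^E$. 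The paper's saddle-point bound gives the sharper $v^{E-v}$, but it relaxes this to $E^{E-v}$ anyway.

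Separately, you expand $\prod_ar(Z_a)$ termwise under $\E$ and commute this with $[t^k]$ without justification. This is not automatic, since $r$ contains the factor $e^{w^2/2}$. The paper obtains it from joint analyticity of $F_\lambda(t)=\E[\prod_ar(\lambda Z_a(t))]$ for $|\lambda|<2$. Then $F(t)=\sum_m[\lambda^m]F_\lambda(t)$ converges locally uniformly in $t$, and each $[\lambda^m]F_\lambda$ is a finite sum of Gaussian moments. Only total degrees $2E\le2k$ then contribute to $f_k$.
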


\begin{proof}
    We begin by justifying the expansion $F(t)=1+\sum_{k\ge1}f_kt^k$. Define the function $F_\lambda(t)=\E\left[\prod_{a=1}^{2N}r(\lambda Z_a(t))\right]$. Since $\norm{L_t}_\op\to0$, we may assume $\norm{L_t}_\op<1/4$ for $|t|$ sufficiently small. Applying the preceding argument with $L_t$ replaced by $\lambda L_t$ shows that $F_\lambda(t)$ is jointly analytic in $(\lambda,t)$ for $|\lambda|<2$ and $|t|$ sufficiently small. Consequently, its Taylor series in $\lambda$ converges locally uniformly in $t$ at $\lambda=1$.

    Now let $r(w) = 1+\sum_{d\ge3}a_dw^d$. By a straightforward expansion,
    \begin{equation}\label{eq:5}
        \prod_{a=1}^{2N}r(\xi_a)=\sum_{S\subseteq[2N]}\sum_{\substack{d_i\ge3\\i\in S}}\left(\prod_{i\in S}a_{d_i}\right)\prod_{i\in S}\xi_i^{d_i}.
    \end{equation}
    For each fixed total degree $m$, differentiating $F_\lambda(t)$ at $\lambda=0$ gives
    \[
        [\lambda^m]F_\lambda(t)=\sum_{S\subseteq[2N]}\sum_{\substack{d_i\ge 3~(i\in S)\\\sum_{i\in S}d_i=m}}\left(\prod_{i\in S}a_{d_i}\right)\E\left[\prod_{i\in S}Z_i(t)^{d_i}\right].
    \]
    By Wick's formula, the odd moments vanish, and a moment of total degree $2E$ is a polynomial of degree $E$ in the entries of $\Gamma(t)$, and hence is $O(t^E)$ since $\Gamma(0)=0$. The local uniform convergence of the expansion $F(t)=1+\sum_{k\ge1}f_kt^k$ established above therefore implies
    \[
        f_k=\sum_{E=1}^k\sum_{S\subseteq[2N]}\sum_{\substack{d_i\ge3~(i\in S)\\\sum_{i\in S}d_i=2E}}\left(\prod_{i\in S}a_{d_i}\right)[t^k]\E\left[\prod_{i\in S}Z_i(t)^{d_i}\right].
    \]
    For each monomial $\prod_{i\in S}Z_i(t)^{d_i}$ appearing in an expectation, let $v = |S|$, and list its $2E$ factors as $Z_{j_1}(t),\dots, Z_{j_{2E}}(t)$, with repeated indices allowed. Since $d_i \ge 3$ for every $i$, we have $3v\le 2E$. As $\E[Z_i(t)Z_j(t)]=\Gamma_{ij}(t)$, Wick's formula gives
    \[
        \E\left[\prod_{i\in S}Z_i(t)^{d_i}\right]=\sum_{\pi\in\mathcal P_2([2E])}\prod_{\{a,b\}\in\pi}\Gamma_{j_aj_b}(t).
    \]
    Thus, each pairing contributes a product of $E$ entries of $\Gamma(t)$. In the following steps, we first estimate the coefficient of $t^k$ in one such product, then estimate the product of the $a_i$'s, and finally combine these estimates to bound $f_k$.

    First consider the product $\prod_{\ell=1}^E\Gamma_{p_\ell q_\ell}(t)$. Since $\Gamma(0)=0$, every factor contributes a positive power of $t$. Hence
    \[
        [t^k]\prod_{\ell=1}^E\Gamma_{p_\ell q_\ell}(t)=\sum_{\substack{m_1,\dots, m_E\ge1\\m_1+\dots+m_E=k}}\prod_{\ell=1}^E[t^{m_\ell}]\Gamma_{p_\ell q_\ell}(t).
    \]
    In particular, this coefficient vanishes unless $k\ge E$. By \eqref{eq:3}, 
    \begin{align}\label{eq:6}
        \left|[t^k]\prod_{\ell=1}^E\Gamma_{p_\ell q_\ell}(t)\right|&\le\sum_{\substack{m_1,\dots, m_E\ge1\\ m_1+\dots+m_E=k}}\left(\frac AN\right)^E\theta^{\sum_{\ell=1}^E(m_\ell-1)}\nonumber\\
        &=\binom{k-1}{E-1}\left(\frac AN\right)^E\theta^{k-E}.
    \end{align}

    Next, we estimate the sum of the product of the $|a_i|$'s. To this end, define $G(z)=\sum_{d\ge3}|a_d|z^d$. For a fixed $v$ and $E$,
    \[
        [z^{2E}]G(z)^v = \sum_{\substack{d_1,\dots, d_v\ge3\\d_1+\dots+d_v=2E}}|a_{d_1}\cdots a_{d_v}|.
    \]
    For a power series $g(w) = \sum_{d\ge 0}g_dw^d$, we will define $\norm g_R=\sum_{d\ge0}|g_d|R^d$. Then $\norm{gh}_R\le \norm g_R\cdot\norm h_R$ holds, and therefore
    \begin{align*}
        G(R) &\le \sum_{d\ge0}|[w^d]r(w)|R^d\\
        &\le\norm{1+w}_R\cdot\norm{e^{-w}}_R\cdot\norm{e^{w^2/2}}_R\\
        &= (1+R)\left(\sum_{k\ge0}\frac{R^k}{k!}\right)\left(\sum_{k\ge0}\frac{R^{2k}}{2^kk!}\right)\\
        &= (1+R)e^Re^{R^2/2}.
    \end{align*}
    Since $R\ge 1$, we have $\log(1+R)+R+\frac{R^2}2\le3R^2$. Hence
    \[
        G(R)\le(1+R)e^Re^{R^2/2}\le e^{3R^2}.
    \]
    Since $G$ has nonnegative coefficients, $[z^{2E}]G(z)^v\le G(R)^v/R^{2E}$. Taking $R = \sqrt{E/v}\ge\sqrt{3/2}$ and using the previous two inequalities, it follows that $[z^{2E}]G(z)^v\le e^{3E}(v/E)^E$, which implies
    \begin{align}
        \binom{2N}v(2E-1)!![z^{2E}]G(z)^v &\le\frac{(2N)^v}{v!}(2E)^Ee^{3E}\left(\frac vE\right)^E\nonumber\\
        &\le C_1^E N^v v^{E-v}\le C_1^EN^v E^{E-v},\label{eq:7}
    \end{align}
    where we used basic inequalities for factorials and the fact that $3v\le 2E$. Here $C_1$ is an absolute constant.

    To conclude, we combine the previous bounds in order to prove the desired bound on $f_k$. Fix some $k\ge 1$. For $v\ge 1$ and $E\ge 1$, define $L_{v,E}$ to be
    \[
        L_{v,E} = \sum_{\substack{S\subseteq[2N]\\|S|=v}}\sum_{\substack{d_i\ge3~(i\in S)\\\sum_{i\in S}d_i=2E}}\left(\prod_{i\in S}|a_{d_i}|\right)\left|[t^k]\E\left[\prod_{i\in S}Z_i(t)^{d_i}\right]\right|.
    \]
    By the termwise expansion \eqref{eq:5} and the vanishing of the coefficient when $E>k$,
    \begin{equation}\label{eq:8}
        |f_k|\le\sum_{E=1}^k\sum_{1\le v\le\min\{2N,\lfloor 2E/3\rfloor\}}L_{v,E}.
    \end{equation}
    Using \eqref{eq:6} for each pairing and summing over the degree choices, we have
    \begin{align*}
        L_{v,E}&\le\binom{2N}v(2E-1)!![z^{2E}]G(z)^v\binom{k-1}{E-1}\left(\frac AN\right)^E\theta^{k-E}\\
        &\le (C_1A)^E\left(\frac EN\right)^{E-v}\binom{k-1}{E-1}\theta^{k-E},
    \end{align*}
    where the last inequality follows from \eqref{eq:7}. Summing this bound in \eqref{eq:8} and enlarging the range of $v$ to $1\le v\le\lfloor 2E/3\rfloor$ proves \eqref{eq:4} with the constant $C = C_1A$.
\end{proof}

We may now prove the following asymptotics for the rook polynomial.

\begin{proposition}\label{prop:rook function asymptotic}
    Assume the hypotheses of Proposition \ref{prop:permanent bound} and $\rho>0$. Let $I = [a,b]\subset(0,\infty)$ be a compact interval containing $1$ in its interior and satisfying $b<\rho^{-1}$. Then for every $s\in I$, we have $R_B(s)=(1+o(1))\cdot\det(I-s^2B^TB)^{-1/2}$.
\end{proposition}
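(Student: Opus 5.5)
We will start from the Gaussian representation of Lemma \ref{lem:3.3} and factor out the Gaussian weight using Lemma \ref{lem:3.5}. Concretely, stack $W=(\sqrt s X,\sqrt s Y)\in\C^{2N}$. This is a centered complex Gaussian vector whose symmetric covariance is
\[
    \Omega_s=\E[WW^T]=s\begin{pmatrix}0&B\\B^T&0\end{pmatrix}.
\]
It satisfies $\norm{\Omega_s}_\op=s\norm B_\op\le b\rho<1$ on $I$. We then write $1+w=r(w)e^{w-w^2/2}$ for each coordinate, which gives
\[
    \prod_a(1+W_a)=\prod_a r(W_a)\cdot e^{\one^TW}e^{-W^TW/2}.
\]
Since $D$ is doubly stochastic, $B\one=B^T\one=0$, so $\E[(\one^TW)^2]=\one^T\Omega_s\one=2s\,\one^TB\one=0$. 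We will absorb $e^{\one^TW}$ by the Gaussian shift formula \eqref{eq:2}: the linear term $v^TS^{-1}v$ produced involves $\one^T\Omega_s(I+\Omega_s)^{-1}\one$, and this vanishes because $\Omega_s\one=0$. Applying the first identity of Lemma \ref{lem:3.5} (extended from polynomials to $\prod r(\cdot)\,e^{\one^T\cdot}$ by the same analytic argument as in Lemma \ref{lem:3.6}) then yields
\[
    R_B(s)=\det(I+\Omega_s)^{-1/2}\,\E\Big[\prod_{a=1}^{2N}r(Y'_a)\Big],
\]
where $Y'$ has covariance $\Gamma=\Omega_s(I+\Omega_s)^{-1}$. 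Moreover $\det(I+\Omega_s)=\det(I-s^2B^TB)$. We should double-check the sign convention: the identity could instead produce $\Omega_s(I-\Omega_s)^{-1}$, but either way the determinant prefactor is the claimed one.

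It then remains to show that $F=\E[\prod r(Y'_a)]=1+o(1)$ uniformly for $s\in I$. For this we introduce a parameter $t$ via $\Gamma(t)=t\,\Omega_s(I+t\Omega_s)^{-1}=\sum_{m\ge1}(-1)^{m-1}t^m\Omega_s^m$ and evaluate $F(t)$ at $t=1$ using Lemmas \ref{lem:3.6} and \ref{lem:3.7}. To verify hypothesis \eqref{eq:3}, we note that for $m=1$ the entries of $\Omega_s$ are at most $bA_0/N$. For $m\ge2$, we bound $|(\Omega^m)_{ij}|=|e_i^T\Omega\,\Omega^{m-2}\,\Omega e_j|\le\norm{\Omega e_i}_2\norm{\Omega}_\op^{m-2}\norm{\Omega e_j}_2$. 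Here each $\norm{\Omega e_i}_2\le bA_0/\sqrt N$, since each row has $N$ entries of size $O(1/N)$. This gives $|(\Omega^m)_{ij}|\le (bA_0)^2N^{-1}\theta^{m-2}$ with $\theta=b\rho<1$. So \eqref{eq:3} holds with $A=O(A_0^2/\theta+A_0)$, depending only on $A_0,\rho,b$.

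Lemma \ref{lem:3.7} then bounds $|F(1)-1|\le\sum_k|f_k|$, and the main obstacle is to sum the bound \eqref{eq:4} over $k$ and show the result is $o(1)$ uniformly. We plan to swap the order of summation and sum over $k\ge E$ first, using $\sum_{k\ge E}\binom{k-1}{E-1}\theta^{k-E}=(1-\theta)^{-E}$. This leaves
\[
    \sum_{E\ge1}\sum_{v\le 2E/3}\big(C/(1-\theta)\big)^E(E/N)^{E-v}.
\]
Since $E-v\ge E/3$, each term is at most $\big(C'(E/N)^{1/3}\big)^E$ up to a factor $E$. The sum over $E\le\varepsilon N$ for small $\varepsilon$ is then $O(N^{-1/3})$, dominated by $E=1$ to $3$ where $E-v\ge1$.

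The large-$E$ range ($E$ comparable to $N$) is not controlled by this crude bound, and handling it is the delicate step. We would cap it using the fact that $v\le2N$ and that $F$ is analytic on a disc of radius larger than $1$: pick $t_0=(1+\theta^{-1})/2>1$ with $t_0\theta<1$, bound $|F|$ on $|t|=t_0$ by an $N$-uniform Gaussian estimate, and get geometric decay $t_0^{-k}$ of $f_k$ for large $k$. If this fails, the fallback is to redo \eqref{eq:7} keeping the $\binom{2N}{v}\le(2N)^v/v!$ factor more carefully. Finally, uniformity in $s\in I$ follows because every constant depends only on $A_0,\rho,a,b$.
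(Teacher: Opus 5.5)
Your reduction to $F=\E[\prod_a r(Y'_a)]$, with covariance $\Omega_s(I+\Omega_s)^{-1}$ and prefactor $\det(I-s^2B^TB)^{-1/2}$, is correct and follows the paper. Your shift argument for $e^{\one^TW}$ works, though it is quicker to note that $\one^TY'=0$ almost surely: in the realization of Lemma \ref{lem:3.5}, $Y'$ lies in the range of $A$ and $\ker A^T=\ker\Omega_s\ni\one$. Your verification of \eqref{eq:3} and your summation of \eqref{eq:4} over $k\ge E$ for $E\le\varepsilon N$ are also fine.

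The genuine gap is the step you flag as delicate, and it is where most of the work lies. First, the split must be made in $k$, not in $E$. A Cauchy estimate bounds $|f_k|$ as a whole and cannot isolate its large-$E$ part, so use \eqref{eq:4} only for $k\le k_0$ (which forces $E\le k_0$) and Cauchy for $k>k_0$. Second, and more seriously, you need a concrete bound on $M=\max_{|t|=t_0}|F(t)|$ with $Mt_0^{-k_0}\to0$. Since \eqref{eq:4} is only summable for $k_0\le\varepsilon N$ with $\varepsilon$ small in terms of $A_0,\rho,b$, this requires $M$ to be genuinely subexponential. Crude bounds such as $|r_k(B)|\le\binom{N}{k}^2k!\,(A_0/N)^k$ only give $M\le e^{cN}$ with $c$ of order $A_0bt_0$, which is not good enough in general. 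You give no estimate for $M$, and the ``$N$-uniform'' bound you aim for is not what any direct Gaussian estimate yields (nor do you need it). Your fallback cannot work either. For $E>N$ the factor $(E/N)^{E-v}\ge(E/N)^{E/3}$ in \eqref{eq:4} grows superexponentially, so the termwise Wick bound diverges however carefully \eqref{eq:7} is redone; it discards all cancellation.

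The missing ingredient is the paper's $e^{C\sqrt N}$ bound on the circle. For $|t|\le R$ with $bR\rho<1$, use the representation of Lemma \ref{lem:3.3}, $R_{tB}(s)=\E[\prod_i(1+\sqrt{st}X_i)\prod_j(1+\sqrt{st}Y_j)]$, together with $\sum_iX_i=\sum_jY_j=0$ almost surely (from $B\one=B^T\one=0$). AM--GM then gives $\prod_i|1+\sqrt{st}X_i|\le(1+s|t|\norm{X}_2^2/N)^{N/2}$, because the zero sum kills the cross term. Hence $|R_{tB}(s)|\le\E[(1+sRq/N)^N]\le\E[e^{sRq}]=\prod_\ell(1-sR\sigma_\ell)^{-1}\le e^{C\sum_\ell\sigma_\ell}\le e^{C\sqrt N\norm B_F}\le e^{C\sqrt N}$, where $q=z^*\Sigma z$. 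Meanwhile $|\det(I-s^2t^2B^TB)^{1/2}|\le e^{s^2R^2\norm B_F^2/2}=O(1)$. Therefore $|f_k(s)|\le e^{C\sqrt N+C}R^{-k}$, which sums to $o(1)$ over $k>K\sqrt N$ once $K\log R>C$; your threshold $\varepsilon N$ would also work. Your small-$k$ computation then covers the remaining range.
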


\begin{proof}
    Let $F_s(t) = R_{tB}(s)\det(I-s^2t^2B^TB)^{1/2}$, so that it suffices to show $F_s(1)=1+o(1)$ holds uniformly for $s\in I$. The first step of the proof involves expressing $F_s(t)$ as an expectation involving a Gaussian vector, whose covariance can be computed explicitly. This reformulation will allow us to estimate the coefficients of $F_s(t)$ using Wick's formula. For $s\in I$, define the matrix
    \[
        \Omega_{s,t}=\begin{bmatrix}0&stB\\stB^T&0\end{bmatrix}.
    \]
    Fix some $t\in\C$ satisfying $|st|\norm B_\op<1$. As in Lemma \ref{lem:3.5}, we diagonalize $\Omega_{s,t}=U\Sigma U^T$ and set $A_{s,t}=U\Sigma^{1/2}$. Let $K_{s,t}=I+A_{s,t}^TA_{s,t}$, and for $g\sim N(0,I_{2N})$, define $X_{s,t}=A_{s,t}g$ and $Y_{s,t}=A_{s,t}K_{s,t}^{-1/2}g$. Then $\E[X_{s,t}X_{s,t}^T]=\Omega_{s,t}$ and $\E[Y_{s,t}Y_{s,t}^T]=\Omega_{s,t}(I+\Omega_{s,t})^{-1}$. By the definition of $\Omega_{s,t}$ and Wick's formula,
    \[
        \E\left[\prod_{a=1}^{2N}(1+X_{s,t,a})\right]=\sum_{k=0}^N(st)^k\sum_{\substack{I_0,J_0\subseteq[N]\\|I_0|=|J_0|=k}}\per B[I_0,J_0] = R_{tB}(s).
    \]
    Since $B\one_N=B^T\one_N=0$, we have $\Omega_{s,t}\one_{2N}=0$. By construction, $\ker A_{s,t}^T=\ker\Omega_{s,t}$, so $\one_{2N}^TY_{s,t}=0$ almost surely. For the polynomial $P(\xi)=\prod_{a=1}^{2N}(1+\xi_a)$, we have
    \[
        P(Y_{s,t})e^{Y_{s,t}^TY_{s,t}/2}=\prod_{a=1}^{2N}r((Y_{s,t})_a).
    \]
    The second identity in Lemma \ref{lem:3.5} therefore yields
    \[
        R_{tB}(s)=\det(I+\Omega_{s,t})^{-1/2}\E\left[\prod_{a=1}^{2N}r((Y_{s,t})_a)\right].
    \]
    Using Schur complement, one can now show that $\det(I+\Omega_{s,t})=\det(I-s^2t^2B^TB)$. Combining this with the above, we have
    \[
        F_s(t)=\E\left[\prod_{a=1}^{2N}r((Y_{s,t})_a)\right].
    \]
    Finally, setting $q = \norm{\Omega_{s,t}}_\op=|st|\norm B_\op$, observe that expanding $K_{s,t}^{-1/2}$ using binomial series implies that as $t\to0$,
    \begin{align*}
        \norm{A_{s,t}K_{s,t}^{-1/2}}_\op^2&=\left\|A_{s,t}\sum_{m=0}^\infty\binom{-1/2}m(A_{s,t}^TA_{s,t})^m\right\|_\op^2\\
        &\le q\left(\sum_{m=0}^\infty\left|\binom{-1/2}m\right|q^m\right)^2=\frac q{1-q}\longrightarrow0.
    \end{align*}
    Thus $A_{s,t}K_{s,t}^{-1/2}g$ has precisely the form needed to apply Lemma \ref{lem:3.6}, which will be used in the next step.

    The next step will involve estimating the coefficients in the expansion of the entries of $\widetilde\Omega_{s,t}=\Omega_{s,t}(I+\Omega_{s,t})^{-1}$. Note that $\widetilde\Omega_{s,t}$ can be explicitly computed by block matrix inversion. Indeed, if $H = stB$, then we have
    \[
        \widetilde\Omega_{s,t}=\begin{bmatrix}-HH^T(I-HH^T)^{-1}&H(I-H^TH)^{-1}\\H^T(I-HH^T)^{-1}&-H^TH(I-H^TH)^{-1}\end{bmatrix}.
    \]
    Therefore, the top left block of $\widetilde\Omega_{s,t}$ is $-\sum_{q\ge1}s^{2q}t^{2q}(BB^T)^q$, the top right block is $\sum_{q\ge0}s^{2q+1}t^{2q+1}B(B^TB)^q$, the bottom left block is $\sum_{q\ge0}s^{2q+1}t^{2q+1}B^T(BB^T)^q$, and the bottom right block is $-\sum_{q\ge1}s^{2q}t^{2q}(B^TB)^q$. 
    
    The assumption $\norm B_\mx\le A_0/N$ implies 
    \[
        \max_{1\le i\le N}\norm{e_i^TB}_2\le \frac{A_0}{\sqrt N},\quad \max_{1\le j\le N}\norm{Be_j}_2\le \frac{A_0}{\sqrt N}.
    \]
    For any integer $q\ge1$, this together with the assumption $\norm B_\op\le \rho$ implies that
    \[
        |e_i^T(BB^T)^qe_j|=|(B^Te_i)^T(B^TB)^{q-1}(B^Te_j)|\le\frac{A_0^2}N\rho^{2q-2}.
    \]
    Therefore, we have
    \[
        \max_{1\le i,j\le N}|[t^{2q}](\widetilde \Omega_{s,t})_{ij}|\le s^{2q}\frac{A_0^2}{N}\rho^{2q-2}\le\frac{A_0^2\rho^{-2}}{N}(b\rho)^{2q},
    \]
    where $[t^{2q}](\widetilde\Omega_{s,t})_{ij}$ means to extract the coefficient of $t^{2q}$ in the expansion of $(\widetilde\Omega_{s,t})_{ij}$. Running the same argument for the other blocks in $\widetilde\Omega_{s,t}$, we obtain
    \begin{equation}\label{eq:9}
        \max_{1\le i,j\le 2N}|[t^m](\widetilde\Omega_{s,t})_{ij}|\le\frac CN(s\rho)^{m-1},
    \end{equation}
    where $m\ge1$ and $C$ is some constant depending on $A_0$ and $\rho$.

    Let $F_s(t)=1+\sum_{k\ge1}f_k(s)t^k$ and recall that we want to prove $F_s(1)=1+o(1)$ uniformly for $s\in I$. In this step we estimate $f_k(s)$ for small $k$. Setting $\theta=b\rho<1$, as $s\le b$ it follows from \eqref{eq:9} that
    \[
        \max_{1\le i,j\le 2N}|[t^m](\Omega_{s,t}(I+\Omega_{s,t})^{-1})_{ij}|\le\frac CN\theta^{m-1}
    \]
    for every $m\ge 1$, uniformly for $s\in I$. In the forthcoming argument, the constant $C$ will always depend only on $A_0$, $\rho$, and $I$, but may change from line to line.

    Apply Lemma \ref{lem:3.7} with $\Gamma(t)=\Omega_{s,t}(I+\Omega_{s,t})^{-1}$ and $Z(t)=A_{s,t}K_{s,t}^{-1/2}g$. Recall that the required condition $\norm{A_{s,t}K_{s,t}^{-1/2}}_\op\to0$ was verified in the beginning of the proof. By the lemma,
    \[
        |f_k(s)|\le\sum_{E=1}^k\sum_{1\le v\le\lfloor 2E/3\rfloor}C^E\left(\frac EN\right)^{E-v}\binom{k-1}{E-1}\theta^{k-E}.
    \]
    Fix a positive constant $K$ whose value will be determined later. Summing this upper bound on $|f_k(s)|$ over $1\le k\le K\sqrt N$ and changing the order of summation, we obtain
    \begin{align*}
        \sum_{1\le k\le K\sqrt N}|f_k(s)| &\le \sum_{E\le K\sqrt N}\sum_{1\le v\le\lfloor 2E/3\rfloor}C^E\left(\frac EN\right)^{E-v}\sum_{k=E}^{K\sqrt N}\binom{k-1}{E-1}\theta^{k-E}\\
        &\le \sum_{E\le K\sqrt N}\sum_{1\le v\le\lfloor 2E/3\rfloor}C^E\left(\frac EN\right)^{E-v}\sum_{k=E}^\infty\binom{k-1}{E-1}\theta^{k-E}\\
        &= \sum_{E\le K\sqrt  N}\sum_{1\le v\le \lfloor 2E/3\rfloor}C^E\left(\frac EN\right)^{E-v}(1-\theta)^{-E}.
    \end{align*}
    Since $v\le 2E/3$, we have $E-v\ge E/3$. Therefore, after enlarging $C$, we have
    \[
        C^E(1-\theta)^{-E}\left(\frac EN\right)^{E-v}\le\left(\frac{CE}{N}\right)^{E-v}.
    \]
    It follows that,
    \begin{align*}
        \sum_{1\le k\le K\sqrt N}|f_k(s)| &\le \sum_{E\le K\sqrt N}\sum_{1\le v\le \lfloor 2E/3\rfloor}\left(\frac{CE}N\right)^{E-v} \\
        &\le \sum_{E\le K\sqrt N}\sum_{j\ge \lceil E/3\rceil}\left(\frac{CE}N\right)^j\\
        &=\sum_{E\le K\sqrt N}\frac{(CE/N)^{\lceil E/3\rceil}}{1-CE/N} \\
        &\le \sum_{E\le K\sqrt N}\frac{(CK/\sqrt N)^{\lceil E/3\rceil}}{1-CK/\sqrt N}\\
        &\le\frac1{1-CK/\sqrt N}\sum_{E\ge1}\left(\frac{CK}{\sqrt N}\right)^{E/3}\\
        &=\frac1{1-CK/\sqrt N}\frac{(CK/\sqrt N)^{1/3}}{1-(CK/\sqrt N)^{1/3}}=o(1).
    \end{align*}

    It remains to estimate those coefficients $f_k(s)$ with $k>K\sqrt N$. Choose $R>1$ such that $R\theta<1$. Since $F_s(t)=R_{tB}(s)\det(I-s^2t^2B^TB)^{1/2}$ and $sR\norm B_\op\le R\theta<1$, $F_s$ is analytic in a neighborhood of the closed disk $|t|\le R$ for every $s\in I$. Throughout what follows, we will restrict $|t|\le R$.

    Let $z = (z_1,\dots, z_r)^T$ be a vector of independent standard complex Gaussian random variables, and let $X,Y$ be defined from $z$ as in the proof of Lemma \ref{lem:3.3}. Then for every complex $t$,
    \begin{equation}\label{eq:10}
        R_{tB}(s)=R_B(st)=\E\left[\prod_{i=1}^N(1+\sqrt{st}X_i)\prod_{j=1}^N(1+\sqrt{st}Y_j)\right].
    \end{equation}
    Since $B\one_N=B^T\one_N=0$, the construction in Lemma \ref{lem:3.3} gives $\sum_{i=1}^NX_i=\sum_{j=1}^NY_j=0$. Using this and AM-GM, we have
    \[
        \prod_{i=1}^N|1+\sqrt{st}X_i|\le\left(\frac1N\sum_{i=1}^N|1+\sqrt{st}X_i|^2\right)^{N/2}=\left(1+\frac{s|t|}N\norm X_2^2\right)^{N/2},
    \]
    and the analogous estimate also holds for $Y_i$. Denote $q = z^*\Sigma z=\norm X_2^2=\norm Y_2^2$. It follows from \eqref{eq:10} that for all $s\in I$ and $|t|\le R$,
    \[
        |R_{tB}(s)|\le\E\left[\left(1+\frac{sRq}N\right)^N\right]\le\E[e^{sRq}]=\prod_\ell(1-sR\sigma_\ell)^{-1},
    \]
    where $\sigma_\ell$ are the singular values of $B$. Note that we used $\E[e^{az_i^*z_i}]=(1-a)^{-1}$ for $\mathrm{Re}(a)<1$, which is justified by $sR\sigma_\ell\le bR\rho<1$. By the elementary inequality $\log(\frac1{1-x})\le\frac x{1-x}$ for $0\le x<1$, we have
    \[
        -\log(1-sR\sigma_\ell)\le-\log(1-bR\sigma_\ell)\le C\sigma_\ell,
    \]
    where $C = bR/(1-bR\rho)$. Therefore,
    \[
        \log\prod_\ell(1-sR\sigma_\ell)^{-1}\le C\sum_\ell\sigma_\ell\le C\sqrt N\left(\sum_\ell\sigma_\ell^2\right)^{1/2}\le C\sqrt N\norm B_F\le C\sqrt N,
    \]
    where we used Cauchy-Schwarz and $\norm B_F^2\le N^2\norm B_\mx^2\le A_0^2$. We conclude that for every $s\in I$, we have $\max_{|t|=R}|R_{tB}(s)|\le e^{C\sqrt N}$ and
    \begin{align*}
        |\det(I-s^2t^2B^TB)^{1/2}|&=\prod_\ell|1-s^2t^2\sigma_\ell^2|^{1/2}\le\prod_\ell(1+s^2R^2\sigma_\ell^2)^{1/2}
        \\&\le\exp\left(\frac{s^2R^2}2\sum_\ell\sigma_\ell^2\right)=\exp\left(\frac{s^2R^2}2\norm B_F^2\right)\le e^C.
    \end{align*}
    Together, these show that $\max_{|t|\le R}|F_s(t)|\le e^{C\sqrt N+C}$. Now by Cauchy's integral formula, we have
    \[
        |f_k(s)|\le\frac1{2\pi}\int_{|t|=R}\frac{|F_s(t)|}{|t|^{k+1}}|\mathrm dt|\le\frac1{2\pi}\cdot2\pi R\cdot\frac{e^{C\sqrt N+C}}{R^{k+1}}=e^{C\sqrt N+C}R^{-k}.
    \]
    We now choose the constant $K$ sufficiently large so that $K\log R>C$. With this value of $K$,
    \[
        \sum_{k>K\sqrt N}|f_k(s)|\le\frac{e^{C\sqrt N+C}R^{-K\sqrt N}}{1-R^{-1}}=o(1).
    \]

    Aggregating the contributions from $k \le K\sqrt N$ and $k > K\sqrt N$, we have that $\sum_{k\ge 1}|f_k(s)|=o(1)$. In particular, $F_s(1)=1+o(1)$, as desired.
\end{proof}
\medskip

\subsection{Proof of Proposition~\ref{prop:permanent bound}}\label{sec:3.3}

The case when $\rho=0$ is immediate, since then $B = 0$. We may therefore assume that $0 < \rho < 1$. Choose constants $0 < a < 1 < b < \rho^{-1}$ and let $I=[a,b]$. Let $T\sim\Gamma(N+1,1)$, and define
\[
    \mathcal A_N:=\left\{\frac NT\in I\right\}=\left\{\frac Nb\le T\le \frac Na\right\}.
\]
Then $T/N\to1$ in probability and we have that $\mathbb P(\mathcal A_N)=1-o(1)$. For $s\in I$, denote $\Delta(s) = \det(I-s^2B^TB)^{-1/2}$. Let $\sigma_1,\dots, \sigma_r$ be the nonzero singular values of $B$. Since $\sigma_\ell\le\rho$ and $\sum_\ell\sigma_\ell^2=\norm B_F^2\le A_0^2$, by the elementary inequality $\log(\frac1{1-x})\le\frac x{1-x}$ for $0\le x<1$, we have
\begin{align*}
    0\le\log\Delta(s)&=-\frac12\sum_{\ell=1}^r\log(1-s^2\sigma_\ell^2)\le\frac12\sum_{\ell=1}^r\log\left(\frac1{1-b^2\sigma_\ell^2}\right)\\
    &\le\frac{b^2}{2(1-b^2\rho^2)}\sum_{\ell=1}^r\sigma_\ell^2\le C.
\end{align*}
Moreover,
\[
    \frac{\Delta'(s)}{\Delta(s)}=s\sum_{\ell=1}^r\frac{\sigma_\ell^2}{1-s^2\sigma_\ell^2},
\]
and hence
\[
    \sup_{s\in I}|\Delta'(s)|\le\sup_{s\in I}|\Delta(s)|\frac{bA_0^2}{1-b^2\rho^2}\le C.
\]
It follows that
\begin{align*}
    |\E[\one_{\mathcal A_N}\Delta(N/T)]-\Delta(1)| &\le |\E[\one_{\mathcal A_N}(\Delta(N/T)-\Delta(1))]|+\Delta(1)\mathbb P(\mathcal A_N^c)\\
    &\le C\E[\one_{\mathcal A_N}|N/T-1|] + o(1)=o(1),
\end{align*}
where we used $T/N\to1$ in probability. Thus by Proposition \ref{prop:rook function asymptotic}, we have
\begin{equation}\label{eq:11}
    \E[\one_{\mathcal A_N}R_B(N/T)]=\Delta(1)+o(1).
\end{equation}
We now claim $\E[\one_{\mathcal A_N^c}R_B(N/T)]=o(1)$. Again let $X, Y$ and $z$ be as in Lemma \ref{lem:3.3}, so that $z^*\Sigma z=\norm X_2^2=\norm Y_2^2$. As before, $B\one_N=B^T\one_N=0$ implies $\sum_{i=1}^NX_i=\sum_{j=1}^NY_j=0$. For every $u\ge0$, by AM-GM and Lemma \ref{lem:3.3}, we have
\begin{align*}
    |R_B(u)| &\le \E\left[\prod_{i=1}^N|1+\sqrt uX_i|\prod_{j=1}^N|1+\sqrt uY_j|\right]\\
    &\le\E\left[\left(\frac 1N\sum_{i=1}^N|1+\sqrt uX_i|^2\right)^{N/2}\left(\frac1N\sum_{j=1}^N|1+\sqrt uY_j|^2\right)^{N/2}\right]\\
    &=\E\left[\left(1+\frac{uz^*\Sigma z}{N}\right)^{N/2}\left(1+\frac{uz^*\Sigma z}{N}\right)^{N/2}\right]\\
    &=\E\left[\left(1+\frac{uz^*\Sigma z}{N}\right)^N\right].
\end{align*}
Hence,
\begin{align}\label{eq:12}
    \E_T[\one_{\mathcal A_N^c}|R_B(N/T)|]&\le\mathbb E_z\left[\frac1{N!}\int_{\{t>0:N/t\notin I\}}\left(1+\frac{z^*\Sigma z}{t}\right)^Nt^Ne^{-t}\mathrm dt\right]\nonumber\\
    &= \E_z\left[\frac1{N!}\int_{\{t>0:N/t\notin I\}}(t+z^*\Sigma z)^Ne^{-t}\mathrm dt\right].
\end{align}
Let $U$ be a random variable with distribution $\Gamma(N+1,1)$. For fixed $z$, a change of variable shows
\begin{align}\label{eq:13}
    \frac1{N!}\int_{\{t>0:N/t\notin I\}}&(t+z^*\Sigma z)^Ne^{-t}\mathrm dt\nonumber\\
    &=e^{z^*\Sigma z}\mathbb P\left(U-z^*\Sigma z\notin\left[\frac Nb,\frac Na\right],~U\ge z^*\Sigma z\right).
\end{align}

Choose a sufficiently small $\varepsilon$, and assume first that $z^*\Sigma z\le \varepsilon N$. Then on this event,
\[
    \left\{U-z^*\Sigma z\notin\left[\frac Nb,\frac Na\right]\right\}\subseteq\left\{U\le\left(\frac1b+\varepsilon\right)N\right\}\cup\left\{U\ge\frac Na\right\}.
\]
Since $a<1<b$, there exist $\delta_-,\delta_+>0$ such that $1/b+\varepsilon\le1-\delta_-$ and $1/a\ge1+\delta_+$ hold. For $0 < \theta<1$, using $\E[e^{\theta U}]=(1-\theta)^{-(N+1)}$ and the Chernoff bound, we have
\begin{align*}
    \mathbb P(U\ge(1+\delta_+)N) &\le e^{-\theta(1+\delta_+)N}\E[e^{\theta U}]\\
    &= e^{-\theta(1+\delta_+)N}(1-\theta)^{-(N+1)}\le e^{-c_+N}
\end{align*}
for a suitable $\theta>0$ and some $c_+>0$. Similarly,
\begin{align*}
    \mathbb P(U\le (1-\delta_-)N) &\le e^{\theta(1-\delta_-)N}(1+\theta)^{-(N+1)}\le e^{-c_-N}
\end{align*}
for some $c_->0$. Consequently,
\begin{equation}\label{eq:14}
    \mathbb P\left(U-z^*\Sigma z\notin\left[\frac Nb,\frac Na\right],~U\ge z^*\Sigma z\right)\le e^{-c_-N}+e^{-c_+N} \le e^{-cN}
\end{equation}
for some smaller constant $c>0$.

For the event $z^*\Sigma z\ge\varepsilon N$, choose $\lambda>1$ such that $\lambda\rho<1$. We have
\[
    \E_z\left[e^{z^*\Sigma z}\one_{\{z^*\Sigma z>\varepsilon N\}}\right]\le e^{-(\lambda-1)\varepsilon N}\E_z[e^{\lambda z^*\Sigma z}] = e^{-(\lambda-1)\varepsilon N}\prod_{\ell=1}^r(1-\lambda\sigma_\ell)^{-1}.
\]
Since $\lambda\sigma_\ell\le\lambda\rho<1$, as before we have
\[
    \log\prod_{\ell=1}^r(1-\lambda\sigma_\ell)^{-1}\le C\sum_{\ell=1}^r\sigma_\ell\le C\sqrt N\norm B_F\le C\sqrt N.
\]
It follows that
\[
    \E_z\left[e^{z^*\Sigma z}\one_{\{z^*\Sigma z>\varepsilon N\}}\right]\le\exp\left(-(\lambda-1)\varepsilon N+C\sqrt N\right).
\]
Since $\lambda>1$ and $z^*\Sigma z\ge0$, the same exponential moment estimate also gives $E_z[e^{z^*\Sigma z}]\le\E_z[e^{\lambda z^*\Sigma z}]\le e^{C\sqrt N}$. Combining this with \eqref{eq:12}, \eqref{eq:13}, and \eqref{eq:14},
\begin{align*}
    \E_T[\one_{\mathcal A_N^c}|R_B(N/T)|] &\le \E_z\left[e^{z^*\Sigma z}\mathbb P\left(U-z^*\Sigma z\notin\left[\frac Nb,\frac Na\right],~U\ge z^*\Sigma z\right)\right]\\
    &\le e^{-cN}\E_z\left[e^{z^*\Sigma z}\one_{\{z^*\Sigma z\le\varepsilon N\}}\right]+\E_z\left[e^{z^*\Sigma z}\one_{\{z^*\Sigma z>\varepsilon N\}}\right]\\
    &\le e^{-cN+C\sqrt N}+e^{-(\lambda-1)\varepsilon N+C\sqrt N}=o(1).
\end{align*}
Together with Lemma~\ref{lem:3.4} and \eqref{eq:11}, we now have
\[
    \frac{N^N\per D}{N!}=\E[R_B(N/T)]=\det(I-B^TB)^{-1/2}+o(1).
\]
Finally, since $1\le\det(I-B^TB)^{-1/2}\le e^C$, we may rewrite the additive error as a multiplicative error, i.e.,
\[
    \frac{N^N\per D}{N!}=(1+o(1))\det(I-B^TB)^{-1/2},
\]
as desired.\qed

\section{Sinkhorn scaling}\label{sec:Sinkhorn}

Let us recall that by Sinkhorn's theorem, for any $n\times n$ matrix $A$ with positive entries, there exist vectors $x,y\in\mathbb R^n$ such that $\diag(e^x) A \diag (e^y)$ is a doubly stochastic matrix, where $\diag(e^x)$ is the diagonal matrix with diagonal $(e^{x_1},e^{x_2},\dots, e^{x_n})$, and $\diag(e^y)$ is defined similarly.

The goal of this section is to prove a quantitative version of the statement that, when applying Sinkhorn's theorem to a matrix which is already close to being doubly stochastic, the vectors $x,y$ can be chosen to be small. This will be useful in our applications for the following reason. The adjacency matrix of a regular tournament is always a scalar multiple of a doubly stochastic matrix, and we can input such a doubly stochastic matrix into Proposition \ref{prop:permanent bound} to extract information about its permanent. The tournaments we must consider in the proof of the upper bound (Theorem \ref{thm:upper}) need not be regular, but they can be shown to be ``close'' to regular, and in particular their adjacency matrix is close to being a scalar multiple of a doubly stochastic matrix. In order to extract information about the permanents of these matrices via Proposition \ref{prop:permanent bound}, we will apply Sinkhorn scaling to produce a doubly stochastic matrix, and will need to show that the scaling matrices $\diag(e^x)$ and $\diag(e^y)$ are close to the identity matrix. To this end, we need:

\begin{proposition}\label{prop:sinkhorn}
   Let $A_0 < \infty$ and $\rho < 1$. Let $Q$ be a nonnegative $N\times N$ matrix with $\norm Q_\mx \le A_0/N$. Let $u,v\in\mathbb R^N$ be given by $u = Q\one_N-\one_N$ and $v = Q^T\one_N - \one_N$, and let $P = \diag(Q\one _N)^{-1/2} Q\diag(Q^T\one_N)^{-1/2}$. Assume $\sigma_2(P) \le \rho+o(1)$ and
    \[
        \norm u_\infty + \norm v_\infty + \frac{\norm u_2+\norm v_2}{\sqrt N} = o(1).
    \]
    Then there exist vectors $x,y\in\mathbb R^N$ such that $\diag(e^x)Q\diag(e^y)$ is doubly stochastic, $\norm x_2 + \norm y_2 = O(\norm u_2 + \norm v_2)$, and $\norm x_\infty + \norm y_\infty=o(1)$.
\end{proposition}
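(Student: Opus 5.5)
We will obtain the scaling vectors as the minimizer of the standard convex potential
\[
    f(x,y)=\sum_{i,j}Q_{ij}e^{x_i+y_j}-\sum_i x_i-\sum_j y_j .
\]
Its critical points are exactly the pairs $(x,y)$ for which $\diag(e^x)Q\diag(e^y)$ is doubly stochastic. The gradient at the origin is $(u,v)$. The Hessian at $(x,y)$ is the block matrix
\[
    H(x,y)=\begin{bmatrix}\diag(r)&\widetilde Q\\ \widetilde Q^T&\diag(c)\end{bmatrix},
\]
where $\widetilde Q=\diag(e^x)Q\diag(e^y)$ and $r,c$ are its row and column sums. The function $f$ is invariant under the gauge direction $(t\one_N,-t\one_N)$. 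We therefore restrict to the hyperplane $W=\{(x,y):\one^Tx=\one^Ty\}$, on which we expect strong convexity near the origin.

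\emph{Step 1: spectral gap at the origin.} Conjugating $H(0,0)$ by $\diag(Q\one_N,Q^T\one_N)^{-1/2}$ gives $\begin{bmatrix}I&P\\P^T&I\end{bmatrix}$. Its eigenvalues are $1\pm\sigma_k(P)$, together with $1$ with the remaining multiplicity. The matrix $P$ has $\sigma_1(P)=1$, with singular vectors $(Q\one)^{1/2}$ and $(Q^T\one)^{1/2}$; this is the standard fact that $D_r^{-1/2}QD_c^{-1/2}$ has norm one. The eigenvalue $0$ corresponds to the direction $((Q\one)^{1/2},-(Q^T\one)^{1/2})$ after conjugation, which is within $o(1)$ of the gauge direction because $\norm u_\infty+\norm v_\infty=o(1)$. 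Using $\sigma_2(P)\le\rho+o(1)$ and diagonal factors $1+o(1)$, we get $H(0,0)\succeq(1-\rho-o(1))I$ on $W$, up to an $o(1)$ angle correction.

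\emph{Step 2: stability of the Hessian.} We show that $H(x,y)\succeq c\,I$ on $W$, with $c=(1-\rho)/2$, for every $(x,y)$ with $\norm x_\infty+\norm y_\infty\le\eta$ and $\eta$ small. Each entry of $H$ changes by a factor $e^{O(\eta)}$, so $H(x,y)$ lies between $e^{\pm 2\eta}$ multiples of $H(0,0)$ in the positive semidefinite order. This holds because $H$ is a sum of the rank-one positive semidefinite terms $Q_{ij}e^{x_i+y_j}(e_i+e_{N+j})(e_i+e_{N+j})^T$.

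\emph{Step 3: the bootstrap, where the real work lies.} By strong convexity, a minimizer of $f$ on $W\cap\{\norm\cdot_\infty\le\eta\}$ would satisfy $\norm x_2+\norm y_2\le 2c^{-1}(\norm u_2+\norm v_2)$. However, strong convexity in $\ell_2$ only holds on an $\ell_\infty$ box, and an $\ell_2$ ball of radius $O(\norm u_2)$ is not contained in a small $\ell_\infty$ box. The fix is to convert $\ell_2$ control into $\ell_\infty$ control using the critical-point equation together with $\norm Q_\mx\le A_0/N$. At a critical point,
\[
    e^{-x_i}=\sum_jQ_{ij}e^{y_j}=1+u_i+\sum_jQ_{ij}(e^{y_j}-1),
\]
and the last sum is at most $\frac{A_0}{N}\sum_j|e^{y_j}-1|\lesssim A_0\norm y_2/\sqrt N$. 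So $\norm x_\infty\le\norm u_\infty+O(\norm y_2/\sqrt N)=o(1)$, and symmetrically for $y$.

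To make this rigorous, we will run a continuity argument along the path $Q_t$ whose gradient data interpolate from $(0,0)$ to $(u,v)$. Concretely, minimize $f_t(x,y)=f(x,y)-(1-t)(u^Tx+v^Ty)$, whose critical points scale $Q$ to have margins $\one+(1-t)(u,v)$. At $t=0$ the minimizer is $(0,0)$. The implicit function theorem, using Step 2, continues the minimizer as long as it stays in the $\eta$-box. While it does, Steps 2–3 give $\norm{(x,y)}_2=O(\norm u_2+\norm v_2)$, hence $\norm{(x,y)}_\infty=o(1)<\eta/2$. The solution therefore never reaches the boundary of the box, and the continuation extends to $t=1$.

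This yields the required $x,y$ with $\norm x_2+\norm y_2=O(\norm u_2+\norm v_2)$ and $\norm x_\infty+\norm y_\infty=o(1)$. All constants depend only on $A_0$ and $\rho$, as required by Remark~\ref{remark:uniformity}. The main obstacle is Step 3, the interplay between the $\ell_2$ and $\ell_\infty$ norms, and in particular checking that the gauge-direction perturbation in Step 1 does not spoil the spectral gap on $W$.
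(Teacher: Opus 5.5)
Your argument is correct, but it reaches the result by a different route from the paper.

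\textbf{The paper's route.} The paper never introduces the potential $f$. It inverts the linearization of the scaling equations at the origin on the slice $\{\langle r,x\rangle=\langle c,y\rangle\}$, using the same spectral gap of $\left(\begin{smallmatrix}I&P\\P^T&I\end{smallmatrix}\right)$ as your Step~1. It proves that this inverse is bounded both in $\ell_2$ and in the hybrid norm $\norm{(a,b)}_*=\norm a_\infty+\norm b_\infty+(\norm a_2+\norm b_2)/\sqrt N$. The $\ell_\infty$ part of that bound is exactly your observation that $\norm Q_\mx\le A_0/N$ converts $\ell_2$ control of one block into $\ell_\infty$ control of the other. It then bounds the remainder $e^{t}-1-t$ in $\norm\cdot_*$, which gives a contraction on a small $\norm\cdot_*$-ball. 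The $\ell_2$ estimate is recovered afterwards by absorption.

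\textbf{What each buys.} Building the $\ell_2$/$\ell_\infty$ interplay into the norm gives existence and local uniqueness at once, with no continuity argument. The price is the quadratic and Lipschitz remainder estimates. Your variational version avoids those. The rank-one decomposition gives $e^{-\eta}H(0,0)\preceq H(x,y)\preceq e^{\eta}H(0,0)$ on the whole box. The $\ell_2\to\ell_\infty$ conversion is only applied at exact critical points. In exchange you need the implicit-function/continuity step in $t$.

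\textbf{The points you flagged.}
\begin{itemize}
\item The ``angle correction'' is unnecessary. Since $H(0,0)(\one,-\one)=0$ exactly, $W$ is $H(0,0)$-invariant. Hence $\lambda_{\min}(H(0,0)|_W)=\lambda_2(H(0,0))\ge\min_{i,j}\{r_i,c_j\}\,(1-\sigma_2(P))$, by congruence with $\left(\begin{smallmatrix}I&P\\P^T&I\end{smallmatrix}\right)$ (Ostrowski's theorem).
\item Since $\one^Tu=\one^Tv$, $\nabla f_t$ always lies in $W$. So critical points of $f_t|_W$ are genuine critical points.
\item For $f_t$ the critical equation is $e^{x_i}(Qe^y)_i=1+(1-t)u_i$ rather than the one you wrote for $f$. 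It still gives $\norm x_\infty=O(\norm u_\infty+\norm y_2/\sqrt N)$.
\end{itemize}
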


The main ingredient in the proof of Proposition \ref{prop:sinkhorn} is an application of the Banach fixed-point theorem (also known as the contraction mapping theorem). We begin by recording some linear algebra facts in the following lemmas.

\begin{lemma}\label{lem:sinkhorn spectral 1}
    Let $Q$ be a nonnegative $N\times N$ matrix with positive row and column sums, and let $r = Q\one_N$ and $c = Q^T\one_N$. If $P = \diag(r)^{-1/2}Q\diag(c)^{-1/2}$, then $\norm P_\op=1$, $Pc^{1/2}=r^{1/2}$, and $P^Tr^{1/2}=c^{1/2}$, where square roots are taken coordinatewise.
\end{lemma}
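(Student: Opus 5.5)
The two identities $Pc^{1/2}=r^{1/2}$ and $P^Tr^{1/2}=c^{1/2}$ come from direct computation. The bound $\norm P_\op\le1$ comes from a Schur-test, i.e.\ Cauchy--Schwarz, argument. Equality then follows because $c^{1/2}$ is a nonzero vector with $\norm{Pc^{1/2}}_2=\norm{c^{1/2}}_2$.

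For the identities, I compute entrywise. Since $Q$ has positive row and column sums, $r$ and $c$ have positive entries, so $P$ is well defined. For each $i$,
\[
    (Pc^{1/2})_i=\sum_{j}r_i^{-1/2}Q_{ij}c_j^{-1/2}c_j^{1/2}=r_i^{-1/2}\sum_jQ_{ij}=r_i^{-1/2}r_i=r_i^{1/2}.
\]
The identity $P^Tr^{1/2}=c^{1/2}$ follows symmetrically, using column sums.

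For the norm bound, take any $w\in\mathbb R^N$ and write $P_{ij}w_j=\bigl(Q_{ij}^{1/2}r_i^{-1/2}\bigr)\bigl(Q_{ij}^{1/2}c_j^{-1/2}w_j\bigr)$. Cauchy--Schwarz gives
\[
    (Pw)_i^2\le\Bigl(\sum_jQ_{ij}r_i^{-1}\Bigr)\Bigl(\sum_jQ_{ij}c_j^{-1}w_j^2\Bigr)=\sum_jQ_{ij}c_j^{-1}w_j^2 .
\]
Summing over $i$ gives
\[
    \norm{Pw}_2^2\le\sum_j\Bigl(\sum_iQ_{ij}\Bigr)c_j^{-1}w_j^2=\sum_jw_j^2=\norm w_2^2,
\]
so $\norm P_\op\le1$.

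For the reverse inequality, set $w=c^{1/2}$, which is nonzero because $c$ has positive entries. By the first identity,
\[
    \norm{Pc^{1/2}}_2^2=\norm{r^{1/2}}_2^2=\sum_ir_i=\sum_{i,j}Q_{ij}=\sum_jc_j=\norm{c^{1/2}}_2^2.
\]
Hence $\norm P_\op\ge1$, and therefore $\norm P_\op=1$.

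No step here is a genuine obstacle. The one point needing care is the Cauchy--Schwarz split in the norm bound: the weights must be chosen so that the row sums normalize the first factor and the column sums normalize the second after summing over $i$.
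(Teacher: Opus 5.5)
Your proof is correct and follows essentially the same route as the paper: a Cauchy--Schwarz (Schur test) bound gives $\norm P_\op\le1$, and the vector $c^{1/2}$ attains it. The only differences are cosmetic. The paper applies Cauchy--Schwarz to the bilinear form $\alpha^TP\beta$ and certifies equality via $P^TPc^{1/2}=c^{1/2}$, whereas you bound $\norm{Pw}_2$ directly and compare $\norm{Pc^{1/2}}_2$ with $\norm{c^{1/2}}_2$; you also verify the two identities explicitly, which the paper leaves implicit.
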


\begin{proof}
    For $\alpha,\beta\in\mathbb R^N$, Cauchy-Schwarz gives
    \begin{align*}
        |\alpha^TP\beta|&=\left|\sum_{1\le i,j\le N} Q_{ij}\frac{\alpha_i}{\sqrt{r_i}}\frac{\beta_j}{\sqrt{c_j}}\right|\\&\le\left(\sum_{1\le i,j\le N}Q_{ij}\frac{\alpha_i^2}{r_i}\right)^{1/2}\left(\sum_{1\le i,j\le N}Q_{ij}\frac{\beta_j^2}{c_j}\right)^{1/2}=\norm\alpha_2\norm\beta_2.
    \end{align*}
    Thus $\norm P_\op\le1$. On the other hand, $P^TPc^{1/2}=P^Tr^{1/2}=c^{1/2}$, so $1$ is a singular value of $P$ and therefore $\norm P_\op=1$.
\end{proof}

\begin{lemma}\label{lem:sinkhorn spectral 2}
    Under the assumptions of Lemma \ref{lem:sinkhorn spectral 1}, suppose in addition that $\sigma_2(P)\le\rho<1$. If $\mathcal M = \left(\!\begin{smallmatrix}I&P\\P^T&I\end{smallmatrix}\right)$, then $\ker\mathcal M=\mathrm{span}\{(r^{1/2},-c^{1/2})\}$, and for all $\xi\in\mathbb R^{2N}$ satisfying $\xi\perp(r^{1/2},-c^{1/2})$ we have $\langle\xi,\mathcal M\xi\rangle\ge(1-\rho)\norm\xi_2^2$.
\end{lemma}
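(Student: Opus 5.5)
The plan is to diagonalize $\mathcal M$ through the singular value decomposition of $P$. Write $P = \sum_{k=1}^N \sigma_k a_k b_k^T$ with orthonormal bases $\{a_k\}$ and $\{b_k\}$ of $\mathbb R^N$ and $\sigma_1\ge\dots\ge\sigma_N\ge0$. By Lemma \ref{lem:sinkhorn spectral 1}, $\norm P_\op=1$, $Pc^{1/2}=r^{1/2}$ and $P^Tr^{1/2}=c^{1/2}$. Since $\norm{r^{1/2}}_2^2=\sum_i r_i=\sum_j c_j=\norm{c^{1/2}}_2^2$, we may take $\sigma_1=1$, $a_1=r^{1/2}/\norm{r^{1/2}}_2$ and $b_1=c^{1/2}/\norm{c^{1/2}}_2$. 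The hypothesis $\sigma_2(P)\le\rho<1$ then says that $1$ is a simple singular value, with every other $\sigma_k\le\rho$.

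For each $k$, the plane $V_k=\mathrm{span}\{(a_k,0),(0,b_k)\}$ is invariant under $\mathcal M$. Indeed, $\mathcal M(a_k,0)=(a_k,\sigma_k b_k)$ and $\mathcal M(0,b_k)=(\sigma_k a_k, b_k)$. So on $V_k$ the matrix $\mathcal M$ acts as the $2\times2$ matrix $\left(\begin{smallmatrix}1&\sigma_k\\\sigma_k&1\end{smallmatrix}\right)$. Its eigenvalues are $1\pm\sigma_k$, with eigenvectors $(a_k,\pm b_k)/\sqrt2$.

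The planes $V_1,\dots,V_N$ are mutually orthogonal and together span $\mathbb R^{2N}$. Hence $\mathcal M$ is symmetric with spectrum $\{1\pm\sigma_k\}$ and an orthonormal eigenbasis $\{(a_k,\pm b_k)/\sqrt2\}$.

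The eigenvalue $0$ arises only from $1-\sigma_k$ with $\sigma_k=1$, that is, only for $k=1$, since $\sigma_k\le\rho<1$ for $k\ge2$. Its eigenvector $(a_1,-b_1)$ is proportional to $(r^{1/2},-c^{1/2})$, because the two normalizing constants agree. This gives $\ker\mathcal M=\mathrm{span}\{(r^{1/2},-c^{1/2})\}$.

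For $\xi$ orthogonal to this kernel, expand $\xi$ in the remaining eigenvectors. Their eigenvalues are $2$ (for $k=1$, sign $+$) and $1\pm\sigma_k$ for $k\ge2$. Each of these is at least $1-\rho$, which gives $\langle\xi,\mathcal M\xi\rangle\ge(1-\rho)\norm\xi_2^2$.

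The only point needing care is the degenerate case where $P$ has fewer than $N$ nonzero singular values. There we simply complete $\{a_k\}$ and $\{b_k\}$ to orthonormal bases, assigning $\sigma_k=0$. This is harmless, and I do not expect any real obstacle.
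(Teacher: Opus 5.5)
Your proposal is correct and follows essentially the same route as the paper: both diagonalize $\mathcal M$ through the SVD of $P$, reduce to the invariant $2\times 2$ blocks $\left(\begin{smallmatrix}1&\sigma_k\\\sigma_k&1\end{smallmatrix}\right)$ with eigenvalues $1\pm\sigma_k$, and read off the one-dimensional kernel and the $(1-\rho)$ lower bound from $\sigma_1=1$ and $\sigma_2\le\rho$. The only difference is cosmetic: you build $(r^{1/2},c^{1/2})$ into the first singular pair, using $\sum_i r_i=\sum_j c_j$, whereas the paper first counts exactly one zero eigenvalue and then checks via Lemma \ref{lem:sinkhorn spectral 1} that $(r^{1/2},-c^{1/2})$ lies in the kernel.
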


\begin{proof}
    Let $P= U\Sigma V^T$ be a singular value decomposition, where $\Sigma$ is the diagonal matrix $\diag(\sigma_1(P),\dots,\sigma_N(P))$. Consider the matrix $\widetilde{\mathcal M}$ given by
    \[
        \widetilde{\mathcal M} = \begin{pmatrix}U^T&0\\0&V^T\end{pmatrix}\mathcal M\begin{pmatrix}U&0\\0&V\end{pmatrix}=\begin{pmatrix}I&\Sigma\\\Sigma&I\end{pmatrix}.
    \]
    For each $1\le k\le N$, the subspace spanned by $(e_k,0)$ and $(0,e_k)$ is invariant under the matrix on the right, and the restriction of $\widetilde{\mathcal M}$ to this subspace is
    \[
        \begin{pmatrix}1&\sigma_k(P)\\\sigma_k(P)&1\end{pmatrix}.
    \]
    Thus its two eigenvalues are $1+\sigma_k(P)$ and $1-\sigma_k(P)$, and so the eigenvalues of $\mathcal M$ are $1+\sigma_k(P)$ and $1-\sigma_k(P)$ for $1\le k\le N$. 
    
    Since $\sigma_1(P)=1$ and $\sigma_2(P)\le\rho<1$, exactly one eigenvalue of $\mathcal M$ is zero. Thus the kernel of $\mathcal M$ is one-dimensional. The identities in Lemma \ref{lem:sinkhorn spectral 1} show that $(r^{1/2},-c^{1/2})$ belongs to this kernel. Since all other eigenvalues are at least $1-\rho$, it follows that $\langle\xi,\mathcal M\xi\rangle\ge(1-\rho)\norm\xi_2^2$, as desired.
\end{proof}

For the remainder of this section, we use an auxiliary norm $\norm{\cdot}_*$ defined for a pair of vectors $a,b$ as follows:
\[
    \norm{(a,b)}_* = \norm a_\infty+\norm b_\infty+\frac{\norm a_2+\norm b_2}{\sqrt N}.
\]

\begin{lemma}\label{lem:sinkhorn inverse}
    Under the assumptions of Lemma \ref{lem:sinkhorn spectral 2}, suppose in addition that $1/2\le r_i,c_j\le3/2$ and $\norm Q_\mx\le A_0/N$. If $f,h\in\mathbb R^N$ satisfy $\langle\one_N,f\rangle=\langle\one_N,h\rangle$, then there exists a unique pair $(a,b)$ satisfying
    \begin{equation}\label{eq:15}
        \begin{cases}
            \diag(r)a + Qb=f,\\
            Q^Ta+\diag(c)b=h,\\
            \langle r,a\rangle=\langle c,b\rangle.
        \end{cases}
    \end{equation}
    Moreover, there exists a constant $C = C(A_0,\rho)$ such that
    \begin{equation}\label{eq:16}
        \norm a_2+\norm b_2\le C(\norm f_2+\norm h_2)
    \end{equation}
    and
    \begin{equation}\label{eq:17}
        \norm{(a,b)}_*\le C\norm{(f,h)}_*.
    \end{equation}
\end{lemma}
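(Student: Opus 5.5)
The plan is to reduce \eqref{eq:15} to the block system governed by $\mathcal M$ in Lemma \ref{lem:sinkhorn spectral 2}. Substitute $a = \diag(r)^{-1/2}\alpha$ and $b = \diag(c)^{-1/2}\beta$. The first two equations become
\[
    \alpha + P\beta = \diag(r)^{-1/2}f, \qquad P^T\alpha + \beta = \diag(c)^{-1/2}h,
\]
that is, $\mathcal M(\alpha,\beta) = (\diag(r)^{-1/2}f,\diag(c)^{-1/2}h) =: \phi$. The third equation $\langle r,a\rangle=\langle c,b\rangle$ becomes $\langle(\alpha,\beta),(r^{1/2},-c^{1/2})\rangle=0$. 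The right side satisfies $\langle\phi,(r^{1/2},-c^{1/2})\rangle = \langle\one_N,f\rangle-\langle\one_N,h\rangle=0$, so $\phi$ is orthogonal to $\ker\mathcal M$. Since $\mathcal M$ is symmetric, $\phi$ therefore lies in its range. Lemma \ref{lem:sinkhorn spectral 2} then gives a unique solution $(\alpha,\beta)\perp\ker\mathcal M$, with $\norm{(\alpha,\beta)}_2\le(1-\rho)^{-1}\norm\phi_2$. Because $1/2\le r_i,c_j\le 3/2$, these diagonal changes of variables distort $\ell_2$ (and $\ell_\infty$) norms by at most a factor $\sqrt 3$. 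This yields existence, uniqueness, and \eqref{eq:16}.

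For \eqref{eq:17}, the $\ell_2$ part of $\norm{\cdot}_*$ is already controlled by \eqref{eq:16}. It remains to bound $\norm a_\infty+\norm b_\infty$. I will read this off the equations directly: $a = \diag(r)^{-1}(f - Qb)$, so
\[
    |a_i|\le 2\left(|f_i| + \sum_j Q_{ij}|b_j|\right)\le 2\norm f_\infty + 2\frac{A_0}{N}\norm b_1\le 2\norm f_\infty + \frac{2A_0}{\sqrt N}\norm b_2 .
\]
Applying \eqref{eq:16} to the last term bounds it by $C(\norm f_2+\norm h_2)/\sqrt N$. The same argument applied to $b=\diag(c)^{-1}(h-Q^Ta)$ bounds $\norm b_\infty$. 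Summing gives $\norm{(a,b)}_*\le C\norm{(f,h)}_*$ with $C$ depending only on $A_0$ and $\rho$.

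The main point requiring care is the solvability and uniqueness: we need to check that the compatibility condition $\langle\one,f\rangle=\langle\one,h\rangle$ is exactly orthogonality of $\phi$ to the kernel $\mathrm{span}\{(r^{1/2},-c^{1/2})\}$. We also need that the normalization $\langle r,a\rangle=\langle c,b\rangle$ exactly removes the kernel direction. Both are direct computations using $r^{1/2}\cdot r^{-1/2}f=f$ coordinatewise. The $\ell_\infty$ bootstrap is then routine via the entrywise bound $\norm Q_\mx\le A_0/N$ and Cauchy--Schwarz.
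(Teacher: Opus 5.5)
Your proposal is correct and follows essentially the same route as the paper: the same diagonal rescaling to $\alpha+P\beta=\phi$, $P^T\alpha+\beta=\psi$; the same identification of both the compatibility condition and the normalization with orthogonality to $(r^{1/2},-c^{1/2})$; the $(1-\rho)^{-1}$ bound from Lemma \ref{lem:sinkhorn spectral 2}; and an $\ell_\infty$ bootstrap via $\norm Q_\mx\le A_0/N$ and Cauchy--Schwarz. The only difference is cosmetic: you run the $\ell_\infty$ step in the original variables ($a=\diag(r)^{-1}(f-Qb)$), whereas the paper uses the rescaled ones ($\alpha+P\beta=\phi$ with $P_{ij}\le 2A_0/N$), and this changes nothing of substance.
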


\begin{proof}
    Let $\alpha=\diag(r)^{1/2}a$, $~\beta=\diag(c)^{1/2}b$, $~\phi=\diag(r)^{-1/2}f$, $~\psi=\diag(c)^{-1/2}h$, and $\kappa=(r^{1/2},-c^{1/2})$. With this notation, the first two equations in \eqref{eq:15} become $\alpha+P\beta=\phi$ and $P^T\alpha+\beta=\psi$. Since $\langle\one_N,f\rangle=\langle\one_N,h\rangle$ says $(\phi,\psi)\perp\kappa$, Lemma \ref{lem:sinkhorn spectral 2} shows that this system has a unique solution $(\alpha,\beta)\in\kappa^\perp$, which proves \eqref{eq:15}. Also by Lemma \ref{lem:sinkhorn spectral 2}, we have
    \[
        (\norm\alpha_2^2+\norm\beta_2^2)^{1/2}\le\frac1{1-\rho}(\norm\phi_2^2+\norm\psi_2^2)^{1/2}.
    \]
    Since all coordinates of $r$ and $c$ lie in the interval $[1/2,3/2]$, this proves \eqref{eq:16}. For the last estimate, note that $0\le P_{ij}\le2A_0/N$, and hence $\alpha+P\beta=\phi$ gives
    \[
        \norm\alpha_\infty\le\norm\phi_\infty+\frac{2A_0}N\norm\beta_1\le\norm\phi_\infty+\frac{2A_0}{\sqrt N}\norm\beta_2,
    \]
    and similarly,
    \[
        \norm\beta_\infty\le\norm\psi_\infty+\frac{2A_0}{\sqrt N}\norm\alpha_2.
    \]
    Combining this with \eqref{eq:16} and again using $r_i,c_j\in[1/2,3/2]$ now gives \eqref{eq:17}.
\end{proof}

We are now ready to prove Proposition~\ref{prop:sinkhorn}.

\begin{proof}[Proof of Proposition \ref{prop:sinkhorn}]
    Let $r = Q\one_N = \one_N+u$ and $c = Q^T\one_N=\one_N+v$, and let $\eta=\norm{(u,v)}_*$. By assumption, $\eta=o(1)$, so for all sufficiently large $N$ every coordinate of $r$ and $c$ lies in the interval $[1/2,3/2]$. Since $\sigma_2(P)\le\rho+o(1)$, we have $\sigma_2(P)\le\frac{1+\rho}2$ for all sufficiently large $N$. 

    We begin by defining, for $a,b\in\mathbb R^N$ the vector $\mathcal R(a,b)=(R_1(a,b),R_2(a,b))\in\mathbb R^{2N}$, where
    \[
        R_1(a,b)_i = \sum_{j=1}^N Q_{ij}(e^{a_i+b_j}-1-a_i-b_j)
    \]
    and
    \[
        R_2(a,b)_j=\sum_{i=1}^N Q_{ij}(e^{a_i+b_j}-1-a_i-b_j).
    \]
    Note that the sums of the coordinates of $R_1(a,b)$ and $R_2(a,b)$ are equal. We will record some estimates on $\mathcal R$ which will be used later. Let $\varphi(t) = e^t-1-t$ and $m =\norm a_\infty+\norm b_\infty$. For $m\le 1$, Taylor expansion of $\varphi(t)$ near $t = 0$ gives $|\varphi(a_i+b_j)|\le Cm(|a_i|+|b_j|)$ for some absolute constant $C$. It follows that $|R_1(a,b)_i|\le Cm(\diag(r)|a|+Q|b|)_i$ and $|R_2(a,b)_j|\le Cm(Q^T|a|+\diag(c)|b|)_j$. Since every row and column sum of $Q$ is at most $3/2$, Schur's test gives $\norm Q_\op\le3/2$. Therefore, for some absolute constant $C_R$,
    \begin{equation}\label{eq:18}
        \norm{R_1(a,b)}_2+\norm{R_2(a,b)}_2\le C_R(\norm a_\infty+\norm b_\infty)(\norm a_2+\norm b_2).
    \end{equation}
    Using $|\varphi(a_i+b_j)|\le Cm^2$, we have $\norm{R_1(a,b)}_\infty+\norm{R_2(a,b)}_\infty\le C_Rm^2$. Thus
    \begin{equation}\label{eq:19}
        \norm{\mathcal R(a,b)}_*\le C_Rm\norm{(a,b)}_*\le C_R\norm{(a,b)}_*^2
    \end{equation}
    whenever $\norm{(a,b)}_*\le 1$. Moreover, assuming both $\norm{(a,b)}_*$ and $\norm{(a',b')}_*$ are at most some $\delta\le1$, the mean value theorem gives $|\varphi(s)-\varphi(t)|\le C\delta|s-t|$ for all $|s|,|t|\le\delta$. So, repeating the preceding estimates yields
    \begin{equation}\label{eq:20}
        \norm{\mathcal R(a,b)-\mathcal R(a',b')}_*\le C_R\delta\norm{(a-a',b-b')}_*.
    \end{equation}

    The next step will involve constructing a contraction mapping in order to prove the existence of the desired $x,y$. Define $\mathcal D = \{(f,h)\in\mathbb R^N\times\mathbb R^N : \langle\one_N,f\rangle=\langle\one_N,h\rangle\}$ and $\mathcal H = \{(x,y)\in\mathbb R^N\times\mathbb R^N:\langle r,x\rangle=\langle c,y\rangle\}$. For $(f,h)\in\mathcal D$, let $\mathcal L(f,h)\in\mathcal H$ denote the unique pair given by Lemma \ref{lem:sinkhorn inverse}. The map $\mathcal L : \mathcal D\to\mathcal H$ is linear. Let $C_L$ be a constant for which both \eqref{eq:16} and \eqref{eq:17} hold.

    The matrix $\diag(e^x)Q\diag(e^y)$ is doubly stochastic if and only if $\diag(e^x)Qe^y=\one_N$ and $\diag(e^y)Q^Te^x=\one_N$. Since $r = \one_N+u$ and $c = \one_N+v$, we may rewrite these equations as
    \begin{equation}\label{eq:21}
        \begin{cases}
            \diag(r)x+Qy+u+R_1(x,y)=0,\\
            Q^Tx+\diag(c)y+v+R_2(x,y)=0.
        \end{cases}
    \end{equation}
    We are going to formulate these equations as a fixed point problem, then apply the contraction mapping theorem. Note that $\langle\one_N,u\rangle=\sum_{1\le i,j\le N}Q_{ij}-N=\langle\one_N,v\rangle$ and for any $x,y\in\mathbb R^N$, $\langle\one_N,R_1(x,y)\rangle=\langle\one_N,R_2(x,y)\rangle$. Thus the function $\Phi : \mathcal H\to\mathcal H$ given by
    \[
        \Phi(x,y)=-\mathcal L(u+R_1(x,y),v+R_2(x,y))
    \]
    is well defined. By the definition of $\mathcal L$, a pair $(x,y)\in\mathcal H$ is a fixed point of $\Phi$ if and only if it satisfies \eqref{eq:21}. Let $\mathcal B = \{(x,y)\in\mathcal H:\norm{(x,y)}_*\le 2C_L\eta\}$. Since $\eta=o(1)$, we have $2C_L\eta\le1$ for all sufficiently large $N$. If $(x,y)\in\mathcal B$, then Lemma \ref{lem:sinkhorn inverse} and \eqref{eq:19} imply
    \begin{align*}
        \norm{\Phi(x,y)}_* &\le C_L(\norm{(u,v)}_*+\norm{\mathcal R(x,y)}_*)\\
        &\le C_L\eta + C_LC_R\norm{(x,y)}_*^2\\
        &\le C_L\eta+C_LC_R(2C_L\eta)^2\\
        &= C_L\eta(1+4C_L^2C_R\eta).
    \end{align*}
    For all sufficiently large $N$, we have $C_L\eta(1+4C_L^2C_R\eta) \le 2C_L\eta$, and hence $\Phi(\mathcal B)\subseteq\mathcal B$.

    Now suppose $(x,y),(x',y')\in\mathcal B$. Since $\mathcal L$ is linear, we have by Lemma \ref{lem:sinkhorn inverse} that
    \begin{align*}
        \norm{\Phi(x,y)-\Phi(x',y')}_*&\le C_L\norm{\mathcal R(x,y)-\mathcal R(x',y')}_* \\&\le 2C_L^2C_R\eta\norm{(x-x',y-y')}_*,
    \end{align*}
    where the second inequality follows from the Lipschitz estimate in \eqref{eq:20} with $\delta=2C_L\eta$. Since $\eta=o(1)$, the Lipschitz constant $2C_L^2C_R\eta$ is at most $1/2$ for all sufficiently large $N$. 

    The set $\mathcal B$ is closed in $(\mathcal H,\norm \cdot_*)$ and is therefore complete. By the contraction mapping theorem, there is a unique fixed point $(x,y)\in\mathcal B$. Moreover, by \eqref{eq:21}, the matrix $\diag(e^x)Q\diag(e^y)$ is doubly stochastic and
    \[
        \norm x_\infty+\norm y_\infty\le\norm{(x,y)}_*\le 2C_L\eta=o(1).
    \]
    
    It remains to show that $\norm x_2+\norm y_2=O(\norm u_2+\norm v_2)$. By \eqref{eq:16} and then \eqref{eq:18},
    \begin{align*}
        \norm x_2+\norm y_2&\le C_L(\norm u_2+\norm v_2+\norm{R_1(x,y)}_2+\norm{R_2(x,y)}_2)\\
        &\le C_L(\norm u_2+\norm v_2)+C_LC_R(\norm x_\infty+\norm y_\infty)(\norm x_2+\norm y_2).
    \end{align*}
    Since $\norm x_\infty+\norm y_\infty=o(1)$, we have for sufficiently large $N$ that
    \[
        \norm x_2+\norm y_2\le \frac{C_L}{1-C_LC_R(\norm x_\infty+\norm y_\infty)}(\norm u_2+\norm v_2)=O(\norm u_2+\norm v_2),
    \]
    which completes the proof of Proposition \ref{prop:sinkhorn}.
\end{proof}

\section{Inclusion-exclusion}\label{sec:inclusion-exclusion}

The aim of this section is to prove the following.

\begin{proposition}\label{prop:cyc bound}
    Let $A_0 < \infty$ and $\rho < 1$. If $D = J_N + B$ is an $N\times N$ doubly stochastic matrix with $\norm D_\mx \le A_0/N$ and $\norm B_\op\le\rho$, then
    \[
        \frac{N^N\cyc D}{(N-1)!} = (1+o(1))\frac{\det(I-B)}{\sqrt{\det(I-B^TB)}}.
    \]
\end{proposition}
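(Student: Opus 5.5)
The plan is to start from Liu's identity
\[
    \cyc D=\frac1N\sum_{\emptyset\neq U\subseteq[N]}|U|(-1)^{|U|-1}\det D[U]\per D[U^c],
\]
group the terms by $k=|U|$, and show that only $k=O(1)$ (or at most $k\le N^{c}$ for small $c$) contributes. The heuristic to make rigorous is this. For small $k$ and every $U$ of size $k$, the factor $\per D[U^c]$ should be approximately independent of $U$:
\[
    \per D[U^c]=(1+o(1))\frac{(N-k)!}{N^{N-k}}\det(I-B^TB)^{-1/2},
\]
uniformly in $U$. Substituting this, the sum reduces to
\[
    \det(I-B^TB)^{-1/2}\sum_{k\ge1}(-1)^{k-1}k\,\frac{N^k(N-k)!}{N!}\,e_k(D),\qquad e_k(D)=\sum_{|U|=k}\det D[U]=[t^k]\det(I+tD).
\]
Since $B\one_N=B^T\one_N=0$, the spectrum of $D=J_N+B$ is $\{1\}$ together with that of $B$ restricted to $\one^\perp$. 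Hence $\det(I-sD)=(1-s)\det(I-sB)$.

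Using the Gamma trick of Lemma \ref{lem:3.4}, $N^k(N-k)!/N!=\E[(N/T)^k]$, so the sum becomes $\E[g(N/T)]$ with
\[
    g(s)=-s\frac{d}{ds}\bigl[(1-s)\det(I-sB)\bigr].
\]
Here $g(1)=\det(I-B)$. Because $N/T\to1$ and $g$ is smooth, with bounded derivatives on a neighbourhood of $1$ (by $\norm B_\op\le\rho$ and $\norm B_F\le A_0$, exactly as $\Delta$ was handled in Section \ref{sec:3.3}), this gives $\det(I-B)\det(I-B^TB)^{-1/2}(1+o(1))$. Multiplying by $N^N/(N\cdot(N-1)!)$ matches the claimed normalisation. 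The factor $\det(I-B)$ is bounded below, since $\norm B_\op\le\rho$ forces $|\det(I-B)|\ge\prod(1-\sigma_\ell)\ge e^{-C}$, so additive errors of size $o(1)$ can be converted to multiplicative ones.

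The steps, in order, are as follows.

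\emph{Step 1 (uniform permanent estimate for principal minors).} For $|U|=k$ small, $D[U^c]$ has row and column sums $1-O(k/N)$. Rescale it by $N/(N-k)$ and apply Proposition \ref{prop:sinkhorn}; the deviations $u,v$ have $\ell_\infty$-norm $O(k/N)$. The Sinkhorn factors are then $\exp(O(k^2/N)+\dots)$. Next apply Proposition \ref{prop:permanent bound} to the scaled matrix. Its deviation matrix is $B[U^c]$ plus a perturbation of Frobenius norm $o(1)$, so $\det(I-B^TB)^{-1/2}$ changes only by $1+o(1)$, uniformly in $U$, as Remark \ref{remark:uniformity} permits.

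\emph{Step 2 (small $k$).} Sum these estimates against $\det D[U]$, which need not be positive. To avoid losing the cancellations in $e_k(D)$, keep the exact identity $\sum_U\det D[U]=e_k(D)$ and control only the error terms. For this, bound $\sum_{|U|=k}|\det D[U]|$ by something like $C^k/k!$, using Hadamard's inequality on $J_N+B$ with rows of norm $O(N^{-1/2})$, or the Cauchy--Binet expansion. This makes the $o(1)$ relative errors summable in $k$.

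\emph{Step 3 (large $k$).} Show that the terms with $k>k_0(N)$ are negligible. Here I would bound $\per D[U^c]$ crudely by Fact \ref{fact:bregman}, after normalising the rows, and $|\det D[U]|$ by Hadamard's inequality. This gives a total contribution of roughly $\sum_{k>k_0}k\,C^k/k!\cdot(N-k)!N^{k}/N!$, which is small once $k_0\to\infty$ slowly, relative to the main term $(N-1)!/N^N$.

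\emph{Main obstacle.} I expect Step 2 to be the hardest, because the identity relies on delicate cancellation. The dominant $k=1$ term is roughly $\tr D\approx1$, but the full alternating sum must reproduce $\det(I-B)$ exactly. In particular the factor $(1-s)$ kills the leading contribution, and the answer comes from its derivative. So the errors in Step 1 must be genuinely $o(1)$ relative to the final answer, not merely to individual terms. If the termwise approach leaks, I would fall back to proving Step 1 in the sharper form that the dependence on $U$ factors as a smooth function of $k/N$ times a common constant. That form can then be absorbed into $\E[g(N/T)]$ in the same way as the exact Gamma identity.
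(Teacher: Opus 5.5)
Your route is the paper's: Liu's identity, a split at a threshold $k_0$, Sinkhorn scaling plus Proposition \ref{prop:permanent bound} for small $U$ (Lemma \ref{lem:small U per}), Hadamard, Bregman and Van der Waerden for large $U$ (Lemma \ref{lem:big U per}), and $\det(I-tD)=(1-t)\det(I-tB)$ for the determinant sum (Lemma \ref{lem:det(I-B)}).

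\textbf{The error: the lower bound on $\det(I-B)$.} You need $\det(I-B)$ bounded below to turn additive errors into multiplicative ones, and your justification for this is wrong. The inequality $|\det(I-B)|\ge\prod_\ell(1-\sigma_\ell)$ holds, but $\prod_\ell(1-\sigma_\ell)\ge e^{-C}$ does not. The reason is that $-\log(1-\sigma)$ is first order in $\sigma$, while $\sum_\ell\sigma_\ell$ is only bounded by $\sqrt N\norm B_F$. A concrete counterexample is $D=J_N+K$ with $K$ the normalized Seidel matrix of $T_N$, which is doubly stochastic with $\norm K_\op\le 2/\pi$. Its singular values are $N^{-1}\cot\frac{\pi j}{2N}$ for odd $j$ and $N^{-1}\tan\frac{\pi j}{2N}$ for even $j$. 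Hence $\sum_\ell\sigma_\ell\asymp\log N$ and $\prod_\ell(1-\sigma_\ell)\to 0$, whereas $\det(I-K)\to\cosh 1$.

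The first-order term has to be controlled through the trace and the entrywise bound. Since $\det(I-B)>0$, you can write $\log\det(I-B)=-\sum_{m\ge1}\tr(B^m)/m$. Here $|\tr B|\le N\norm B_\mx\le A_0+1$, and $|\tr(B^m)|\le\norm B_F^2\rho^{m-2}$ for $m\ge 2$. This is the paper's remark. The same issue affects your claim that $g$ has bounded derivatives ``exactly as $\Delta$'': $\Delta$ involves only $B^TB$, whereas $\det(I-sB)$ carries the first-order term $s\tr B$.

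\textbf{Hadamard bound.} Apply Hadamard's inequality to the $k\times k$ block $D[U]$, whose rows have norm at most $A_0\sqrt k/N$. Rows of norm $O(N^{-1/2})$ would make the sum over $U$ diverge. This gives $\sum_{|U|=k}|\det D[U]|\le(eA_0/\sqrt k)^k$, not $C^k/k!$. The correct bound is still summable, so Step 2 survives.

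\textbf{Choice of threshold in Step 3.} The crude bounds lose a factor $N^{O(1)}e^k$ in $\per D[U^c]/\per D$: Bregman with row bounds $1/r$, $r\asymp N$, is compared against Van der Waerden. So the tail is only bounded by $N^{C}\sum_{k>k_0}(C/\sqrt k)^k$. This tends to zero only if $k_0\log k_0$ exceeds a large multiple of $\log N$, so letting $k_0\to\infty$ slowly is not enough. Step 1 needs $k_0=o(\sqrt N)$, so take $k_0=\log^2 N$ as the paper does.

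\textbf{The Gamma detour is unnecessary.} For $k\le\log^2N$ one has $N^k(N-k)!/N!=1+O(k^2/N)$. So your Step 1 estimate is exactly the paper's $\per D[U^c]=(1+o(1))\per D$. You therefore do not need the function $g$ or the Gamma average, which is taken over a degree-$N$ polynomial and would need its own tail bound. Your ``main obstacle'' also disappears. Uniform $o(1)$ relative errors, weighted by $\sum_U|U||\det D[U]|=O(1)$, give an $o(1)$ additive error. That error is negligible once $\det(I-B)=\Omega(1)$ is established correctly.
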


Let us first recall that if $M$ is the adjacency matrix of a tournament $T$, then $\per M$ counts the number of $1$-factors of $T$, and $\cyc M$ counts the number of Hamiltonian cycles of $T$. Since our goal is to count the number of Hamiltonian cycles, but the estimates of Section \ref{sec:permanent bounds} are stated in terms of permanents, it will be helpful to rewrite $\cyc M$ in terms of determinants and permanents. For this purpose, we will use the following result of Liu \cite{Liu1991}, which we reformulate in our setting for directed graphs. This identity, which may be viewed as a form of inclusion-exclusion for $\cyc M$ over the principal minors of $M$, has also been rederived in the setting of Hamiltonian cycle polynomials \cite{SawczukGnang2025}. We include its short proof for completeness.

\begin{lemma}[{\cite[Theorem 2]{Liu1991}}]\label{lem:inclusion-exclusion}
    For every $N\times N$ matrix $M$,
    \[
        N\cyc M = \sum_{\emptyset\neq U\subseteq [N]} |U|(-1)^{|U|-1}\det M[U]\per M[U^c].
    \]
\end{lemma}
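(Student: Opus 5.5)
The plan is to expand both sides as signed sums over permutations and match coefficients. Write $\det M[U] = \sum_{\tau\in S_U}\mathrm{sgn}(\tau)\prod_{i\in U}M_{i,\tau(i)}$ and $\per M[U^c]=\sum_{\pi\in S_{U^c}}\prod_{i\in U^c}M_{i,\pi(i)}$. A pair $(\tau,\pi)$ glues into a single permutation $\sigma=\tau\sqcup\pi\in S_N$ with $U$ a union of cycles of $\sigma$, and the product of entries is exactly $\prod_i M_{i,\sigma(i)}$. Conversely, each $\sigma\in S_N$ together with a nonempty union $U$ of its cycles determines the pair uniquely. So the right-hand side equals
\[
\sum_{\sigma\in S_N}\Big(\prod_{i=1}^N M_{i,\sigma(i)}\Big)\, c(\sigma),\qquad c(\sigma)=\sum_{\emptyset\ne W\subseteq \mathrm{cyc}(\sigma)} |U_W|(-1)^{|U_W|-1}\mathrm{sgn}(\sigma|_{U_W}),
\]
where $W$ ranges over nonempty sets of cycles of $\sigma$ and $U_W$ is their union. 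It then suffices to show that $c(\sigma)=N$ when $\sigma$ is cyclic and $c(\sigma)=0$ otherwise.

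For the coefficient computation, I will use that a cycle of length $\ell$ has sign $(-1)^{\ell-1}$. Hence for $W=\{C_1,\dots,C_m\}$ with lengths $\ell_1,\dots,\ell_m$ and $L=\sum\ell_j$,
\[
(-1)^{L-1}\mathrm{sgn}(\sigma|_{U_W}) = (-1)^{L-1}(-1)^{L-m} = (-1)^{m-1}.
\]
So $c(\sigma)=\sum_{\emptyset\ne W}(-1)^{|W|-1}\sum_{C\in W}|C|$. Exchanging the sums, each cycle $C$ contributes $|C|\sum_{W\ni C}(-1)^{|W|-1}$. If $\sigma$ has $k$ cycles, the inner sum over subsets of the other $k-1$ cycles is $\sum_{j=0}^{k-1}\binom{k-1}{j}(-1)^j$. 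This equals $1$ if $k=1$ and $0$ if $k\ge2$.

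Therefore $c(\sigma)=N$ for cyclic $\sigma$, where the only cycle has length $N$, and $c(\sigma)=0$ otherwise. This gives $N\cyc M$, as claimed. The only delicate point is the bookkeeping of the bijection between pairs $(U,\tau,\pi)$ and pairs ($\sigma$, nonempty union of cycles), together with the sign identity. Neither is a real obstacle, since the identity holds for arbitrary matrices over any commutative ring.
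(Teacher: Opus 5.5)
Your proposal is correct and follows essentially the same route as the paper. Both expand $\det M[U]$ and $\per M[U^c]$, regroup by the glued permutation $\sigma$ (with $U$ a union of its cycles), use that an $\ell$-cycle has sign $(-1)^{\ell-1}$ to reduce the coefficient to $\sum_{\emptyset\neq W}(-1)^{|W|-1}\sum_{C\in W}|C|$, and exchange the sums to obtain the factor $(1-1)^{k-1}$ for each cycle; you only make explicit the bijection $(U,\tau,\pi)\leftrightarrow(\sigma,U)$ that the paper leaves implicit.
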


\begin{proof}
    By expanding the expressions for $\det M[U]$ and $\per M[U^c]$ and collecting terms according to permutations $\sigma\in S_N$, the right hand side can be rewritten as
    \[
        \sum_{\sigma\in S_N}\Big(\sum_{\substack{\emptyset\neq U\subseteq[N]\\\sigma(U)=U}} |U|(-1)^{|U|-1}\mathrm{sgn}(\sigma|_U)\Big)\Big(\prod_{i=1}^N M_{i,\sigma(i)}\Big).
    \]
    Suppose that some fixed $\sigma\in S_N$ has a cycle decomposition of the form $C_1, \dots, C_k$ for some $k \ge 1$, and let $|C_i| = \ell_i$ for $i\in[k]$. The condition that $\sigma(U) = U$ implies that $U$ is a union of disjoint cycles of the permutation $\sigma$, and hence $U = \bigcup_{i\in S}C_i$ for some nonempty $S\subseteq[k]$. For such a choice of $U$, we have $\mathrm{sgn}(\sigma|_U) = (-1)^{|U|-|S|}$, so its contribution $|U|(-1)^{|U|-1}\mathrm{sgn}(\sigma|_U)$ to the coefficient of the monomial $\prod_{i=1}^NM_{i,\sigma(i)}$ is $(\sum_{i\in S}\ell_i)(-1)^{|S|-1}$. It follows that the coefficient of $\prod_{i=1}^NM_{i,\sigma(i)}$ is
    \[
        \sum_{\emptyset\neq S\subseteq[k]}\Big(\sum_{i\in S}\ell_i\Big)(-1)^{|S|-1} = \sum_{i=1}^k \ell_i\sum_{T\subseteq[k]\setminus\{i\}}(-1)^{|T|},
    \]
    which is $N$ when $k = 1$ (and hence $\sigma$ is counted by $\cyc M$) and zero when $k > 1$. 
\end{proof}

Given any $D$ as in Proposition \ref{prop:permanent bound}, the idea of the proof is now to apply Lemma \ref{lem:inclusion-exclusion} with $M = D$, and analyze the contributions from each of the principal minors of $D$ corresponding to some nonempty subset $U\subseteq [N]$. We do this in two separate cases based on the size of $U$. As we will see, the significant case is when $|U|$ is small (i.e. polylogarithmic in $N$). In this case, $\per D[U^c]$ is nearly the permanent of $D$, so our strategy will be to use Sinkhorn scaling to adjust $D[U^c]$ into a doubly stochastic matrix, to which we can then apply Proposition \ref{prop:permanent bound}. In what follows, we denote by $o_\op(1)$ an (appropriately sized) matrix with spectral norm tending to $0$ as $N$ tends to infinity.

\begin{lemma}\label{lem:small U per}
    Under the assumptions of Proposition \ref{prop:permanent bound}, any subset $U\subseteq[N]$ with $|U| \le \log^2 N$ satisfies $\per D[U^c] = (1+o(1))\per D$.
\end{lemma}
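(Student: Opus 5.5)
The plan is to compare $\per D[U^c]$ with $\per D$ by Sinkhorn-rescaling $D[U^c]$ to a doubly stochastic matrix, applying Proposition \ref{prop:permanent bound} to both matrices, and checking that the two determinant factors and the scaling factor are all $1+o(1)$. Set $M=|U^c|=N-|U|$ and consider $Q=\frac{N}{M}D[U^c]$, an $M\times M$ nonnegative matrix. Since $D$ is doubly stochastic with $\norm D_\mx\le A_0/N$, every row and column of $D[U^c]$ sums to $1-O(|U|/N)$. Hence $u=Q\one_M-\one_M$ and $v=Q^T\one_M-\one_M$ satisfy $\norm u_\infty+\norm v_\infty=O(\log^2N/N)$, and so $\norm u_2+\norm v_2=O(\log^2N/\sqrt N)$. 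Also $\norm Q_\mx\le 2A_0/M$.

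\textbf{Step 1: verify the hypotheses of Proposition \ref{prop:sinkhorn}.} The $\sigma_2$ condition should follow by perturbation. The matrix $P$ equals $Q$ up to diagonal factors $1+o(1)$. Moreover $Q=J_M+B[U^c]+E$, where $E$ comes from the renormalization and has operator norm $o(1)$. Since $J_M$ has rank one, $\sigma_2(P)\le \norm{B[U^c]}_\op+o(1)\le\rho+o(1)$.

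\textbf{Step 2: apply Proposition \ref{prop:sinkhorn}.} This gives $x,y$ with $D'=\diag(e^x)Q\diag(e^y)$ doubly stochastic, where $\norm x_\infty+\norm y_\infty=o(1)$ and $\norm x_2+\norm y_2=O(\log^2N/\sqrt N)$. Then
\[
\per D[U^c]=(M/N)^M e^{-\sum x_i-\sum y_j}\per D'.
\]
Here $(M/N)^M=e^{-|U|+o(1)}$, because $|U|^2/N=o(1)$. Also $\frac{M^M}{M!}\cdot\frac{N!}{N^N}=e^{|U|+o(1)}$ by Stirling. So these two factors cancel exactly, which is the reason for normalizing by $N/M$.

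\textbf{Step 3: control the scaling factor.} To bound $\sum x_i+\sum y_j$, I would use the linearized equation from the contraction proof: $(x,y)=-\mathcal L(u,v)+O(\text{quadratic})$. The quadratic remainder has $\ell_1$ mass at most $\sqrt M\cdot C\norm{(x,y)}_\infty(\norm x_2+\norm y_2)=o(1)$. For the linear part, summing the first equation of \eqref{eq:15} gives $\langle r,a\rangle+\langle c,b\rangle=\langle\one,f\rangle$ up to $O(\norm u_\infty(\norm a_1+\norm b_1))$, and the normalization $\langle r,a\rangle=\langle c,b\rangle$ then pins down $\sum a+\sum b\approx \langle\one,u\rangle$. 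But $\langle \one,u\rangle=\sum Q_{ij}-M=O(|U|^2/N)=o(1)$. Hence $e^{-\sum x-\sum y}=1+o(1)$.

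\textbf{Step 4: compare the determinant factors.} Write $D'=J_M+B'$. Then $\norm{B'}_\mx=O(1/M)$, and $\norm{B'}_\op\le\rho'<1$ for large $N$ by the same perturbation bound, so Proposition \ref{prop:permanent bound} applies to $D'$. It remains to show $\det(I-B'^TB')=(1+o(1))\det(I-B^TB)$. Since both $\det(I-X)$ are $\exp(-\sum_k \tr (X^k)/k)$ with geometric convergence in $\rho$, it suffices that $\norm{B'^TB'-(B^TB)[U^c]}_{\text{tr}}$ and $\norm{(B^TB)[U^c]}$ versus the full matrix differ by $o(1)$ in trace norm. I would bound this by Frobenius-norm estimates. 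The deleted rows and columns of $B$ carry Frobenius mass $O(\sqrt{|U|/N})$. The difference $B'-B[U^c]$ has Frobenius norm $O(\norm x_2+\norm y_2+\text{(entry size)}\sqrt{M}\norm{u}_\infty)=o(1)$. Combined with the Hölder inequality $|\tr(XY)|\le\norm X_F\norm Y_F$, and with $\norm B_F\le A_0$, these bounds give a trace difference of $o(1)$ at each power $k$, uniformly in $k$ up to the factor $\rho^{k}$.

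\textbf{Main obstacle.} I expect Step 3, showing the scaling factor is $1+o(1)$, to be the hardest. Proposition \ref{prop:sinkhorn} only controls $\ell_2$ norms, which give $\sum x_i=O(\log^2 N)$ at face value. Extracting a first-order cancellation therefore requires revisiting the fixed-point equation as described, rather than using Proposition \ref{prop:sinkhorn} as a black box.
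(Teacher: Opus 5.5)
Your proof is correct and follows essentially the paper's route: Sinkhorn-scale $D[U^c]$, apply Proposition~\ref{prop:permanent bound} to both matrices, compare the determinant factors via a Frobenius-norm estimate on $\widetilde B-B$, and cancel the scaling factor against Stirling. Your $N/M$ prefactor only moves the factor $e^{-|U|}$ from the Sinkhorn scaling into $(M/N)^M$. One slip: the Stirling ratio you need is $\frac{M!}{M^M}\cdot\frac{N^N}{N!}=e^{|U|+o(1)}$, and you wrote its reciprocal.

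Your Step~3 is easier than you fear. As in the paper, sum all entries of the doubly stochastic matrix, with $s=\sum_i(x_i+y_i)$:
$$M=\sum_{i,j}Q_{ij}e^{x_i+y_j}=\sum_{i,j}Q_{ij}+s+\langle u,x\rangle+\langle v,y\rangle+O(\norm x_2^2+\norm y_2^2).$$
So the black-box $\ell_2$ bound of Proposition~\ref{prop:sinkhorn} already gives $s=M-\sum_{i,j}Q_{ij}+o(1)$, with no need to reopen the fixed-point argument.
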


\begin{proof}
    Let $M = N - |U|$ and consider the nonnegative $M\times M$ matrix $Q = D[U^c]$. We begin by verifying that $Q$ satisfies the hypotheses of Proposition \ref{prop:sinkhorn}. Since $Q$ is a submatrix of $D$ and $\norm B_\mx \le A_0/N$, we have $\norm Q_\mx \le \norm D_\mx \le (A_0+1)/N \le (A_0+1)/M$. Recall that $u,v\in\mathbb R^M$ are the vectors satisfying $Q\one_M = \one_M+u$ and $Q^T\one_M = \one_M + v$. For each $i\in U^c$, the corresponding coordinate of $u$ is $u_i = -\sum_{j\in U} D_{ij}$, since every row of $D$ sums to one. Therefore, $\norm u_2 \le (A_0+1)|U|/\sqrt N$ and $\norm u_\infty \le (A_0+1)|U|/N$. Similarly, $v_i = -\sum_{j\in U}D_{ji}$, so the same bounds hold for $v$. Since $|U| \le \log^2 N$, it follows that $\norm u_2 + \norm v_2 = o(1) =  o(\sqrt M)$ and
    \[
        \norm u_\infty+\norm v_\infty + \frac{\norm u_2 + \norm v_2}{\sqrt M} = o(1).
    \]
    It remains to check that $P = \diag(\one_M+u)^{-1/2} Q \diag(\one_M+v)^{-1/2}$ satisfies the condition $\sigma_2(P) \le \rho + o(1)$. We proceed in two steps, first showing that $\sigma_2(P) \le \sigma_2(Q) + o(1)$, and then upper-bounding $\sigma_2(Q)$ by $\rho$. 
    
    Since $\norm u_\infty = o(1)$, we have $\diag(\one_M + u)^{-1/2} = I + o_\op(1)$, and similarly $\diag(\one_M + v)^{-1/2} = I + o_\op(1)$. For convenience, denote these by $L$ and $R$, respectively. Since $Q$ is a submatrix of the doubly stochastic matrix $D$, we have by Schur's test that $\norm Q_\op \le \sqrt{\norm Q_1\norm Q_\infty} \le 1$. It follows from $P = LQR$ and $\norm Q_\op \le 1$ that
    \[
        \norm{P-Q}_\op \le \norm{L-I}_\op\cdot\norm Q_\op\cdot\norm R_\op + \norm Q_\op\cdot\norm{R - I}_\op = o(1),
    \]
    where we have used that the spectral norm is submultiplicative. Recalling that singular values are $1$-Lipschitz with respect to the spectral norm (cf. \cite[3.3.16(a)]{HornJohnson1991}), this implies $\sigma_2(P)\le\sigma_2(Q)+o(1)$. It remains to bound $\sigma_2(Q)$. For this, note that $Q = J_N[U^c] + B[U^c]$, where $J_N[U^c]$ has rank one. Viewing $J_N[U^c]$ as a rank one approximation of $Q$, the Eckart-Young theorem gives
    \[
        \sigma_2(Q) = \min_{\mathrm{rank}(A) \le 1}\norm{Q-A}_\op \le \norm {Q - J_N[U^c]}_\op = \norm{B[U^c]}_\op\le \norm B_\op \le \rho,
    \]
    so $\sigma_2(P) \le \rho + o(1)$. Therefore, $Q$ satisfies the hypotheses of Proposition \ref{prop:sinkhorn} with constants $A_0 + 1$ and $\rho$. Applying the proposition to $Q$ gives $x,y\in\mathbb R^M$ such that
    \[
        \widetilde D = \diag(e^x) Q \diag(e^y)
    \]
    is doubly stochastic, $\norm x_2+\norm y_2 = o(1)$, and $\norm x_\infty+\norm y_\infty = o(1)$.
    
    Next, we show that $\widetilde D$ satisfies the hypotheses of Proposition \ref{prop:permanent bound}. Let $\widetilde D = J_M + \widetilde B$. Since $\norm x_\infty + \norm y_\infty = o(1)$, we have that $e^{x_i} = 1+o(1)$ and $e^{y_i} =1 + o(1)$ for every $i\in[M]$. Therefore, using that $\norm Q_\mx \le (A_0+1)/N$, we have
    \[
        \norm{\widetilde B}_\mx \le \frac{1}{M} + \norm{\widetilde D}_\mx \le \frac1M + (1+o(1))\norm Q_\mx \le \frac{2A_0 + 3}{M}.
    \]
    Since $\diag(e^x) = I + o_\op(1)$, and similarly for $\diag(e^y)$, our earlier argument shows that $\norm{\widetilde D - Q}_\op \le o(1)$, and consequently $\sigma_2(\widetilde D)\le\sigma_2(Q) + o(1) \le \rho+o(1)$. Since $\widetilde D$ is doubly stochastic, we now have (for all sufficiently large $N$)
    \[
        \norm{\widetilde B}_\op = \sigma_2(\widetilde D) \le \rho + o(1) \le \frac{\rho+1}{2} < 1,
    \]
    so $\widetilde D$ satisfies the hypotheses of Proposition \ref{prop:permanent bound} with constants $2A_0+3$ and $(\rho+1)/2$. Applying the proposition to $\widetilde D$ gives
    \[
        \frac{M^M}{M!}\per\widetilde D = (1+o(1))\det(I - \widetilde B^T\widetilde B)^{-1/2}.
    \]

    We now show that the right hand side is approximately $\det(I - B^TB)^{-1/2}$. In the forthcoming comparison, we view $\widetilde B$ as an $N\times N$ matrix by padding it with zero rows and zero columns on $U$, which does not change $\det(I - \widetilde B^T\widetilde B)$. We first note that $\norm{\widetilde B-B}_F = o(1)$. Indeed, since $\norm B_\mx \le A_0/N$, deleting the rows and columns indexed by $U$ changes $B$ by $O(\sqrt{|U|/N}) = o(1)$. On $U^c\times U^c$, we have
    \[
        \widetilde B - B[U^c] = (\widetilde D - Q) + (J_N[U^c] - J_M).
    \]
    The second term satisfies $\norm{J_N[U^c]-J_M}_F = |U|/N = o(1)$, while the bounds $\norm Q_\mx = O(1/N)$, $\norm x_2 + \norm y_2 = o(1)$, and $|e^{x_i+y_j}-1| = O(|x_i|+|y_j|)$ imply $\norm{\widetilde D - Q}_F = o(1)$. The triangle inequality thus gives $\norm{\widetilde B - B}_F = o(1)$. Moreover, we also have the trivial observation that $\norm B_F + \norm{\widetilde B}_F = O(1)$, since both matrices have $N^2$ entries of size $O(1/N)$.
    
    Now let $H = B^TB$ and $\widetilde H = \widetilde B^T\widetilde B$. Since $\norm B_\op,\norm{\widetilde B}_\op < (\rho + 1)/2$, we may choose $r = (\rho+1)/2 < 1$ so that $\norm H_\op, \norm{\widetilde H}_\op \le r^2$. Thus every eigenvalue of $H$ and $\widetilde H$ lies in the interval $[0, r^2]$, so the eigenvalues of $I - H$ and $I - \widetilde H$ are all at least $1 - r^2 > 0$. Let $H_t = (1-t)H + t\widetilde H$ for $0\le t\le 1$ be the interpolation between $H$ and $\widetilde H$ (note that $I - H_t$ also has all eigenvalues at least $1-r^2$). Applying Jacobi's identity, we have
    \[
        \frac{d}{dt}\log\det(I-H_t) = -\tr\left((I-H_t)^{-1}(\widetilde H - H)\right).
    \]
    Since every eigenvalue of $I - H_t$ is at least $1 - r^2$, we have $\norm{(I-H_t)^{-1}}_\op \le (1-r^2)^{-1}$. By Cauchy-Schwarz, the magnitude of the right hand side is
    \begin{align*}
        \left|\tr\left((I-H_t)^{-1}(\widetilde H - H)\right)\right| &\le \left|\tr\left((I-H_t)^{-1}(\widetilde B-B)^T\widetilde B\right)\right| \\&\qquad+ \left|\tr\left((I-H_t)^{-1}B^T(\widetilde B - B)\right)\right|\\
        &\le \norm{(I-H_t)^{-1}}_\op\cdot\norm{\widetilde B - B}_F\cdot(\norm{\widetilde B}_F+\norm B_F)\\
        &= o(1),
    \end{align*}
    uniformly over all $0\le t\le 1$. Finally, integrating Jacobi's identity from $t = 0$ to $t = 1$ yields
    \begin{align*}
        \log\det(I-\widetilde H) - \log\det(I - H) = -\int_0^1 \tr\left((I-H_t)^{-1}(\widetilde H-H)\right)\mathrm dt = -o(1),
    \end{align*}
    and exponentiation gives $\det(I-\widetilde B^T\widetilde B)^{-1/2} = (1+o(1)) \det(I - B^TB)^{-1/2}$.
    
    It remains to account for the effect of Sinkhorn scaling on $\per Q$. Define the quantity $s = \sum_{i=1}^M(x_i+y_i)$, so that $\per \widetilde D = e^s\per Q$. Since $D$ is doubly stochastic and $\sum_{i,j\in U}D_{ij} = O(|U|^2/N) = o(1)$, the sum of the entries of $Q$ is
    \[
        \sum_{i,j\in U^c}Q_{ij} = N - 2|U| + \sum_{i,j\in U}D_{ij} = M - |U| + o(1).
    \]
    On the other hand, as $\widetilde D$ is doubly stochastic, the sum of its entries is $M$. Over all $i,j\in U^c$, expanding $e^{x_i + y_j}$ using Taylor series (and some computation) now gives
    \[
        M = \sum_{i,j\in U^c}Q_{ij} + s + o(1),
    \]
    where the error in the linear term was $O((\norm u_2 + \norm v_2)(\norm x_2 + \norm y_2)) = o(1)$ and the quadratic remainder was $O(\norm x_2^2 + \norm y_2^2) = o(1)$. Therefore, $s = |U| - o(1)$.

    Finally, applying Proposition \ref{prop:permanent bound} to $D$ and using the determinant comparison above, we obtain
    \[
        \frac{\per Q}{\per D} = (1+o(1)) e^{-s}\frac{M!/M^M}{N!/N^N} = (1+o(1))e^{-|U|+o(1)}e^{|U|-o(1)} = 1+o(1),
    \]
    where the second equality follows from Stirling's formula and $|U| \le \log^2 N$. This concludes the proof of the lemma.
\end{proof}

We now work to bound the sum of the contributions from all larger $U$. To do so, we will use standard bounds on determinants and permanents in the form of Hadamard's inequality, Fact \ref{fact:egorychev-falikman}, and Fact \ref{fact:bregman}.

\begin{lemma}\label{lem:big U per}
    Under the assumptions of Proposition \ref{prop:permanent bound}, there exists a constant $C = C(A_0)$ such that, for every $1 \le k \le N$,
    \[
        \frac{1}{\per D}\sum_{|U| = k}k|\det D[U]|\per D[U^c]\le N^C\left(\frac{C}{\sqrt k}\right)^k.
    \]
    Moreover,
    \[
        \sum_{|U|=k}k|\det D[U]|\le k\left(\frac{A_0+1}{\sqrt k}e\right)^k.
    \]
\end{lemma}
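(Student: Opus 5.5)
The plan is to prove the second (determinant-only) bound first and then combine it with permanent bounds for $D[U^c]$.

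\emph{Determinant bound.} Since $D$ is doubly stochastic with $\norm B_\mx\le A_0/N$, every entry of $D$ satisfies $0\le D_{ij}\le (A_0+1)/N$. Hadamard's inequality on the rows of the $k\times k$ matrix $D[U]$ bounds each row's Euclidean norm by $\sqrt k\,(A_0+1)/N$. Hence
\[
    |\det D[U]|\le \left(\frac{(A_0+1)\sqrt k}{N}\right)^k .
\]
Summing over the $\binom Nk\le (eN/k)^k$ choices of $U$ gives
\[
    \sum_{|U|=k}|\det D[U]|\le \left(\frac{e(A_0+1)\sqrt k}{k}\right)^k=\left(\frac{A_0+1}{\sqrt k}e\right)^k,
\]
and multiplying by $k$ yields the second claim directly.

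\emph{Permanent ratio.} For the first claim it suffices to show
\[
    \frac{\per D[U^c]}{\per D}\le N^{C}C^k
\]
uniformly in $U$ with $|U|=k$. Multiplying this by the determinant sum, and absorbing $k\le N$ and the constants into $N^C$ and $C^k$, then gives $N^C(C/\sqrt k)^k$.

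For the denominator, Fact~\ref{fact:egorychev-falikman} gives $\per D\ge N!/N^N$. For the numerator, set $M=N-k$ and rescale each row of $Q=D[U^c]$ to sum to $1$. Row $i$ of $Q$ has sum $1-\sum_{j\in U}D_{ij}$, which may be small, but we only need an upper bound: since $\per$ is multilinear in rows, $\per Q\le \per Q'$, where $Q'$ is obtained by dividing each row by its sum. Each such sum is at most $1$, so this only increases entries, provided the row sums are positive; if some row sum is zero, then $\per Q=0$ and there is nothing to prove.

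The difficulty is that the entries of $Q'$ may no longer be bounded by $O(1/M)$ when a row has lost most of its mass to $U$. To avoid this, I would not normalize. Instead, I would apply Fact~\ref{fact:bregman} to the matrix $Q''=\frac{M}{A_0+1}\cdot\frac{1}{\lceil M/(A_0+1)\rceil}$-scaled version of $Q$, padded in each row to sum $1$. Concretely, increase entries of $Q$ (which only increases the permanent) so that every row sums to exactly $1$ while keeping all entries at most $1/r$ with $r=\lfloor M/(A_0+1)\rfloor$. This is possible because $rM\cdot\frac1r\ge 1$: a row of length $M$ with entries at most $1/r$ can reach total $M/r\ge 1$.

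\emph{Combining.} Fact~\ref{fact:bregman} then gives
\[
    \per D[U^c]\le \left(\frac{(r!)^{1/r}}{r}\right)^{M}=\left(e^{-1}(2\pi r)^{1/(2r)}(1+o(1))\right)^M\le N^{C}e^{-M}.
\]
Dividing by $\per D\ge N!/N^N\ge e^{-N}/N^{C}$ gives a ratio of at most $N^{C}e^{k}$, which is the required bound with $C\ge e$.

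The main obstacle is this Bregman step. One must check that the loss factor $\left((2\pi r)^{1/(2r)}\right)^M$ is polynomial in $N$. It is, since $r\asymp M/A_0$ and $M/(2r)=O(A_0)$, so the factor is $O(N^{O(A_0)})$. One must also check that the exponent $e^{-M}$ rather than $(M/N)^M$-type terms appears; Stirling gives $e^{-M}$ up to polynomial factors. The remaining care is that the dependence on $A_0$ enters only through $C$.
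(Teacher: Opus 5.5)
Your proposal is correct and follows the paper's proof essentially step for step. It uses Hadamard's inequality with $\binom Nk\le(eN/k)^k$ for the determinants and Fact~\ref{fact:egorychev-falikman} for $\per D$. For $\per D[U^c]$ it pads the rows up to sum $1$ under an entry cap $1/r$, then applies Fact~\ref{fact:bregman} and Stirling. The only slip is that your $r=\lfloor M/(A_0+1)\rfloor$ vanishes when $1\le M<A_0+1$, so Fact~\ref{fact:bregman} cannot be applied there. Handle that range with the trivial bound $\per D[U^c]\le 1\le e^{A_0+1}e^{-M}$ (a nonnegative matrix with row sums at most $1$ has permanent at most $1$), or use the paper's choice $r=\min\{M,\lfloor N/(A_0+1)\rfloor\}$, which is always positive.
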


\begin{proof}
    We begin with the simple proof of the second statement. Since $\norm B_\mx \le A_0/N$, we have $\norm{D[U]}_\mx \le (A_0+1)/N$. Therefore, by Hadamard's inequality,
    \[
        \sum_{|U|=k}k|\det D[U]| \le \binom Nk k \left(\sqrt k\frac{(A_0+1)}{N}\right)^k \le k\left(\frac{A_0+1}{\sqrt k}e\right)^k,
    \]
    where we have used $\binom Nk \le (\frac{Ne}k)^k$. 

    We now prove the first statement. Fix some $U\subseteq [N]$ with $|U| = k$, and let $M = N - k$. The case where $M = 0$ is clear, so assume $M \ge 1$. We first claim that
    \[
        \per D[U^c] \le N^{C_1}e^{-M}
    \]
    for some $C_1 = C_1(A_0)$. Let $r = \min\{M, \lfloor\frac{N}{A_0+1}\rfloor\}$, where we may assume $N$ is sufficiently large so that the second term is positive. Since $\norm{D[U^c]}_\mx \le (A_0+1)/N$ and $D$ is doubly stochastic, all entries of $D[U^c]$ are at most $1/r$ and every row sum of $D[U^c]$ is at most $1$.

    Our choice of $r$ implies that we may increase some of the entries in each row of $D[U^c]$, while preserving the upper bound $1/r$, until each row sums to exactly one. Indeed, if some row currently has sum $s \le 1$, then its total remaining ``capacity'' is $M/r - s \ge 1 - s \ge 0$. Hence we may choose an $M\times M$ matrix $Q$ indexed by $U^c$ such that every row of $Q$ sums to $1$, and for every $i,j\in U^c$, we have $Q_{ij} \ge D[U^c]_{ij}$ and $Q_{ij} \le 1/r$. The monotonicity of the permanent for nonnegative matrices together with Fact \ref{fact:bregman} now imply
    \[
        \per D[U^c] \le \per Q \le \left(\frac{(r!)^{1/r}}{r}\right)^{M}.
    \]

    We estimate this expression in two cases. If $r = M$, then this is simply $M!/M^M$, which by Stirling's formula is at most $N^{C_1}e^{-M}$ for some absolute constant $C_1$. Otherwise, $r = \lfloor\frac{N}{A_0+1}\rfloor$ is of order $N$. Applying Stirling's formula again and using $M \le N$, we have
    \[
        M\log\left(\frac{(r!)^{1/r}}{r}\right) = -M + O(\log N).
    \]
    It follows once more that $\per D[U^c] \le N^{C_1}e^{-M}$ for some constant $C_1 = C_1(A_0)$.

    On the other hand, since $D$ is doubly stochastic, Fact \ref{fact:egorychev-falikman} gives $\per D \ge N!/N^N$. Thus, by Stirling's formula,
    \[
        \frac{\per D[U^c]}{\per D} \le N^{C_1}e^{-M}\frac{N^N}{N!} \le N^{C_2}e^k
    \]
    for some $C_2 = C_2(A_0)$. Combining this estimate with the lemma's second statement proved above, we have
    \begin{align*}
        \frac{1}{\per D}\sum_{|U|=k}k|\det D[U]|\per D[U^c] &\le N^{C_2}e^k\sum_{|U|=k}k|\det D[U]| \\&\le N^{C_2}k\left(\frac{A_0+1}{\sqrt k}e^2\right)^k.
    \end{align*}
    Since $k \le N$, we may absorb the additional factor of $k$ into the polynomial factor in $N$. Enlarging the constant once more, there exists a constant $C = C(A_0)$ such that
    \[
        \frac{1}{\per D}\sum_{|U|=k}k|\det D[U]|\per D[U^c] \le N^C\left(\frac{C}{\sqrt k}\right)^k,
    \]
    as desired.
\end{proof}

We will also need the following simple lemma.

\begin{lemma}\label{lem:det(I-B)}
    If $D = J_N + B$ is an $N\times N$ doubly stochastic matrix, then
    \[
        \sum_{\emptyset\neq U\subseteq[N]}|U|(-1)^{|U|-1}\det D[U] = \det(I-B).
    \]
\end{lemma}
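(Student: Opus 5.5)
The plan is to recognize the left-hand side as a derivative of the characteristic-type polynomial $\det(I + \lambda D)$, or equivalently of $\det(\lambda I - D)$, and then use the rank-one structure $D = J_N + B$ together with $B\one_N = B^T\one_N = 0$. The standard expansion of the characteristic polynomial into principal minors gives
\[
    p(\lambda) := \det(\lambda I - D) = \sum_{U\subseteq[N]} (-1)^{|U|}\lambda^{N-|U|}\det D[U].
\]
Weighting each term by $|U|$ is the same as applying the operator $N - \lambda\frac{d}{dx}$-type combination. Concretely,
\[
    \sum_{U}|U|(-1)^{|U|-1}\det D[U] = -\Big(N p(\lambda) - \lambda p'(\lambda)\Big)\Big|_{\lambda=1} = p'(1) - N p(1).
\]

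Next I will evaluate $p(1)$ and $p'(1)$. Since $D$ is doubly stochastic, $D\one_N = \one_N$, so $1$ is an eigenvalue of $D$ and $p(1) = 0$. It then remains to show $p'(1) = \det(I - B)$.

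For this I will use an orthogonal change of basis. Take an orthonormal basis with first vector $\one_N/\sqrt N$. Because $B\one_N = B^T\one_N = 0$, in this basis $B$ is block diagonal, $0 \oplus B'$, where $B'$ is the restriction of $B$ to $\one_N^\perp$. Likewise $J_N$ becomes $1 \oplus 0$. Hence $D$ is similar to $1\oplus B'$, and
\[
    p(\lambda) = (\lambda - 1)\det(\lambda I_{N-1} - B').
\]
Differentiating at $\lambda = 1$ gives $p'(1) = \det(I_{N-1} - B')$. This equals $\det(I_N - B)$, since $I - B$ is similar to $1\oplus(I - B')$.

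Combining the two evaluations, the sum equals $p'(1) - Np(1) = \det(I - B)$, as claimed. None of this is a real obstacle. The only care needed is with signs in the principal-minor expansion of the characteristic polynomial and in the $|U|$-weighting identity, and I would check both on the $1\times 1$ case.
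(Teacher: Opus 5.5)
Your proof is correct and is essentially the paper's argument. The paper differentiates $f(t)=\det(I-tD)$ at $t=1$, so the $|U|$-weighting appears directly with no $Np(1)$ term to eliminate, and then uses the same splitting $\R^N=\mathrm{span}\{\one\}\oplus\one^\perp$ to get $\det(I-tD)=(1-t)\det(I-tB)$. Your characteristic-polynomial version is equivalent, since $p(\lambda)=\lambda^N f(1/\lambda)$ gives $-f'(1)=p'(1)-Np(1)$.
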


\begin{proof}
    Let $f(t) = \det(I - tD)$. By principal minor expansion, this determinant is
    \[
        f(t) = \sum_{U\subseteq[N]}(-t)^{|U|}\det D[U].
    \]
    Note that after differentiating and evaluating at $t = 1$, we have
    \[
        -f'(1) = \sum_{\emptyset \neq U\subseteq[N]}|U|(-1)^{|U|-1}\det D[U],
    \]
    so it suffices to show $f'(1)=-\det(I-B)$. Since $D$ is doubly stochastic, we have $B\one = B^T\one = 0$, and hence $B$ vanishes on $\mathrm{span}\{\one\}$, and $\one^\perp$ is invariant under $B$. On the other hand, $J_N$ acts as the identity on $\mathrm{span}\{\one\}$ and vanishes on $\one^\perp$. It follows that $D$ acts as the identity on $\mathrm{span}\{\one\}$, while on $\one^\perp$ it agrees with $B$. Therefore, with respect to the decomposition $\mathbb R^N = \mathrm{span}\{\one\}\oplus\one^\perp$, the matrices $D$ and $B$ have blocks $D = 1 \oplus B|_{\one^\perp}$ and $B = 0\oplus B|_{\one^\perp}$. It follows that $\det(I-tD) = (1-t)\det(I-tB)$. Differentiating via the product rule and evaluating at $t = 1$ now gives $f'(1) = -\det(I-B)$, as desired.
\end{proof}

We can now prove Proposition \ref{prop:cyc bound}.

\begin{proof}[Proof of Proposition \ref{prop:cyc bound}]
    Since $D$ is doubly stochastic and $\norm D_\mx\le A_0/N$, we have $\norm B_\mx \le A_0/N$, so Proposition \ref{prop:permanent bound} and the preceding lemmas apply. By Lemma \ref{lem:inclusion-exclusion},
    \[
        \frac{N\cyc D}{\per D}=\sum_{\emptyset\neq U\subseteq[N]}|U|(-1)^{|U|-1}\det D[U]\frac{\per D[U^c]}{\per D}.
    \]
    We will split this sum into contributions from the regimes $|U| \le \log^2 N$ and $|U| > \log^2 N$. When $|U| \le \log^2 N$, Lemma \ref{lem:small U per} gives
    \[
        \frac{\per D[U^c]}{\per D} = 1+o(1),
    \]
    which we would like to substitute into the preceding expression. In order to control the sum of the $o(1)$ errors accumulated by this replacement, note that Lemma \ref{lem:big U per} implies
    \[
        \sum_{\emptyset\neq U\subseteq[N]}|U||\det D[U]|\le\sum_{k=1}^\infty k\left(\frac{A_0+1}{\sqrt k}e\right)^k,
    \]
    which converges. It follows that the contribution from all $U$ with $|U|\le\log^2 N$ is
    \[
        \sum_{\substack{\emptyset\neq U\subseteq[N]\\|U|\le\log^2 N}}|U|(-1)^{|U|-1}\det D[U] + o(1).
    \]
    Now consider the $U$ with $|U| > \log^2 N$. Lemma \ref{lem:big U per} gives, for some constant $C = C(A_0)$,
    \[
        \frac{1}{\per D}\sum_{\substack{\emptyset\neq U\subseteq[N]\\|U|>\log^2 N}} |U||\det D[U]|\per D[U^c] \le \sum_{k > \log^2 N}N^C\left(\frac{C}{\sqrt k}\right)^k = o(1),
    \]
    so the contribution from these $U$ is negligible. Moreover, we also have
    \[
        \sum_{\substack{\emptyset\neq U\subseteq[N]\\|U|>\log^2 N}}|U||\det D[U]| \le \sum_{k=\lceil \log^2 N\rceil}^\infty k\left(\frac{(A_0+1)}{\sqrt k}e\right)^k = o(1).
    \]
    Consequently, by Lemma \ref{lem:det(I-B)},
    \[
        \frac{N\cyc D}{\per D}=\sum_{\emptyset\neq U\subseteq[N]}|U|(-1)^{|U|-1}\det D[U] + o(1) = \det(I-B)+o(1).
    \]
    We briefly remark that the Taylor series for $\log\det(I-B)$, together with $\norm B_\op\le\rho<1$, $\norm B_F = O(1)$, and $\norm D_\mx\le A_0/N$, imply $\det(I - B) = \Omega(1)$. So, the $o(1)$ error above may be taken multiplicatively, and hence
    \[
        \frac{N\cyc D}{\per D} = (1+o(1))\det(I-B).
    \]
    Finally, Proposition \ref{prop:permanent bound} applied to $D$ gives
    \[
        \frac{N^N\per D}{N!} = (1+o(1))\det(I-B^TB)^{-1/2}.
    \]
    Combining this with the previous equation, we have
    \[
        \frac{N^N\cyc D}{(N-1)!} = \frac{N^N\per D}{N!}\cdot\frac{N\cyc D}{\per D} = (1+o(1))\frac{\det(I-B)}{\sqrt{\det(I-B^TB)}},
    \]
    which concludes the proof.
\end{proof}

\section{Modified adjacency matrices}\label{sec:modifications}

We begin with a definition:

\begin{definition}\label{def:sparse modifications}
    \textit{Let $Q_0$ be a nonnegative $N\times N$ matrix and let $A_0 < \infty$. We say that $Q$ is an $A_0$-sparse modification of $Q_0$ if there exist sets $R,C\subseteq[N]$ with $|R| + |C| = O(\log^4N)$, probability vectors $p^{(i)}\in \mathbb R_{\ge0}^N$ for $i\in R$, and probability vectors $q^{(j)}\in\mathbb R_{\ge0}^N$ for $j\in C$, all having coordinates at most $A_0/N$, such that for every $1 \le i,j \le N$,
    \[
        Q_{ij} = 
        \begin{cases}
            (Q_0)_{ij} & \text{if }i\notin R,~j\notin C,\\
            p^{(i)}_j & \text{if }i\in R,~j\notin C,\\
            q^{(j)}_i & \text{if }i\notin R,~j\in C,\\
            0 & \text{if }i\in R,~j\in C.
        \end{cases}
    \]}
\end{definition}

In other words, an $A_0$-sparse modification of $Q_0$ is a matrix obtained from $Q_0$ by replacing $O(\log^4N)$ rows and columns by probability vectors with entries at most $A_0/N$, and at the intersections of these rows and columns we simply put zero. 

The aim of this section is to prove the following proposition regarding $A_0$-sparse modifications.

\begin{proposition}\label{prop:modifications}
    Let $A_0 < \infty$ and $\rho < 1$. Let $T$ be a tournament on $N$ vertices with normalized Seidel matrix $K$, and suppose $\norm{K\one_N}_\infty\cdot\norm{K\one_N}_2^2=o(1)$ and $\norm K_\op\le\rho$. If $Q$ is an $A_0$-sparse modification of $J_N - I/N + K$ and $\diag Q = 0$, then
    \[
        \frac{N^N\cyc Q}{(N-1)!} \le (1+o(1))\exp\Big(1 + \sum_{1\le i,j\le N} Q_{ij} - N\Big)Z(K).
    \]
\end{proposition}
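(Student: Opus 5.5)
The plan is to reduce to Proposition~\ref{prop:cyc bound} by Sinkhorn scaling and then compare the resulting determinant ratio with $Z(K)=\det(I-K)/\det(I-iK)$. Write $Q_0 = J_N - I/N + K$, which is $2A/N$ for the adjacency matrix $A$ of $T$. First I check that $Q$ satisfies the hypotheses of Proposition~\ref{prop:sinkhorn} with constant $\max(A_0,3)$.
\begin{itemize}[leftmargin=*]
\item \emph{Entry bound.} Every entry of $Q$ is $O(1/N)$.
\item \emph{Row sums.} For $i\notin R$, the row-sum defect is $u_i = (K\one_N)_i + O(|C|/N)$, and the rows in $R$ have defect $-O(|C|A_0/N)$. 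The same holds for the columns, with $v$ and $K^T\one_N = -K\one_N$. Since $\norm{K\one_N}_\infty\norm{K\one_N}_2^2=o(1)$ (and entries of $K\one_N$ are bounded by $1$), I will need the $\ell_\infty$ and $\ell_2/\sqrt N$ smallness of $K\one_N$. I expect to extract this from the hypothesis together with $\norm{K}_\op\le\rho$; if it does not follow directly I would record it as part of the standing assumptions coming from Proposition~\ref{prop:polylog set}.
\item \emph{Spectral gap.} The matrix $Q - J_N$ equals $K - I/N$ plus a perturbation supported on $O(\log^4 N)$ rows and columns. Each such row or column has $\ell_2$ norm $O(N^{-1/2})$, so its Frobenius norm is $O(\log^2N/\sqrt N)=o(1)$. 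By Eckart--Young, as in Lemma~\ref{lem:small U per}, this gives $\sigma_2(P)\le\rho+o(1)$.
\end{itemize}
Proposition~\ref{prop:sinkhorn} then gives $D=\diag(e^x)Q\diag(e^y)$ doubly stochastic with $\norm x_\infty+\norm y_\infty=o(1)$ and $\norm x_2+\norm y_2=O(\norm u_2+\norm v_2)$. Since every cyclic permutation uses each row and column once, $\cyc Q = e^{-s}\cyc D$ with $s=\sum_i x_i+\sum_j y_j$. One checks as in Lemma~\ref{lem:small U per} that $B:=D-J_N$ satisfies $\norm{B}_\mx = O(1/N)$ and $\norm B_\op\le(1+\rho)/2$. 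Proposition~\ref{prop:cyc bound} then yields
\[
\frac{N^N\cyc Q}{(N-1)!}=(1+o(1))\,e^{-s}\,\frac{\det(I-B)}{\sqrt{\det(I-B^TB)}}.
\]

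Next I handle the scaling factor. Using $e^t\ge1+t$,
\[
N=\sum_{i,j}Q_{ij}e^{x_i+y_j}\ge \sum_{i,j}Q_{ij}+s+\langle u,x\rangle+\langle v,y\rangle,
\]
so $e^{-s}\le \exp(\sum Q_{ij}-N)\exp(\langle u,x\rangle+\langle v,y\rangle)$.

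The determinant ratio is computed first for the model matrix $B_0=K-I/N$.
\begin{itemize}[leftmargin=*]
\item \emph{Numerator.} We have $\det(I-B_0)=\det((1+\tfrac1N)I-K)=\det(I-K)\exp\big(\tfrac1N\tr(I-K)^{-1}+O(1/N)\big)$. Since $K$ is skew with eigenvalues $\pm i\lambda$ and $\sum\lambda^2=\norm K_F^2\le1$, we get $\tfrac1N\tr(I-K)^{-1}=\tfrac1N\sum(1+\lambda^2)^{-1}=1+O(1/N)$. This produces the factor $e^{1}$.
\item \emph{Denominator.} Here $B_0^TB_0=-K^2+O_\op(1/N)$ with $O(1)$ trace-norm error effect $o(1)$. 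Also $\det(I+K^2)=\prod(1-\lambda^2)^2$ (pairing $\pm i\lambda$), and $\det(I-iK)=\prod(1-\lambda^2)$ is positive. Hence the square root is $(1+o(1))\det(I-iK)$.
\end{itemize}
So the model case gives exactly $e\,Z(K)$.

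It remains to pass from $B_0$ to $B$, using Jacobi's formula along a linear interpolation as in Lemma~\ref{lem:small U per}. The difference $B-B_0$ has two parts.
\begin{itemize}[leftmargin=*]
\item \emph{Sparse part.} The rows and columns in $R\cup C$ form a perturbation of Frobenius norm $o(1)$. Since $\norm B_F,\norm{B_0}_F=O(1)$ and the resolvents are bounded, this changes both $\log\det(I-B)$ and $\log\det(I-B^TB)$ by $o(1)$.
\item \emph{Scaling part.} The Sinkhorn correction is essentially $\diag(x)Q_0+Q_0\diag(y)$ plus quadratic terms. Its Frobenius norm is $O((\norm x_2+\norm y_2)/\sqrt N)$ only in the $J_N$-direction. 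In the $K$-direction it is $O(\norm x_\infty+\norm y_\infty)$ times $\norm K_F$, which is again $o(1)$ in effect, while the $J_N$-component is killed because $B\one=0$.
\end{itemize}

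The main obstacle I anticipate is the cross term $\langle u,x\rangle+\langle v,y\rangle$. Here $u\approx K\one_N$ and $v\approx -K\one_N$, so this term is of order $\norm{K\one_N}_2^2$, which the hypothesis does not make small. My plan is not to bound it in isolation. Instead I would expand $x,y$ to first order via the linear map $\mathcal L$ of Lemma~\ref{lem:sinkhorn inverse}; for $Q_0$ this is $x\approx -u/2+\ldots$, $y\approx -v/2$. This shows $\langle u,x\rangle+\langle v,y\rangle = -\tfrac12(\norm u_2^2+\norm v_2^2)+\text{(lower order)}\le o(1)$, with the sign coming from positive-definiteness of $\mathcal M$ in Lemma~\ref{lem:sinkhorn spectral 2}. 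At the same time I would check that the first-order change of $\log\det(I-B)-\tfrac12\log\det(I-B^TB)$ caused by the scaling is $O(\norm x_\infty\norm x_2^2)$-type cubic. That error is exactly what $\norm{K\one_N}_\infty\norm{K\one_N}_2^2=o(1)$ controls, while the quadratic terms either cancel against the trace identity $B\one=B^T\one=0$ or carry a favorable sign. Since only an upper bound is claimed, a nonpositive quadratic contribution suffices, and combining everything gives the stated inequality.
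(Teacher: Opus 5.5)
Your outline matches the paper's: Sinkhorn-scale $Q$ to $D$, apply Proposition~\ref{prop:cyc bound}, pay for the scaling, and compare determinants by Jacobi's formula. However, your handling of the step you call the main obstacle is flawed, and that obstacle is avoidable.

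\textbf{The cross term.} It arises only because you applied $e^t\ge1+t$ with weights $Q_{ij}$, whose marginals are $\one_N+u$ and $\one_N+v$. Weight by $D$ instead, whose marginals are exactly $\one_N$. Then $\sum_{i,j}Q_{ij}=\sum_{i,j}D_{ij}e^{-(x_i+y_j)}\ge\sum_{i,j}D_{ij}(1-x_i-y_j)=N-s$, so $e^{-s}\le\exp(\sum_{i,j}Q_{ij}-N)$ with no remainder. The paper does exactly this via Lemma~\ref{lem:capacity sinkhorn}: $e^{-s}=\capacity(Q)\le\prod_i(Q\one_N)_i\le\exp(\sum_iu_i)$. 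Equivalently, your cross term is identically nonpositive, since $\langle u,x\rangle+\langle v,y\rangle=\sum_{i,j}(Q_{ij}-D_{ij})(x_i+y_j)=\sum_{i,j}Q_{ij}(1-e^{x_i+y_j})(x_i+y_j)\le0$.

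Your perturbative substitute would need repair in any case. The linearization $x\approx-u/2$, $y\approx-v/2$ is valid only on $\mathrm{span}\{\one_N\}$. But $K\one_N\perp\one_N$, and on $\one_N^\perp$ the matrix $Q$ acts like $K$, not $J_N$. The correct statement is $\langle(u,v),\mathcal L(u,v)\rangle=\sum_{i,j}Q_{ij}(a_i+b_j)^2\ge0$ for $(a,b)=\mathcal L(u,v)$, plus a remainder of order $\norm{(u,v)}_*(\norm u_2+\norm v_2)^2$ that you would still have to bound. None of this is needed, and neither is the cubic analysis of the determinant ratio in your last paragraph. The paper's proof uses the hypothesis only through $\norm{K\one_N}_\infty^3\le\norm{K\one_N}_\infty\norm{K\one_N}_2^2=o(1)$. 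That inequality also settles the point you hedged on, and $\norm w_2/\sqrt N\le\norm w_\infty$ then gives the $\ell_2$ condition of Proposition~\ref{prop:sinkhorn}.

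\textbf{The numerator Jacobi step.} A Frobenius-small perturbation with a bounded resolvent does not by itself move $\log\det(I-B_t)$ by $o(1)$. For $E=B-B_0$ you only get $|\tr((I-B_t)^{-1}E)|\le\norm{(I-B_t)^{-1}}_F\norm E_F=O(\sqrt N\,\norm E_F)$. You must split $(I-B_t)^{-1}=I+B_t(I-B_t)^{-1}$. The second piece contributes $O(\norm{B_t}_F\norm E_F)=o(1)$, but the first contributes $\tr E$, which must be computed. Here $\tr E=\tr B-\tr B_0=-1-(-1)=0$, because $\diag Q=0$ forces $\diag D=0$; your proposal never invokes this hypothesis.

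The paper instead compares $B$ with $K$ directly. The same splitting gives derivative $-\tr(B-K)+o(1)=1+o(1)$, which is where the factor $e$ comes from. Your detour through $\det((1+\frac1N)I-K)$ is also fine once the trace is tracked.

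Finally, the scaling part needs no directional decomposition. Since $\norm Q_F=O(1)$ and $\norm x_\infty+\norm y_\infty=o(1)$, you have $\norm{D-Q}_F\le\norm Q_F\max_{i,j}|e^{x_i+y_j}-1|=o(1)$.
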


To help facilitate this proof, we will introduce Gurvits' notion of the \textit{capacity} of a matrix. For a nonnegative $N\times N$ matrix $Q$, define its capacity by
\[
    \capacity(Q) = \inf_{z}\frac{\prod_{i=1}^N (Qz)_i}{\prod_{i=1}^N z_i},
\]
where the infimum is taken over all $z = (z_1,\dots, z_N)^T\in\mathbb R^N_{>0}$. Capacity was introduced by Gurvits \cite{Gurvits2008} to give a common generalization of the Van der Waerden and Schrijver-Valiant conjectures (whose proof does not rely on the Alexandrov-Fenchel inequality). The quantity has since had a significant afterlife, and is known to behave nicely under several matrix and operator transformations (see, e.g., \cite{Wigderson2020}).

In our application, capacity will simply serve as a convenient way of packaging the cost of applying Sinkhorn scaling on various matrices which are close to being doubly stochastic. To this end, we need the following lemma, which is a specialization of \cite[Lemma 4.2.11]{Kwok18} to the context of Sinkhorn scaling.
\begin{lemma}[{\cite[Lemma 4.2.11]{Kwok18}}]\label{lem:capacity sinkhorn}
    If $D = \diag(e^x)Q\diag(e^y)$ is an $N\times N$ doubly stochastic matrix where $x,y\in\mathbb R^N$, then
    \[
        \capacity(Q) = \exp\big(-\sum_{i=1}^N(x_i+y_i)\big).
    \]
\end{lemma}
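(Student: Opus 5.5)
The proof has two steps. First we show that every doubly stochastic matrix has capacity exactly $1$. Then we show that capacity transforms multiplicatively under diagonal scaling. Combining the two gives the formula.

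\emph{Step 1: $\capacity(D)=1$ for doubly stochastic $D$.} Taking $z=\one_N$ gives $\prod_i (D\one_N)_i/\prod_i 1 = 1$, so $\capacity(D)\le 1$. For the reverse inequality, fix $z\in\mathbb R^N_{>0}$. Each row of $D$ is a probability vector, so concavity of $\log$ (weighted AM--GM) gives
\[
    \log (Dz)_i=\log\Big(\sum_j D_{ij}z_j\Big)\ge \sum_j D_{ij}\log z_j .
\]
Summing over $i$ and using that every column of $D$ sums to one yields
\[
    \sum_i\log(Dz)_i\ge\sum_j\log z_j .
\]
Hence $\prod_i(Dz)_i\ge\prod_i z_i$ for every positive $z$, so $\capacity(D)\ge 1$ and therefore $\capacity(D)=1$. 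Rows of $D$ with zero entries cause no trouble here, since every $z_j>0$.

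\emph{Step 2: behaviour under diagonal scaling.} Let $a,b\in\mathbb R^N_{>0}$ and $D=\diag(a)Q\diag(b)$. For $z\in\mathbb R^N_{>0}$ set $w=\diag(b)z$; the map $z\mapsto w$ is a bijection of the positive orthant onto itself. Then
\[
    \frac{\prod_i (Dz)_i}{\prod_i z_i}=\frac{\prod_i a_i\,(Qw)_i}{\prod_i w_i/b_i}=\Big(\prod_i a_i\Big)\Big(\prod_i b_i\Big)\frac{\prod_i (Qw)_i}{\prod_i w_i}.
\]
Taking the infimum over $z$, equivalently over $w$, gives
\[
    \capacity(D)=\Big(\prod_i a_i\prod_i b_i\Big)\capacity(Q).
\]

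\emph{Conclusion.} Apply Step 2 with $a=e^x$ and $b=e^y$, so that $\prod_i a_i\prod_i b_i=\exp\big(\sum_i(x_i+y_i)\big)$. Step 1 gives $\capacity(D)=1$, hence
\[
    1=\exp\Big(\sum_{i=1}^N(x_i+y_i)\Big)\capacity(Q),
\]
which rearranges to the claimed identity $\capacity(Q)=\exp\big(-\sum_{i=1}^N(x_i+y_i)\big)$.

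No step is a real obstacle. The only point needing care is the lower bound in Step 1, which uses the row-stochastic property for Jensen's inequality and the column-stochastic property when summing over $i$.
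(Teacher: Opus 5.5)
Your proof is correct and takes the same route as the paper. Weighted AM--GM gives $\capacity(D)=1$, and the diagonal-scaling identity $\capacity(\diag(a)Q\diag(b))=\prod_i a_i\prod_i b_i\,\capacity(Q)$ then yields the formula. The one difference is that you verify the scaling identity directly via the substitution $w=\diag(b)z$, whereas the paper imports it from Kwok et al.\ after converting their normalization of capacity; this makes your argument self-contained.
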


\begin{proof}
    The version of capacity in \cite{Kwok18} is defined to be $N$ times the $N$th root of ours for $N\times N$ matrices. Thus, raising both sides of their lemma to the $N$th power and canceling the factor $N^N$ gives the identity $\capacity (LXR) = \det (L)\det(R)\capacity(X)$. In our setting $D$ is doubly stochastic, which implies $\capacity(D) = 1$. Indeed, by the weighted AM-GM inequality,
    \[
        \prod_{i=1}^N(Dz)_i = \prod_{i=1}^N\sum_{j=1}^ND_{ij}z_j\ge\prod_{i=1}^N\prod_{j=1}^Nz_j^{D_{ij}}=\prod_{j=1}^N z_j,
    \]
    and hence $\capacity(D)\ge 1$. On the other hand, taking $z = \one_N$ shows $\capacity(D) \le 1$. The lemma now follows from $1 = \det(\diag(e^x))\det(\diag(e^y))\capacity(Q)$.
\end{proof}

We will also need the following lemma, which records an elementary determinant identity for real skew-symmetric matrices. (This is where the quantity $Z(K)$ emerges from the determinant expressions arising in applications of Proposition \ref{prop:permanent bound}.) Recall that the nonzero eigenvalues of a real skew-symmetric matrix occur in conjugate pairs with real part zero.

\begin{lemma}\label{lem:skew-sym det}
    Let $K$ be a real skew-symmetric matrix with $\norm K_\op<1$, and let $\pm i\lambda_1,\dots,\pm i\lambda_r$ be its nonzero eigenvalues, counted with multiplicity. Then
    \[
        \sqrt{\det(I-K^TK)}=\det(I-iK)
    \]
    and
    \[
        Z(K)=\prod_{j=1}^r\frac{1+\lambda_j^2}{1-\lambda_j^2}\ge1.
    \]
\end{lemma}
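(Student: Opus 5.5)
The plan is to reduce everything to the real canonical form of a skew-symmetric matrix. Since $K$ is real skew-symmetric, there is a real orthogonal matrix $O$ with $O^TKO$ block diagonal, consisting of $2\times2$ blocks $\left(\begin{smallmatrix}0&\lambda_j\\-\lambda_j&0\end{smallmatrix}\right)$ for $j=1,\dots,r$ and zeros elsewhere. Here $\lambda_j>0$, and $\pm i\lambda_j$ are the nonzero eigenvalues. Determinants of $I-K^TK$, $I-iK$ and $I-K$ are invariant under this orthogonal conjugation, so each factors as a product over the blocks.

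For the first identity, $K^TK=-K^2$ has eigenvalue $\lambda_j^2$ with multiplicity two for each $j$, and eigenvalue $0$ otherwise. Hence
\[
\det(I-K^TK)=\prod_{j=1}^r(1-\lambda_j^2)^2.
\]
The matrix $iK$ is Hermitian with eigenvalues $\mp\lambda_j$, so
\[
\det(I-iK)=\prod_{j=1}^r(1-\lambda_j)(1+\lambda_j)=\prod_{j=1}^r(1-\lambda_j^2).
\]
The hypothesis $\norm K_\op<1$ gives $\lambda_j<1$, so this product is positive. It is therefore exactly the nonnegative square root of $\det(I-K^TK)$, which is the first identity. The only subtlety is the branch of the square root, and positivity settles it.

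For the formula for $Z(K)=\det(I-K)/\det(I-iK)$, I compute $\det(I-K)$ blockwise. Each block contributes
\[
\det\begin{pmatrix}1&-\lambda_j\\ \lambda_j&1\end{pmatrix}=1+\lambda_j^2,
\]
or equivalently $(1-i\lambda_j)(1+i\lambda_j)$ from the eigenvalues. So $\det(I-K)=\prod_j(1+\lambda_j^2)$, and dividing by the expression for $\det(I-iK)$ gives
\[
Z(K)=\prod_{j=1}^r\frac{1+\lambda_j^2}{1-\lambda_j^2}.
\]
Each factor is at least $1$, since $0\le\lambda_j<1$, so $Z(K)\ge1$.

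There is no real obstacle here. The only step needing care is the sign and branch of the square root, and $\norm K_\op<1$ handles it by making every factor $1-\lambda_j^2$ positive.
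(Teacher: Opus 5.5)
Your proposal is correct and follows essentially the same route as the paper: both compute $\det(I-iK)=\prod_j(1-\lambda_j^2)>0$ and $\det(I-K)=\prod_j(1+\lambda_j^2)$ from the spectrum, using $\lambda_j<1$ to fix the branch of the square root. The only difference is that the paper obtains $\det(I-K^TK)=\det(I-iK)^2$ from the factorization $I+K^2=(I-iK)(I+iK)$ together with $I+iK=(I-iK)^T$, whereas you read $\det(I-K^TK)=\prod_j(1-\lambda_j^2)^2$ directly off the canonical form.
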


\begin{proof}
    Since $\norm K_\op < 1$, we have $0 < \lambda_j<1$ for every $j\in[r]$. Since $K^TK=-K^2$,
    \[
        \det(I-K^TK)=\det(I+K^2)=\det(I-iK)\det(I+iK) = \det(I-iK)^2,
    \]
    where the last equality follows from $I+iK=(I-iK)^T$. Moreover,
    \[
        \det(I-iK)=\prod_{j=1}^r(1-\lambda_j^2)>0,
    \]
    so $\sqrt{\det(I-K^TK)}=\det(I-iK)$, which proves the first statement. For the second statement, we similarly have $\det(I-K)=\prod_{j=1}^r(1+\lambda_j^2)$, and therefore
    \[
        Z(K)=\frac{\det(I-K)}{\det(I-iK)}=\prod_{j=1}^r\frac{1+\lambda_j^2}{1-\lambda_j^2}\ge1,
    \]
    as desired.
\end{proof}

\begin{proof}[Proof of Proposition \ref{prop:modifications}]
    Let $Q_0 = J_N - I/N+K$ and define $R,C\subseteq[N]$ so that $Q$ is an $A_0$-sparse modification of $Q_0$ whose entries may differ from $Q_0$ only in rows $R$ and columns $C$ (thus $|R|+|C|=O(\log^4N)$). 
    
    We first show that $Q$ satisfies the properties of Proposition \ref{prop:sinkhorn}. From the definition of an $A_0$-sparse modification and $\norm{Q_0}_\mx \le 2/N$, we have that $\norm Q_\mx\le(A_0+2)/N$. Recall that $u,v\in\mathbb R^N$ are the vectors $u = Q\one_N-\one_N$ and $v = Q^T\one_N-\one_N$. For any $1 \le i \le N$, if $i\notin R$, then the only entries in row $i$ which may differ between $Q_0$ and $Q$ occur in the columns indexed by $C$, so
    \[
        u_i = (K\one_N)_i - \frac1N + O\left(\frac{|C|}N\right).
    \]
    On the other hand, if $i\in R$, then the entries outside of the columns indexed by $C$ agree with the probability vector $p^{(i)}$ (whose coordinates sum to one), and hence $u_i = O(|C|/N)$. A symmetric argument gives $v_j = -(K\one_N)_j - 1/N + O(|R|/N)$ if $j\notin C$ while $v_j = O(|R|/N)$ if $j\in C$ for any $1\le j\le N$. Since $|R|+|C|=O(\log^4N)$, it follows that
    \[
        \norm u_\infty+\norm v_\infty\le 2\norm{K\one_N}_\infty + O\left(\frac{1+|R|+|C|}{N}\right) = 2\norm{K\one_N}_\infty + o(1) = o(1),
    \]
    where we have used $\norm{K\one_N}^3_\infty \le \norm{K\one_N}_\infty\cdot\norm{K\one_N}_2^2=o(1)$. Since $\norm w_2\le\sqrt N\norm w_\infty$ for every $w\in\mathbb R^N$, this also gives $(\norm u_2+\norm v_2)/\sqrt N=o(1)$, so it only remains to check the hypothesis about $\sigma_2(P)$. We proceed in a similar fashion as in Lemma \ref{lem:small U per}. Since $\norm u_\infty=o(1)$, we have $\diag(\one_N+u)^{-1/2}=I+o_\op(1)$, and similarly $\diag(\one_N+v)^{-1/2}=I+o_\op(1)$. For convenience, let us denote these by $L'$ and $R'$, respectively. Since $\norm Q_\mx \le (A_0+2)/N$, we have $\norm Q_1,\norm Q_\infty\le A_0+2$, so by Schur's test $\norm Q_\op\le\sqrt{\norm Q_1\norm Q_\infty} =O(1)$. It follows from $P = L'QR'$ and $\norm Q_\op=O(1)$ that
    \[
        \norm{P-Q}_\op\le\norm{L'-I}_\op\cdot\norm Q_\op\cdot \norm {R'}_\op+\norm Q_\op\cdot\norm{R'-I}_\op = o(1).
    \]
    Every entry of $Q$ and $Q_0$ is $O(1/N)$, so $\norm{Q-Q_0}_\mx =O(1/N)$. The two matrices $Q$ and $Q_0$ differ on at most $N(|R|+|C|)$ entries. Therefore,
    \[
        \norm{Q-Q_0}_\op\le\norm{Q-Q_0}_F\le \sqrt{N(|R|+|C|)O(N^{-2})} = O\left(\sqrt{\frac{|R|+|C|}{N}}\right)=o(1),
    \]
    where we have used that $\norm A_\op\le\norm A_F$ for every matrix $A$. The triangle inequality therefore gives $\norm{P-Q_0}_\op=o(1)$. As singular values are $1$-Lipschitz with respect to the spectral norm, this implies $\sigma_2(P)\le\sigma_2(Q_0)+o(1)$, so it remains to bound $\sigma_2(Q_0)$. Viewing $J_N$ as a rank one approximation of $Q_0$, by the Eckart-Young theorem
    \[
        \sigma_2(Q_0)\le\norm{Q_0-J_N}_\op = \norm{K-I/N}_\op\le\norm K_\op+\norm{I/N}_\op \le \rho+o(1),
    \]
    so $\sigma_2(P)\le\rho+o(1)$. Therefore, $Q$ satisfies the hypotheses of Proposition \ref{prop:sinkhorn} with constants $A_0+2$ and $\rho$. Applying the proposition to $Q$ gives $x,y\in\mathbb R^N$ such that
    \[
        D = \diag(e^x)Q\diag(e^y)
    \]
    is doubly stochastic, $\norm x_2+\norm y_2=o(\sqrt N)$, and $\norm x_\infty+\norm y_\infty=o(1)$.

    Next, we compare $D$ with $J_N+K$ in order to apply Proposition \ref{prop:cyc bound} to $D$. Since $\norm x_\infty+\norm y_\infty=o(1)$, we have $\max_{1\le i,j\le N}|e^{x_i+y_j}-1|=o(1)$. Therefore, 
    \[
        \norm{D-Q}_F\le\norm Q_F\max_{1\le i,j\le N}|e^{x_i+y_j}-1| \le N\norm Q_\mx\max_{1\le i,j\le N}|e^{x_i+y_j}-1|=o(1),
    \]
    where the final inequality comes from $\norm Q_\mx=O(1/N)$. Setting $B = D - J_N$, it follows that $\norm{B-K}_F=o(1)$, since
    \[
        \norm{B-K}_F=\norm{D-Q_0-I/N}_F\le\norm{D-Q}_F+\norm{Q-Q_0}_F+\norm{I/N}_F= o(1).
    \]
    Moreover, since $\rho < 1$, we have for all sufficiently large $N$ that
    \[
        \norm B_\op\le \norm{B-K}_\op+\norm K_\op\le \norm{B-K}_F+\norm K_\op\le \rho+o(1)<\frac{\rho+1}{2}.
    \]
    Finally, we have $\norm D_\mx \le \norm Q_\mx(\max_{1\le i,j\le N}e^{x_i+y_j}) \le (2A_0+4)/N$, so $D$ satisfies the hypotheses of Proposition \ref{prop:cyc bound} with constants $2A_0+4$ and $(\rho+1)/2$. Applying the proposition to $D$, we have
    \[
        \frac{N^N\cyc D}{(N-1)!}=(1+o(1))\frac{\det(I-B)}{\sqrt{\det(I-B^TB)}}.
    \]

    We now record the cost of the Sinkhorn scaling. Since every term contributing to $\cyc Q$ contains exactly one entry from each row and each column, we have $\cyc D =\exp\big(\sum_{i=1}^N(x_i+y_i)\big)\cyc Q$. By Lemma \ref{lem:capacity sinkhorn}, this gives $\cyc Q = \cyc (D)\capacity(Q)$. We can bound $\capacity(Q)$ by evaluating the infimum (in the capacity definition) at $z = \one_N$:
    \[
        \capacity(Q) \le \prod_{i=1}^N(1+u_i)\le\exp\left(\sum_{i=1}^Nu_i\right)=\exp\left(\sum_{1\le i,j\le N}Q_{ij}-N\right).
    \]

    It remains to show that
    \[
        \frac{\det(I-B)}{\sqrt{\det(I-B^TB)}}=(1+o(1))eZ(K).
    \]
    To do so, we compare the numerator with $e\det(I-K)$ and the denominator with $\sqrt{\det(I-K^TK)}$. This step will involve two applications of Jacobi's identity, in a similar fashion to that of Lemma \ref{lem:small U per}. Note that $\norm{B-K}_F=o(1)$ and $\norm K_F=O(1)$ imply that we also have $\norm B_F=O(1)$. Moreover, for all sufficiently large $N$, both $\norm B_\op$ and $\norm K_\op$ are at most $r = (\rho+1)/2<1$.

    We begin with the numerator. For $0\le t\le 1$, let $B_t=tB+(1-t)K$. Then $\norm {B_t}_\op\le r$, so every eigenvalue $\lambda$ of $B_t$ satisfies $|\lambda|<1$ and $I-B_t$ is invertible. Moreover, the real eigenvalues of $I-B_t$ are positive, while its complex eigenvalues occur in conjugate pairs whose products are positive, so $\det(I-B_t)>0$. By Jacobi's identity,
    \[
        \frac d{dt}\log\det(I-B_t)=-\tr\left((I-B_t)^{-1}(B-K)\right).
    \]
    Using $(I-B_t)^{-1}=I+B_t(I-B_t)^{-1}$, this breaks into
    \[
        \frac d{dt} \log\det(I-B_t)=-\tr(B-K)-\tr\left(B_t(I-B_t)^{-1}(B-K)\right).
    \]
    Since $B = D - J_N$ and $\diag Q = \diag D = 0$, we have $\tr(B)=-1$. As $K$ is a normalized Seidel matrix, $\tr(K)=0$, so the first term above is simply $1$. To bound the second term, we use the fact that for matrices $X$ and $Y$, $|\tr(XY)|\le\norm X_F\norm Y_F$, which follows from Cauchy-Schwarz. Applying this inequality to $\tr\left(B_t(I-B_t)^{-1}(B-K)\right)$,
    \[
        |\tr\left(B_t(I-B_t)^{-1}(B-K)\right)|\le\norm{B_t(I-B_t)^{-1}}_F\cdot\norm{B-K}_F.
    \]
    Let $w_i$ denote the $i$th row of $B_t$. By the definition of the spectral norm,
    \[
        \norm{B_t(I-B_t)^{-1}}_F^2=\sum_{i=1}^N\norm{w_i(I-B_t)^{-1}}_2^2 \le \norm{(I-B_t)^{-1}}^2_\op\cdot\norm{B_t}_F^2.
    \]
    Since $\norm{B_t}_\op\le r < 1$, we have by the Neumann series that
    \[
        \norm{(I-B_t)^{-1}}_\op = \left\|\sum_{i=0}^\infty B_t^i\right\|_\op\le\sum_{i=0}^\infty\norm{B_t}_\op^i \le \sum_{i=0}^\infty r^i=\frac{1}{1-r}. 
    \]
    Now we know that $\norm{B_t}_F\le \norm B_F+\norm K_F=O(1)$ and $\norm{(I-B_t)^{-1}}_\op\le(1-r)^{-1}$, so $\norm{B_t(I-B_t)^{-1}}_F=O(1)$. Combining this with $\norm{B-K}_F=o(1)$ implies $|\tr\left(B_t(I-B_t)^{-1}(B-K)\right)|=o(1)$, and hence $\frac d{dt}\log\det(I-B_t)=1+o(1)$. (This is uniform over all $0\le t\le 1$.) Integrating this expression from $t=0$ to $t=1$,
    \[
        \log\det(I-B)-\log\det(I-K)=\int_0^1(1+o(1))\mathrm dt = 1+o(1),
    \]
    so $\det(I-B)=(1+o(1))e\det(I-K)$.

    Next, we compare the denominator with $\sqrt{\det(I-K^TK)}$. Let $H = K^TK$ and $\widetilde H = B^TB$. Since $\norm K_\op,\norm B_\op\le r$, we have $\norm H_\op,\norm{\widetilde H}_\op\le r^2$. We may now apply the argument involving $H$ and $\widetilde H$ from Lemma \ref{lem:small U per} verbatim to see that $\det(I-B^TB)^{-1/2}=(1+o(1))\det(I-K^TK)^{-1/2}$.

    Combining the previous two comparisons with Lemma \ref{lem:skew-sym det}, we have
    \[
        \frac{\det(I-B)}{\sqrt{\det(I-B^TB)}} = (1+o(1))e\frac{\det(I-K)}{\sqrt{\det(I-K^TK)}} = (1+o(1))e\frac{\det(I-K)}{\det(I-iK)},
    \]
    which is $(1+o(1))eZ(K)$. Putting everything together,
    \[
        \frac{N^N\cyc Q}{(N-1)!} = \capacity(Q)\frac{N^N\cyc D}{(N-1)!} \le (1+o(1))\exp\Big(1+\sum_{1\le i,j\le N}Q_{ij}-N\Big)Z(K),
    \]
    which completes the proof of Proposition \ref{prop:modifications}.
\end{proof}

\section{Upper bound on $Z(K)$}\label{sec:Z bound}

Let $K$ be the normalized Seidel matrix of an arbitrary tournament. The aim of this section is to prove the following upper bound on the quantity $Z(K) = \det(I-K)/\det(I-iK)$. 

\begin{proposition}\label{prop:Z(K) bound}
    For $N$ sufficiently large, if $T$ is a tournament on $N$ vertices with normalized Seidel matrix $K$, then
    \[
        \frac{\det(I-K)}{\det(I-iK)}\le\frac{\sum_{j=0}^{\lfloor N/2\rfloor}\binom{N}{2j}N^{-2j}}{\sum_{j=0}^{\lfloor N/2\rfloor}(-1)^j\binom{N}{2j}N^{-2j}}\overset{N}{\longrightarrow}\frac{\cosh1}{\cos1}
    \]
\end{proposition}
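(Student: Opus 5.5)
The plan is to diagonalize the problem, reduce the inequality to a majorization statement about the spectrum of the Seidel matrix, and then evaluate the extremal (transitive) case exactly.

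\textbf{Spectral form and setup.} Let $C=NK$ be the Seidel matrix, with nonzero eigenvalues $\pm i\mu_1,\dots,\pm i\mu_r$, and set $\lambda_j=\mu_j/N$. By Lemma \ref{lem:Seidel norm bound}, $\norm K_\op\le \cot(\pi/2N)/N\to 2/\pi<1$, so Lemma \ref{lem:skew-sym det} applies and gives
\[
Z(K)=\prod_{j=1}^r\frac{1+\lambda_j^2}{1-\lambda_j^2}=\exp\Big(\sum_{j} g(\lambda_j^2)\Big),\qquad g(x)=\log\frac{1+x}{1-x}.
\]
The function $g$ is increasing and convex on $[0,1)$. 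The key observation is that the total $\sum_j\mu_j^2=\tfrac12\norm C_F^2=N(N-1)/2$ is the same for every tournament on $N$ vertices. So, writing the vector $x=(\lambda_1^2,\dots)$ padded with zeros to length $\lfloor N/2\rfloor$, the quantity $\sum_j g(x_j)$ is a Schur-convex function of a vector with fixed sum. By Karamata's inequality it therefore suffices to show that the vector $x^{\mathrm{tr}}$ of the transitive tournament majorizes $x$. Concretely, for every $k$, the sum of the $k$ largest $\mu_j^2$ should be maximized by the transitive tournament.

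\textbf{The majorization step.} These partial sums are Ky Fan sums. The $k$ largest $\mu_j^2$ are the top $2k$ eigenvalues of the PSD matrix $C^TC=-C^2$, each counted twice. So one needs
\[
\max_{P}\tr(P\,C^TC)\le \max_P\tr(P\,C_{\mathrm{tr}}^TC_{\mathrm{tr}})
\]
over rank-$2k$ projections $P$, where $C_{\mathrm{tr}}$ is the transitive Seidel matrix. This is exactly the majorization argument of Fang and Huang \cite{FangHuang2026} cited in the outline. I would invoke it directly, or reprove it if its statement there is phrased differently, e.g.\ in terms of $\tr C^{2m}$ or cycle densities. This is the main obstacle. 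If I had to prove it, I would first try to bound $\tr(PC^TC)=\norm{CU}_F^2$ for an orthonormal frame $U$ entrywise, using only $|C_{ij}|\le 1$ and skew-symmetry. I would then show that a suitable rearrangement (sorting vertices by score, or a switching/compression argument that moves toward transitivity) does not decrease it.

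\textbf{Evaluating the transitive tournament.} Expand the characteristic polynomial. For skew-symmetric $C$,
\[
\det(xI-C)=\sum_j c_{2j}x^{N-2j},\qquad c_{2j}=\sum_{|U|=2j}\det C[U]=\sum_{|U|=2j}\pf(C[U])^2 .
\]
This gives
\[
\det(I-K)=\sum_j c_{2j}N^{-2j},\qquad \det(I-iK)=\prod_j(1-\lambda_j^2)=\sum_j(-1)^jc_{2j}N^{-2j}.
\]
For the transitive tournament, every principal submatrix of even order $2j$ is again a transitive Seidel matrix, and its Pfaffian is $\pm1$; this can be checked inductively by expanding along the first row. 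Hence $c_{2j}=\binom N{2j}$, which yields exactly the stated ratio. Finally,
\[
\sum_j\binom N{2j}N^{-2j}=\tfrac12\big[(1+\tfrac1N)^N+(1-\tfrac1N)^N\big]\to\cosh 1,
\]
\[
\sum_j(-1)^j\binom N{2j}N^{-2j}=\mathrm{Re}\,(1+\tfrac iN)^N\to\cos 1,
\]
which establishes the stated limit.
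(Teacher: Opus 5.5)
\textbf{There is a genuine gap: the majorization you reduce to is false.} Take $N=6$ and the tournament with $i\to j$ for all $i<j$, except that the edge between $1$ and $4$ is reversed.

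\emph{Pfaffian.} Expand $\pf(C)$ along the first row. Each $C[\{2,\dots,6\}\setminus\{j\}]$ is a transitive Seidel matrix, so it has Pfaffian $1$. Hence $\pf(C)=1-1-1-1+1=-1$.

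\emph{$4\times 4$ minors.} Among the fifteen $4$-subsets, exactly four ($\{1,2,4,v\}$ and $\{1,3,4,v\}$ with $v\in\{5,6\}$) induce a $3$-cycle plus a sink, and these have $\pf^2=9$. All the others have $\pf^2=1$.

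\emph{Spectrum.} Therefore $\det(xI-C)=x^6+15x^4+47x^2+1$. Equivalently, $\sum_j\mu_j^4=\tfrac12\tr C^4=131$. So the $\mu_j^2$ are the roots of $t^3-15t^2+47t-1$, and the smallest is $\approx0.0214$. For the transitive tournament the $\mu_j^2$ are $7\pm4\sqrt3$ and $1$, with smallest $7-4\sqrt3\approx 0.0718$.

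\emph{Failure of majorization.} Both vectors sum to $15$. Here the two largest $\mu_j^2$ sum to $\approx14.979$, versus $8+4\sqrt3\approx14.928$ for the transitive tournament. So your Ky Fan inequality fails at $k=2$, and no switching or compression argument can prove that step. It is also not what Fang and Huang prove. Their result is the vector statement $(C^mx)^\downarrow\preceq_w M^mx^\downarrow$, which does not control sums of $\norm{Cu_i}_2^2$ over orthonormal frames.

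\textbf{The missing idea: you never need majorization.} The function $g(x)=\log\frac{1+x}{1-x}=2\sum_{s\ge0}x^{2s+1}/(2s+1)$ has nonnegative Taylor coefficients. Hence $\log Z(K)=\sum_{s\ge0}\tr|K|^{4s+2}/(2s+1)$, and it suffices to compare power sums $\tr|C|^{2m}\le\tr|U_N|^{2m}$ term by term. This is strictly weaker than majorization, and the example above satisfies it ($131\le195$).

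This is exactly what the paper does:
\begin{enumerate}
\item Apply Fang--Huang's vector majorization to $x=e_j$, giving $\norm{C^me_j}_2^2\le\norm{M^me_1}_2^2=(M^{2m})_{11}$.
\item Show $N(M^{2m})_{11}=\tr|U|^{2m}$, using a signed permutation that conjugates $M^2$ to $-U^2$ and a signed cyclic shift commuting with $U$ (so all diagonal entries of $(-U^2)^m$ are equal).
\item Sum over $j$.
\end{enumerate}
Your Pfaffian evaluation of the transitive case and the limits $\cosh1$ and $\cos1$ are correct and match the paper.
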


Let $U=U_N$ be the Seidel matrix of the transitive tournament, i.e. the $N\times N$ matrix with $U_{ij}=1$ for $i<j$ and $U_{ij}=-1$ for $i>j$, and zero diagonal. The proof proceeds in two steps. We will first show that $Z(K)\le Z(U/N)$, in that the transitive tournament attains the maximum value for $Z(K)$ (this holds for every $N$). Then, we will evaluate $Z(U/N)$ to be the ratio of two sums, which tend to $\cosh1$ and $\cos1$ as $N\to\infty$, respectively. 

For a real matrix $A$, define the matrix $|A|$ to be $|A|=(A^TA)^{1/2}$. Thus, $\tr|A|^{2m}$ is the sum of the $2m$-th powers of the singular values of $A$. The main step in showing $Z(K)\le Z(U/N)$ is to show that $\tr|C|^{2m}\le\tr|U|^{2m}$ for every tournament Seidel matrix $C$ of order $N$ and every positive integer $m$. The reason for this comparison is that $\log Z(K)$ can be expanded as a convergent series of such terms with positive coefficients, and so bounding each term by its value for the transitive tournament will in turn bound $Z(K)$ by $Z(U_N/N)$.

The argument for this comparison is due to Fang and Huang \cite[Section 3]{FangHuang2026}, who showed that $U_N$ maximizes $\tr(C^{4k})$ among tournament Seidel matrices $C$ of order $N$. For our application, we need this same comparison for sums of other even powers, which also follows straightforwardly from their proof. We first introduce the notation needed to reconstruct this argument.

For $x\in \mathbb R^N$, let $x^{\downarrow}\in\mathbb R^N$ be the vector consisting of the absolute value of the coordinates of $x$, rearranged in nonincreasing order. Let $\mathcal D_N$ denote the set of non-increasing vectors in $\mathbb R^N$ with nonnegative entries. Recall that the \textit{weak majorization order} $\preceq_w$ is a partial order on the vectors in $\mathcal D_N$, which is defined as follows. For $x,y\in\mathcal D_N$, we say $x\preceq_w y$ if $\sum_{j=1}^r x_j \le \sum_{j=1}^r y_j$ for every $1\le r \le N$.

It will also be helpful to introduce the matrix $M$, which allows us to compare the powers of $C$ and $U$. Define the $N\times N$ symmetric matrix $M = (M_{rs})_{r,s\in[N]}$ by
\[
    M_{rs}=\begin{cases}1 & r+s\le N, \\ 0 & r + s = N + 1, \\  (-1)^{r+s-N-2}&r+s\ge N+2.\end{cases}
\]
(This is called $K$ in \cite{FangHuang2026}.) For instance, if $N = 5$ then
\[
    M = \begin{bmatrix}
        1 & 1 & 1 & 1 & 0 \\
        1 & 1 & 1 & 0 & 1 \\
        1 & 1 & 0 & 1 & -1 \\
        1 & 0 & 1 & -1 & 1 \\
        0 & 1 & -1 & 1 & -1
    \end{bmatrix}.
\]

\begin{lemma}\label{lemma:trace comparison}
    For every tournament Seidel matrix $C$ of order $N$ and every integer $m\ge 1$,
    \[
        \tr|C|^{2m}\le\tr|U_N|^{2m}.
    \]
\end{lemma}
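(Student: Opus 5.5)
The plan is to deduce the trace inequality from a \emph{weak majorization} of singular values, following \cite[Section 3]{FangHuang2026}. Let $s(C)=(\sigma_1(C),\dots,\sigma_N(C))\in\mathcal D_N$ denote the singular value vector of $C$. The central claim will be that
\[
    s(C)\preceq_w s(U_N)
\]
for every tournament Seidel matrix $C$ of order $N$. In other words, every Ky Fan norm satisfies $\sum_{j\le r}\sigma_j(C)\le\sum_{j\le r}\sigma_j(U_N)$ for $1\le r\le N$.

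Granting this, the lemma follows from the standard fact (Tomi\'c--Weyl, or Karamata's inequality for weak majorization) that $\sum_j\phi(x_j)\le\sum_j\phi(y_j)$ whenever $x\preceq_w y$ and $\phi$ is convex and nondecreasing on $[0,\infty)$. We apply this with $\phi(t)=t^{2m}$. Since $\tr|C|^{2m}=\sum_j\sigma_j(C)^{2m}$, this gives exactly the statement, for every $m\ge1$ simultaneously. This is why the paper's observation that the argument covers all even powers, and not only $4k$, should be automatic: Fang--Huang's proof really controls the whole singular spectrum.

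To prove the weak majorization, I would use the variational formula for the Ky Fan $r$-norm:
\[
    \sum_{j\le r}\sigma_j(C)=\max\tr(X^TCY)=\max\sum_{i\ne j}C_{ij}\langle x_i,y_j\rangle,
\]
where the maximum is over $N\times r$ matrices $X,Y$ with orthonormal columns, and $x_i,y_j\in\mathbb R^r$ are their rows. It then suffices to show that for any fixed $X,Y$ and any sign pattern $C_{ij}=-C_{ji}\in\{\pm1\}$, the bilinear sum is at most the Ky Fan norm of $U_N$.

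The matrix $M$ should enter at this point. Reversing the column order of $U_N$ gives a matrix whose $(r,s)$ entry depends only on $r+s$: it is $1$ for $r+s\le N$, $0$ on the antidiagonal, and $-1$ below it. So $U_N$ is, up to a permutation, a Hankel-type sign matrix closely related to $M$. I expect the key computation of Fang--Huang to be one of two things. Either $M$ is unitarily related to (a reflection of) $U_N$, so that $s(M)$ coincides with $s(U_N)$. Or $M$ arises as the matrix of coefficients after an Abel summation or rearrangement that sorts the quantities $\langle x_i,y_j\rangle$ (or partial sums of singular vectors). In the second case the optimal sign choice of a tournament is dominated by the transitive ordering.

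Concretely, I would first verify $s(M)=s(U_N)$ by exhibiting an explicit orthogonal/sign conjugation, checking small $N$ such as the displayed $N=5$ case. I would then prove $\|C\|_{(r)}\le\|M\|_{(r)}$ by a rearrangement argument on the sorted vector of row norms or partial sums.

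The main obstacle is this last comparison, showing that no tournament beats the transitive one in any Ky Fan norm. A naive convexity argument fails, because every tournament Seidel matrix is an extreme point of the relevant sign set. The first thing I would try is the following. Order the vertices so that the partial sums of the relevant singular-vector coordinates are monotone. Write the bilinear form via summation by parts as $\sum_{r,s}M_{rs}\,a_rb_s$ in terms of these partial sums. Then bound it termwise using $|C_{ij}|\le1$, which mirrors exactly how the sign pattern of $M$ arises from the transitive tournament.
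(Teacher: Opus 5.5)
Your central claim, the weak majorization $s(C)\preceq_w s(U_N)$ of singular-value vectors (every Ky Fan norm of $C$ dominated by that of $U_N$), is false. The route therefore fails at its first step, not merely at the ``main obstacle''.

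Already for $N=4$, take the tournament in which vertex $1$ beats everyone and $2\to3\to4\to2$. The columns of its Seidel matrix are $(0,-1,-1,-1)^T$, $(1,0,-1,1)^T$, $(1,1,0,-1)^T$, $(1,-1,1,0)^T$. These are pairwise orthogonal with squared length $3$, so $C^TC=3I$ and $\sum_j\sigma_j(C)=4\sqrt3\approx 6.93$. On the other hand, $U_4^TU_4$ has eigenvalues $3\pm2\sqrt2$, so $s(U_4)=(1+\sqrt2,1+\sqrt2,\sqrt2-1,\sqrt2-1)$ and $\sum_j\sigma_j(U_4)=4\sqrt2\approx 5.66$. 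The inequality fails at $r=N$.

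This is not a small-$N$ accident. For a Paley tournament $C^TC=NI-J$, so $\sum_j\sigma_j(C)=(N-1)\sqrt N$. By contrast, $\sum_j\sigma_j(U_N)=\sum_k|\cot((2k+1)\pi/(2N))|=O(N\log N)$. The reason is that $\sum_j\sigma_j(C)^2=N(N-1)$ for \emph{every} tournament, so $m=1$ is an equality. The transitive tournament spends this fixed $\ell_2$ mass on a few singular values of size about $2N/\pi$, which makes its $\ell_1$ mass small, and flat spectra beat it in the large Ky Fan norms.

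So the inequality for $m\ge2$ cannot come from singular-value weak majorization, and your variational, summation-by-parts plan for Ky Fan norms is aimed at a false target. Your guess $s(M)=s(U_N)$ is correct, since $M^2=SU_N^TU_NS^T$ for a signed permutation $S$, but it does not rescue the argument.

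What the paper uses instead is the \emph{vector-level} majorization of Fang--Huang \cite[Corollary 3.4]{FangHuang2026}: $(C^mx)^\downarrow\preceq_w M^mx^\downarrow$ for every $x\in\mathbb R^N$. Apply it only at $x=e_j$, so that $x^\downarrow=e_1$, and combine with Karamata for $t\mapsto t^2$. This gives
\[
    \norm{C^me_j}_2^2\le\norm{M^me_1}_2^2=(M^{2m})_{11},
\]
a bound on each \emph{diagonal entry} of $|C|^{2m}=(C^m)^TC^m$, not on its spectrum. One then checks that $(M^{2m})_{11}=((-U_N^2)^m)_{11}$, using a signed permutation $S$ with $Se_1=e_1$ and $-U_N^2=S^TM^2S$. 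All diagonal entries of $(-U_N^2)^m=|U_N|^{2m}$ coincide, because the signed cyclic shift $e_j\mapsto e_{j-1}$, $e_1\mapsto -e_N$ commutes with $U_N$. Summing over $j$ gives $\tr|C|^{2m}\le N(M^{2m})_{11}=\tr|U_N|^{2m}$. This column-by-column comparison is much weaker than a spectral majorization, which is exactly why it holds when the Ky Fan comparison does not.
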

\begin{proof}
    The case $N = 1$ is immediate, so assume $N\ge2$. By \cite[Proposition 3.1]{FangHuang2026} (Karamata's inequality) applied to $t\mapsto t^2$, we have that $u\preceq_w v$ implies $\norm u_2\le \norm v_2$. Moreover, applying \cite[Corollary 3.4]{FangHuang2026} with $A_1 = \dots = A_m = C$, we have $(C^mx)^\downarrow\preceq_w M^mx^\downarrow$ for every $x\in\mathbb R^N$. In particular, the vector $M^mx^\downarrow$ is nonnegative and nonincreasing. Denoting by $e_j$ the $j$th standard basis vector, we have $e_j^\downarrow = e_1$. Applying the Euclidean norm inequality and using $M^T = M$, we have for every $1 \le j \le N$ that
    \[
        \norm{C^me_j}_2^2\le\norm{M^m e_1}_2^2 = (M^{2m})_{11}.
    \]
    It remains to calculate the value of $(M^{2m})_{11}$. To do this, we directly apply two observations from the proof of \cite[Theorem 1.4]{FangHuang2026}. First, there exists an orthogonal signed permutation matrix $S$ such that $Se_1 = e_1$ and $-U^2 = S^TM^2S$. Namely $S = DP$, where $D$ is the diagonal matrix with $D_{ii}=(-1)^{i+1}$ and $P$ is the permutation matrix corresponding to the permutation $(\sigma(1),\sigma(2),\dots,\sigma(N))=(1, N, 2, N-1, 3, N-2,\dots)$, where $P_{i,\sigma(i)}=1$. By taking $m$th powers and comparing the first diagonal entries, we have $(M^{2m})_{11} = ((-U^2)^m)_{11}$. Second, the signed cyclic shift sending $e_j$ to $e_{j-1}$ for $j > 1$ and $e_1$ to $-e_N$ commutes with $U$, and therefore also commutes with $(-U^2)^m$. Conjugation by this shift cycles the diagonal entries of $(-U^2)^m$, so all diagonal entries of $(-U^2)^m$ are equal. Since $U^TU = -U^2$, it follows that $N(M^{2m})_{11} = \tr|U|^{2m}$. 

    Finally, since $C^T = -C$, we have $(C^m)^TC^m = (-C^2)^m=|C|^{2m}$. Summing the inequality $\norm{C^m e_j}_2^2 \le (M^{2m})_{11}$ over the standard basis vectors, we have
    \[
        \tr|C|^{2m} = \sum_{j=1}^N\norm{C^me_j}_2^2\le N(M^{2m})_{11} = \tr|U|^{2m},
    \]
    as desired.
\end{proof}

We now prove Proposition \ref{prop:Z(K) bound} by expanding $Z(K)$ using Lemma \ref{lem:skew-sym det} and then applying Lemma \ref{lemma:trace comparison} to each term in the expansion.

\begin{proof}[Proof of Proposition \ref{prop:Z(K) bound}]
    Let $K = C/N$. The sum of the absolute values in each row and each column of $K$ is $(N-1)/N$, so $\norm K_\op\le (N-1)/N < 1$, and the same holds for $U/N$. Since $K$ is real and skew-symmetric, its nonzero eigenvalues can be listed as $\pm i\lambda_1,\dots,\pm i\lambda_r$, with multiplicity. By Lemma \ref{lem:skew-sym det}, applied to $K$ and its eigenvalues, we have
    \[
        \log Z(K)=\sum_{j=1}^r\log\frac{1+\lambda_j^2}{1-\lambda_j^2}=\sum_{s=0}^\infty\frac{\tr|K|^{4s+2}}{2s+1}.
    \]
    Since $\norm K_\op < 1$, this series converges absolutely. By Lemma \ref{lemma:trace comparison} applied with $m = 2s + 1$, each numerator in the rightmost summation is at most $\tr|U/N|^{4s+2}$. Since all coefficients are positive, comparing this expansion of $\log Z(K)$ with the expansion of $\log Z(U/N)$ gives $\log Z(K) \le \log Z(U/N)$, and hence $Z(K)\le Z(U/N)$. 

    It remains to evaluate $Z(U/N)$. Every principal submatrix of $U$ is again the Seidel matrix of a transitive tournament. We claim that the Pfaffian $\pf(U_{2j})=1$ for every $j\ge 0$, with the empty Pfaffian defined as one. For $j\ge 1$, expansion along the first row gives
    \[
        \pf(U_{2j})=\sum_{\ell=2}^{2j}(-1)^\ell\pf(U_{2j-2})=\pf(U_{2j-2}),
    \]
    because the alternating sum has an odd number of terms and begins and ends with coefficient $+1$. The claim now follows from induction. Since $\det(U_{2j})=\pf(U_{2j})^2$, every even order principal determinant of $U$ is one. On the other hand, every odd principal determinant is zero, because the corresponding principal submatrix is skew-symmetric. The expansion of a determinant into principal minors therefore gives, for every $z\in\mathbb C$,
    \[
        \det(I-zU/N) = \sum_{j=0}^{\lfloor N/2\rfloor}\binom{N}{2j}(z/N)^{2j}.
    \]
    Taking $z=1$ and $z = i$, we have
    \[
        Z(U/N) = \frac{\sum_{j=0}^{\lfloor N/2\rfloor}\binom{N}{2j}N^{-2j}}{\sum_{j=0}^{\lfloor N/2\rfloor}(-1)^j\binom{N}{2j}N^{-2j}}.
    \]
    By the binomial theorem, the numerator of this ratio is $\frac12((1+N^{-1})^N+(1-N^{-1})^N)$, which tends to $\cosh 1$ as $N\to\infty$. The denominator is $\mathrm{Re}(1+i/N)^N$, which tends to $\cos(1)$ as $N\to\infty$. As the upper bound depends only on $N$, we have $Z(K) = Z(C/N) \le Z(U/N) \le  \cosh1/\cos1+o(1)$ uniformly over all normalized tournament Seidel matrices $K$ of order $N$, as desired.
\end{proof}

\section{Proof of the upper bound}\label{sec:upper}

In this section, we prove Theorem \ref{thm:upper}. We first show using Bregman's theorem (Fact \ref{fact:bregman}) that if a tournament $T$ has more Hamiltonian cycles than the random tournament, then aside from a small exceptional set of vertices the rest of $T$ is nearly regular.

\begin{proposition}\label{prop:polylog set}
    Let $T$ be a tournament on $n$ vertices with $C(T)\ge(n-1)!/2^n$. There exists a set $S\subseteq V(T)$ with $|S| = O(\log^4n)$ such that, if $G = T - S$ is a subtournament on $N$ vertices with normalized Seidel matrix $K$, then $\norm{K\one_N}_\infty\cdot\norm{K\one_N}_2^2 = o(1)$ and $\norm K_\op \le 2/\pi$.
\end{proposition}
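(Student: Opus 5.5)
The plan is to prove the two conclusions separately. The spectral bound is immediate, and the degree-imbalance bound will come from Bregman's theorem (Fact \ref{fact:bregman}) followed by a deletion argument.

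\textbf{Spectral bound.} This holds for every subtournament $G$ of order $N$, whatever $S$ is. Let $C=NK$ be the Seidel matrix of $G$. By Lemma \ref{lem:Seidel norm bound} and $\cot x\le 1/x$ for $0<x<\pi/2$, we get $\norm K_\op=\norm C_\op/N\le \cot(\pi/2N)/N\le 2/\pi$.

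\textbf{Step 1: Bregman gives a global imbalance bound.} Let $A$ be the adjacency matrix of $T$ and write $d_i=d^+_T(i)=\frac{n-1}2+a_i$, so that $\sum_i a_i=0$. Every Hamiltonian cycle is a $1$-factor, so $C(T)\le\per A$. If some $d_i=0$ then $C(T)=0$, contradicting the hypothesis, so all $d_i\ge1$. Normalizing row $i$ by $1/d_i$ and applying Fact \ref{fact:bregman} with $r_i=d_i$ gives
\[
    \per A\le\prod_i (d_i!)^{1/d_i}.
\]
By Stirling, $\log (d!)^{1/d}=\log d-1+O(\log (d+1)/d)$ for $d\ge 1$. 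The error terms contribute at most $O(\log n)$ in total once the mass of small $d_i$ is controlled; I would also check directly that few vertices can have tiny degree, since such vertices force $\per A$ far below $(n-1)!/2^n$.

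Writing $x_i=2a_i/(n-1)\in[-1,1]$, we have
\[
    \sum_i\log d_i=n\log\tfrac{n-1}2+\sum_i\log(1+x_i).
\]
I will use the concavity bound $\log(1+x)\le x-cx^2$ on $[-1,1]$ for an absolute $c>0$ (understood as $-\infty$ at $x=-1$, which is excluded anyway). Together with $\sum_i x_i=0$ this gives
\[
    \log\per A\le n\log\tfrac{n-1}{2e}-c\sum_i x_i^2+O(\log n).
\]
Comparing with the hypothesis $\log C(T)\ge\log((n-1)!/2^n)=n\log\frac{n-1}{2e}-O(\log n)$, obtained from Stirling, yields $\sum_i x_i^2=O(\log n)$. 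Since $(K_T\one_n)_i=(d_i^+-d_i^-)/n=2a_i/n\approx x_i$, this says $\norm{K_T\one_n}_2^2=O(\log n)$.

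\textbf{Step 2: delete the badly imbalanced vertices.} Let
\[
    S=\{i:|(K_T\one_n)_i|>\log^{-3/2}n\}.
\]
By Chebyshev counting, $|S|\le O(\log n)\cdot\log^3 n=O(\log^4 n)$. Put $G=T-S$ and $N=n-|S|$. For $i\notin S$, removing $S$ changes $d^+-d^-$ by at most $|S|$, so
\[
    (K\one_N)_i=\tfrac nN(K_T\one_n)_i+O(|S|/N).
\]
Hence $\norm{K\one_N}_\infty\le(1+o(1))\log^{-3/2}n+O(\log^4 n/n)$. Also
\[
    \norm{K\one_N}_2^2\le 2(1+o(1))\norm{K_T\one_n}_2^2+O(|S|^2/N)=O(\log n).
\]
The product is therefore $O(\log^{-1/2}n)=o(1)$, as required.

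\textbf{Main obstacle.} I expect the main obstacle to be the Stirling and concavity bookkeeping in Step 1, specifically making the $O(\log n)$ error uniform when some degrees are small. The term $\sum_i\log(2\pi d_i)/(2d_i)$ could exceed $O(\log n)$ if many $d_i$ are $O(1)$. I would handle this by noting that $(d!)^{1/d}\le d/e\cdot(e(2\pi d)^{1/(2d)})$ is uniformly bounded relative to $d/e$ by a constant factor. For small $d_i$ the penalty $\log(1+x_i)$ with $x_i$ near $-1$ is very negative, of order $\log n$ per vertex, and this dominates the constant loss. So such vertices only strengthen the inequality, and the clean bound $\sum_i x_i^2=O(\log n)$ survives.
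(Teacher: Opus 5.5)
Your proof is correct and follows essentially the same route as the paper. Both use Bregman plus Stirling to get $\sum_v (d^+_T(v)-\tfrac{n-1}{2})^2=O(n^2\log n)$, delete the vertices whose imbalance exceeds order $n\log^{-3/2}n$, and use Lemma \ref{lem:Seidel norm bound} with $\cot x\le 1/x$ for the spectral bound. The differences are minor. The paper averages the Bregman bounds for $A$ and $A^T$, whereas you use $\sum_i x_i=0$ to cancel the linear term. For the small-degree bookkeeping you flag, the paper splits at $|d-m|>m/2$ and uses concavity to gain a uniform constant there. That is the threshold your argument needs too: the $O(1)$ Stirling loss must be absorbed for all $d_i\le\epsilon n$ with $\epsilon$ a small absolute constant, not only for bounded $d_i$. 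Otherwise, vertices with $d_i\approx n/\log n$ could contribute $\Theta(\log^2 n)$ in total error.
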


\begin{proof}
    Let $A$ be the adjacency matrix of $T$, that is, the $n\times n$ matrix with rows and columns indexed by $V(T)$ where $A_{ij}=1$ if $ij$ is a directed edge in $T$. For each $v\in V(T)$, let $d_T^+(v)$ denote the outdegree of $v$, which equals the sum of the entries in row $v$ of $A$. Let $m = (n-1)/2$; we would hope that for $T$ close to a regular tournament, each vertex has indegree and outdegree approximately $m$. The permanent of $A$ counts the $1$-factors of $T$, and every Hamiltonian cycle of $T$ is a $1$-factor. Therefore, applying Fact \ref{fact:bregman} after normalizing the rows of $A$ gives
    \[
        \log C(T)\le \log \per A \le \sum_{v\in V(T)}\frac{\log(d^+_T(v)!)}{d^+_T(v)}.
    \]
    (Since $C(T)\ge(n-1)!/2^n$, there is at least one Hamiltonian cycle in $T$, so each vertex has positive indegree and outdegree; thus the above normalization is well-defined.)
    
    In the forthcoming computation, it will be convenient to make this summation symmetric in the indegrees and outdegrees of each vertex. To this end, note that reversing the orientation of every edge of $T$, which is equivalent to considering the adjacency matrix $A^T$, clearly preserves the number of Hamiltonian cycles. Since also $\per A = \per A^T$ and every vertex of $T$ has sum of indegree and outdegree $2m$, applying Fact \ref{fact:bregman} in the same way for the reversed tournament implies
    \[
        \log C(T) \le \frac12\sum_{v\in V(T)}\left(\frac{\log(d^+_T(v)!)}{d^+_T(v)}+\frac{\log((2m-d^+_T(v))!)}{2m-d^+_T(v)}\right).
    \]
    We will estimate each term in this sum using the bound $\log(a!)\le a\log a -a+\log a + 1$, which is a simple consequence of Stirling's approximation and holds for all $a \ge 1$. For any integer $1\le d\le 2m-1$, we have by this bound that
    \begin{align*}
        \frac12\left(\frac{\log(d!)}d+\frac{\log((2m-d)!)}{2m-d}\right)&\le\log m-1+\frac12\log\left(1-\frac{(d-m)^2}{m^2}\right)\\&+\frac12\left(\frac{\log d+1}d+\frac{\log(2m-d)+1}{2m-d}\right).
    \end{align*}
    Suppose first that $|d-m|\le m/2$. Then both $d$ and $2m-d$ are at least $m/2$, so the last two terms are at most $2(\log(2m)+1)/m$. Moreover, since $\log(1-a)\le -a$ for every $0\le a<1$, the middle logarithmic term is at most $-(d-m)^2/(2m^2)$. In particular, when $|d-m|\le m/2$, the preceding expression is at most
    \[
        \log m-1-\frac{(d-m)^2}{8m^2}+\frac{2(\log(2m)+1)}{m}.
    \]
    (The middle term has denominator $8m^2$ instead of $2m^2$ for future consistency.) It remains to consider the case $|d-m|>m/2$. By the symmetry, we may assume $d < m/2$. Consider the function $f(x) = \log x+\frac{\log x+1}{x}$, which is concave on $[1,\infty)$. By concavity, the sum $f(d)+f(2m-d)$ is maximized, for $1\le d\le m/2$, at $d=m/2$. Therefore, substituting $d=m/2$ shows that when $|d-m|>m/2$,
    \[
        \frac12\left(\frac{\log d!}d+\frac{\log((2m-d)!)}{2m-d}\right)\le\log m-1+\frac12\log\frac34+o(1).
    \]
    Since $\frac12\log\frac34<-1/8$, this is at most $\log m-1-\frac18$ for all sufficiently large $n$. As $(d-m)^2/m^2 \le 1$, it follows that in this range we also have
    \[
        \frac12\left(\frac{\log(d!)}{d}+\frac{\log((2m-d)!)}{2m-d}\right)\le\log m-1-\frac{(d-m)^2}{8m^2}+\frac{2(\log(2m)+1)}{m}.
    \]

    Thus, this bound holds for every $1\le d \le 2m-1$, so we may apply it with $d = d^+_T(v)$ for each $v \in V(T)$ to upper bound $\log C(T)$. Summing over all the vertices,
    \[
        \log C(T)\le n(\log m-1)-\frac1{8m^2}\sum_{v\in V(T)}(d^+_T(v)-m)^2 + O(\log n).
    \]
    On the other hand, we know $C(T) \ge (n-1)!/2^n$. Together with the lower bound $\log (a!)\ge a\log a-a + 1$ (which again holds for all $a\ge 1$), we have
    \begin{align*}
        \log C(T)&\ge \log((n-1)!)-n\log 2 \ge (n-1)\log(n-1)-(n-1)+1-n\log 2\\
        &= n(\log m-1)+O(\log n).
    \end{align*}
    Combining this with the upper bound on $\log C(T)$ yields
    \[
        \sum_{v\in V(T)}(d^+_T(v)-m)^2=O(n^2\log n).
    \]

    Next, we choose the set $S$ to be the vertices $v\in V(T)$ where $d^+_T(v)$ differs from $m$ by a significant amount. In particular, let $S$ be given by
    \[
        S = \left\{v\in V(T) : |d^+_T(v)-m| > \frac{n}{\log^{3/2}n}\right\}.
    \]
    Since $\sum_{v\in V(T)}(d^+_T(v)-m)^2=O(n^2\log n)$, it follows that $|S|=O(\log^4n)$. Let $G = T - S$ be the tournament on $N = n - |S|$ vertices obtained by deleting the vertices in $S$, and let $K$ be the normalized Seidel matrix of $G$. Let $d^+_G(v)$ be the number of outneighbors of $v$ in $G$, and denote by $d^+_S(v)$ the number of outneighbors of $v$ in $S$. Deleting $S$ changes the outdegrees of each $v \in V(G)$ so that $d^+_T(v)-d_G^+(v) \le |S|$. Since $K$ is the normalized Seidel matrix of $G$, we have
    \[
        (K\one_N)_v=\frac{2d^+_G(v)-(N-1)}N = \frac{2(d^+_T(v)-m)+|S|-2d^+_S(v)}{N}.
    \]
    From the definition of $S$, each vertex $v\in V(G)$ satisfies $|d_T^+(v)-m| = O(n\log^{-3/2}n)$, and each of the other two terms in the numerator is $O(\log^4n)$. Moreover, since $|S|=O(\log^4n)$, we have $N = (1-o(1))n$, and therefore $\norm{K\one_N}_\infty = O(\log^{-3/2}n)$. Next, we bound $\norm{K\one_N}_2^2$. Using the inequality $(a+b)^2\le2a^2+2b^2$, we have
    \[
        \norm{K\one_N}_2^2\le\frac{8}{N^2}\sum_{v\in V(G)}(d_T^+(v)-m)^2+\frac{2|S|^2}{N}.
    \]
    We previously established that the first term is $O(\log n)$, even when summing over all $v\in V(T)$. Since $|S| = O(\log^4n)$ and $N = (1-o(1))n$, the second term is $o(1)$. Therefore, $\norm{K\one_N}_2^2 = O(\log n)$, and so $\norm{K\one_N}_\infty\cdot\norm{K\one_N}_2^2 = O(\log^{-1/2}n)=o(1)$, as desired.

    It only remains to bound the spectral norm of $K$. Let $C$ be the Seidel matrix of $G$, so that $K = C/N$. By Lemma \ref{lem:Seidel norm bound}, we have $\norm{C}_\op\le\cot{\frac\pi{2N}}$. Consequently, $\norm{K}_\op\le\frac1N\cot\frac\pi{2N} \le \frac2\pi$. This proves Proposition \ref{prop:polylog set}.
\end{proof}

We are now ready to prove Proposition \ref{prop:insertions}. The upper bound Theorem \ref{thm:upper} will then follow immediately by combining Propositions \ref{prop:Z(K) bound}, \ref{prop:polylog set}, and \ref{prop:insertions}.

\begin{proposition}\label{prop:insertions}
    Let $\rho < 1$. Let $T$ be a tournament on $n$ vertices and let $S\subseteq V(T)$ satisfy $|S| = O(\log^4 n)$. Let $G = T - S$ be a subtournament on $N$ vertices with normalized Seidel matrix $K$. If $\norm{K\one_N}_\infty\cdot\norm{K\one_N}_2^2=o(1)$ and $\norm K_\op\le\rho$, then
    \[
        C(T)\le \left(\frac{\det(I-K)}{\det(I-iK)}+o(1)\right)\frac{(n-1)!}{2^n}.
    \]
\end{proposition}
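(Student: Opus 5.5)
We plan to decompose the Hamiltonian cycles of $T$ by the pattern they induce on $S$. Let $s=|S|$. Each Hamiltonian cycle $H$ of $T$ determines a directed linear forest $F$ on $S$, consisting of the edges of $H$ with both ends in $S$. Let its components be directed paths $P_1,\dots,P_\ell$ with $\ell=s-k$, where $k=|E(F)|$ (if $S=V(T)$ there is nothing to prove, since $N\to\infty$ is implicit). Each component $P_a$ runs from a first vertex $f_a$ to a last vertex $l_a$ and sits in $H$ as $x_a\to P_a\to y_a$ with $x_a,y_a\in V(G)$. Contracting each $x_a\to P_a\to y_a$ to a single transition $x_a\to y_a$ gives a cyclic permutation $\sigma$ of $V(G)$. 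It satisfies $\sigma(x_a)=y_a$ and uses genuine edges of $G$ at every other vertex. The $x_a$ are distinct, and so are the $y_a$.

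\emph{Encoding as a sparse modification.} Fix $F$ and the predecessors $x=(x_a)$, with the $x_a$ distinct. Each $x_a$ must lie in $N_T^-(f_a)\cap V(G)$. Set $R=\{x_a\}$, $C=\emptyset$, and let $Q$ agree with $Q_0=J_N-I/N+K=\tfrac2N A_G$ outside the rows $R$. In row $x_a$ put the uniform probability vector $p^{(a)}$ on $Y_a=N^+_T(l_a)\cap V(G)$. Then
\[
\#\{H\text{ with pattern }(F,x)\}\le \Big(\tfrac N2\Big)^{N-\ell}\prod_a |Y_a|\,\cyc Q .
\]
The diagonal entry $Q_{x_ax_a}$ must be set to $0$; this only drops terms, so the inequality survives. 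We only apply this when $|Y_a|\ge N/(A_0)$ for a suitable $A_0$. If some $|Y_a|$ is small, we instead treat $l_a$ trivially by absorbing it into a crude bound; this is justified because Bregman already forces near-regular degrees outside $S$, but vertices of $S$ can have extreme degrees. I expect to handle this by splitting $Y_a$ only when it is large and bounding the small case separately, with the count damped by $|Y_a|$ itself. Proposition~\ref{prop:modifications} then gives
\[
\frac{N^N\cyc Q}{(N-1)!}\le(1+o(1))\exp\Big(1+\sum_{ij}Q_{ij}-N\Big)Z(K).
\]
Here $\sum_{ij}Q_{ij}-N=\ell-\ell\cdot O(1/N)-(\text{lost }Q_0\text{ row mass})+\sum(K\one)$-type terms. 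Since the rows of $Q_0$ sum to $1-1/N+(K\one)_i$, and $\sum_i(K\one)_i=0$, the exponent is $1+O(\ell\norm{K\one}_\infty)+o(1)$, so the exponential factor is $e\cdot(1+o(1))$ uniformly. (We must double check that the constant $e$ cancels against $\sum_{ij}(Q_0)_{ij}=N-1$; indeed $1+(N-1)-N=0$, so the factor is $1+o(1)$.)

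\emph{Summing over patterns.} For $k=0$ every component is a single vertex $v$, and $\prod_a |Y_a|$ is summed over distinct predecessors $x_a\in N^-(v)$. The sum is at most $\prod_{v\in S}d^-_G(v)d^+_G(v)\le (N/2)^{2s}$, because in-neighbourhoods and out-neighbourhoods partition $V(G)$, so $d^-d^+\le N^2/4$. Combined with $(N/2)^{N-s}$ and $(N-1)!/N^N$, this gives
\[
\frac{(N-1)!}{2^{N+s}}N^{s}\,(1+o(1))Z(K),
\]
which is $(1+o(1))Z(K)(n-1)!/2^n$ since $(n-1)!/(N-1)!=(1+o(1))N^{s}$ for $s=O(\log^4 n)$. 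For $k\ge1$, choose the edges of $F$ in $S$; there are at most $s^{2k}$ ways to choose the forest shape. Each edge removes one factor $N^2/4$ while saving one factor of $N$ from the ratio $(n-1)!/(N-1)!$. The contribution is therefore at most $(Cs^2/N)^k$ times the main term, and summing over $k\ge1$ gives $o(1)$ because $s^2/N\to0$.

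\emph{Main obstacle.} The hard part is the uniformity of the $1+o(1)$ factor across exponentially many pairs $(F,x)$, together with the degenerate vertices of $S$ whose in- or out-degree into $G$ is $o(N)$. For these, the rows $p^{(a)}$ violate the $A_0/N$ bound. My first attempt is to bound such patterns directly using Fact~\ref{fact:bregman} on the remaining matrix and the product $d^-d^+=o(N^2)$, which already yields $o(1)$ relative to the main term. Failing that, I would enlarge $R,C$ to absorb them.
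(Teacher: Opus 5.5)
Your architecture matches the paper's: the forest on $S$ plus one recorded attachment vertex per component, the other attachment encoded by a uniform probability vector in a sparse modification of $Q_0$, then Proposition~\ref{prop:modifications}, then a factor $O(s^2/N)$ per edge of $F$. However, two steps fail as written.

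\emph{Degenerate neighbourhoods.} By always recording the predecessor, you force row $x_a$ to be uniform on $Y_a=N^+_G(l_a)$. This set can be tiny even when $P_a$ is a single vertex, because vertices of $S$ may have arbitrary degrees into $G$, and then the $A_0/N$ hypothesis fails. Your fallbacks do not close this gap:
\begin{itemize}
\item Bregman-type bounds are sharp only up to polynomial factors in $N$; passing from $\cyc$ to $\per$ alone costs a factor $N$. A low-degree singleton $v$, by contrast, saves only the factor $4d^-_G(v)d^+_G(v)/N^2$, which is merely a constant when $\min\{d^-_G(v),d^+_G(v)\}\approx N/A_0$.
\item Replacing the row by the uniform vector on all of $V(G)$ overcounts by up to a factor $4$ per singleton, i.e.\ up to $4^{s}$ in total.
\end{itemize}
The missing idea is to record the attachment vertex on the \emph{smaller} side. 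Represent $P$ by its predecessor (a row, uniform on $N^+_G(v_P)$) if $a_P=|N^-_G(u_P)|\le b_P=|N^+_G(v_P)|$, and otherwise by its successor (a column, uniform on $N^-_G(u_P)$). For a singleton $a_P+b_P=N$, so the replacement vector lives on a set of size at least $N/2$. Meanwhile the number of choices times the weight is still $a_Pb_P\le N^2/4$. Only components containing an edge can have both neighbourhoods small. For these the paper uses the uniform vector on $V(G)$ minus the recorded vertex, and the resulting overcount is paid for by the factor $O(1/N)$ gained per edge. Your closing remark about enlarging $R,C$ gestures at this but does not specify it.

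\emph{The exponential factor.} This factor is not uniformly $1+o(1)$. With $\delta=K\one_N$, one has $1+\sum_{ij}Q_{ij}-N=\ell/N-\sum_{x\in R}\delta_x$ up to negligible terms. The identity $\sum_i\delta_i=0$ does not help, because only the $\ell$ replaced rows enter. The hypotheses give only $\norm\delta_\infty=o(1)$, while $\ell$ can be of order $\log^4 n$; in the intended application $\norm\delta_\infty$ is merely $O(\log^{-3/2}n)$. So $\ell\norm\delta_\infty$ need not tend to $0$, and individual patterns can carry a very large factor $e^{-\sum_{x\in R}\delta_x}$.

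You must keep $e^{-\delta_x}$ (or $e^{\delta_y}$ for columns) inside the sum over recorded vertices. Use $\sum_{x\in U}e^{\pm\delta_x}\le|U|+2\norm\delta_1$ together with H\"older, $\norm\delta_1^3\le N^2\norm\delta_\infty\norm\delta_2^2=o(N^2)$. Each component's weighted sum is then at most $N/2+O(\norm\delta_1)$ for a singleton, and $O(N)$ otherwise. The product over at most $s$ components therefore loses only $e^{O(s\norm\delta_1/N)}=1+o(1)$. This is precisely where the hypothesis $\norm{K\one_N}_\infty\norm{K\one_N}_2^2=o(1)$ is needed, and your argument never uses it in this form.

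By contrast, the uniformity across patterns of the $1+o(1)$ from Proposition~\ref{prop:modifications}, which you flag as the main obstacle, is not an issue. It is built into that proposition (Remark~\ref{remark:uniformity}).
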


\begin{proof}
    Let $s = |S|$, so $n = N+s$ and $s = O(\log^4N)$. Let $A$ be the adjacency matrix of $G$, with rows and columns indexed by $V(G)$ as before, and let $Q_0 = J_N-I/N+K=2A/N$. Let $H$ be a Hamiltonian cycle of $T$. The edges of $H$ with both endpoints in $S$ induce a forest whose components are directed paths. Roughly speaking, we will partition the Hamiltonian cycles of $T$ according to their paths through $S$, apply Proposition \ref{prop:modifications} to each part, and then sum over the possibilities for these paths.

    Let $\mathcal F$ be the set of forests $F\subseteq T[S]$ whose vertex set is $S$ and whose components are directed paths (possibly trivial ones consisting of a single vertex). The restriction of every Hamiltonian cycle of $T$ to $S$ is such a forest. For $F\in\mathcal F$, let $c(F)$ denote its set of components. For each $P\in c(F)$, let $u_P, v_P$ be its first and last vertices, and let $a_P = |N^-_G(u_P)|$ and $b_P=|N^+_G(v_P)|$ be the numbers of possible predecessors and successors of $P$ in $G$.

    We will further distinguish Hamiltonian cycles with the same forest $F\in\mathcal F$ by recording one vertex of $G$ (either a predecessor or a successor) for each component $P\in c(F)$. If $a_P\le b_P$, we record the predecessor $x_P\in N_G^-(u_P)$, and we say that $P$ is \textit{represented} by row $x_P$. Otherwise, we record the successor $y_P\in N_G^+(v_P)$ and represent $P$ by column $y_P$. Thus we record a vertex from the smaller neighborhood and encode the larger neighborhood in a replacement vector. A \textit{pattern} consists of $F$ and these recorded vertices. It suffices to consider only those patterns for which the recorded predecessors are distinct and the recorded successors are distinct (but a predecessor and a successor are allowed to coincide). Note that all Hamiltonian cycles are consistent with these restrictions, since every vertex has a unique predecessor and successor along a cycle.

    Let $\mathcal P_F$ be the set of patterns with forest $F$, and let $\mathcal P=\bigcup_{F\in\mathcal F}\mathcal P_F$. For $\pi\in\mathcal P$, let $R_\pi, C_\pi$ be its sets of recorded predecessors and successors, respectively, and let $C(T,\pi)$ be the number of Hamiltonian cycles of $T$ with pattern $\pi$. Every Hamiltonian cycle determines exactly one pattern, so $C(T) = \sum_{\pi\in\mathcal P} C(T,\pi)$.

    Fix $F\in\mathcal F$ and $\pi\in\mathcal P_F$, and let $k = |c(F)|$. Our plan will be to encode the rows and columns $R_\pi$ and $C_\pi$ of $Q_0$ by replacement vectors to create a matrix $Q_\pi$, to which we will then apply Proposition \ref{prop:modifications}. In order to apply the proposition, these replacement vectors must have entries of order $O(1/N)$, so we must be careful depending on whether $\max\{a_P,b_P\}$ is indeed of order $\Omega(N)$. For a component $P\in c(F)$ represented by a row, the replacement vector is the uniform probability vector on $N_G^+(v_P)\setminus\{x_P\}$ if $b_P\ge N/2$, and on $V(G)\setminus\{x_P\}$ otherwise. If $P$ is represented by a column, the replacement vector is the uniform probability vector on $N_G^-(u_P)\setminus\{y_P\}$ if $a_P\ge N/2$, and on $V(G)\setminus\{y_P\}$ otherwise. Let $h_P=\max\{a_P,b_P\}$ if this maximum is at least $N/2$, and let $h_P=N$ otherwise. Each such vector has between $N/2-1$ and $h_P$ nonzero coordinates, so its positive entries are therefore at least $h_P^{-1}$ and at most $3/N$ for all sufficiently large $N$.

    Let us briefly foreshadow why the replacement vectors are chosen this way. We need each entry in these vectors to be $O(1/N)$. If $P$ is a single vertex, then $a_P+b_P = N$. By including a vertex of the smaller of $N_G^+(u_P)$ and $N_G^-(u_P)$ in the pattern, the other neighborhood ranges over a set of size at least $N/2$, and hence the corresponding replacement vector has entries around $2/N$. However, if $P$ contains some edge of $F$, then $N_G^-(u_P)$ and $N_G^+(v_P)$ need not partition $V(G)$. It is therefore possible that both $a_P$ and $b_P$ are smaller than $N/2$, potentially even $o(N)$. In this case, we resign ourselves to setting the replacement vector to be uniform over all of $V(G)\setminus\{x_P\}$ or $V(G)\setminus\{y_P\}$. This amounts to permitting transitions which do not correspond to Hamiltonian cycles of $T$ (we end up overcounting), and the associated price of this relaxation later will be scaling $\cyc Q$ by a factor of $N$. However, we will see that components containing edges of $F$ will contribute enough additional decay to absorb this loss (this can be viewed as a consequence of the fact that Hamiltonian cycles are unlikely to use an edge in $S$, since $|S|$ is very small).

    Let $Q_\pi$ be obtained from $Q_0$ by the prescribed replacements in rows $R_\pi$ and columns $C_\pi$, except that every entry in $R_\pi\times C_\pi$ is zero. Outside these rows and columns, let $Q_\pi$ agree with $Q_0$. Since $|R_\pi|+|C_\pi|=k\le s$, this is a $3$-sparse modification of $Q_0$ in the sense of Definition \ref{def:sparse modifications} with zero diagonal.

    Deleting $S$ from a Hamiltonian cycle with pattern $\pi$ leaves a cyclic ordering of $V(G)$, in which each path $P\in c(F)$ becomes a single transition. No transition in this ordering lies in $R_\pi\times C_\pi$. Indeed, such a transition from $x_P$ to $y_{P'}$, say, would have to represent both $P$ and $P'$. As a transition through $S$ contains exactly one component of $F$, this would force $P = P'$, contrary to the rule that a component is represented by either a row or a column, but not both. (This justifies setting the entries of $R_\pi\times C_\pi$ to be zero in $Q_\pi$.) Conversely, for a cyclic ordering arising from a Hamiltonian cycle with pattern $\pi$, the original Hamiltonian cycle is uniquely determined by $\pi$ and the cyclic order. In particular, every path $P\in c(F)$ is ``reinserted'' immediately after its recorded predecessor or immediately before its recorded successor to reconstruct the Hamiltonian cycle. The $k$ transitions through $S$ have weights at least $h_P^{-1}$ in $Q_\pi$, and each of the remaining $N - k$ transitions is an edge of $G$ outside the modified rows and columns, and hence has weight $2/N$. Therefore, the number of Hamiltonian cycles of $T$ with pattern $\pi$ satisfies
    \[
        C(T,\pi)\le\left(\frac N2\right)^{N-k}\left(\prod_{P\in c(F)}h_P\right)\cyc Q_\pi.
    \]
    
    Applying Proposition \ref{prop:modifications} to $Q_\pi$ with $A_0 = 3$, we have
    \[
        \cyc Q_\pi\le (1+o(1))\frac{(N-1)!}{N^N}\exp\left(1+\sum_{x,y\in V(G)} (Q_\pi)_{xy} - N\right)Z(K).
    \]
    We next evaluate the exponential factor in this expression. Let $\delta = K\one_N$. The sum of the entries of $Q_0$ is $N-1$, its row sums are $1 - 1/N + \delta_x$, and its column sums are $1 - 1/N-\delta_y$. Replacing a row of $Q_0$ by a probability vector changes the total sum of its entries by $1/N-\delta_x$, and replacing a column changes it by $1/N+\delta_y$. Summing these changes over all rows and columns in $R_\pi$ and $C_\pi$ accounts for every entry of $Q_\pi$, except those which lie in $R_\pi\times C_\pi$. There are at most $s^2$ such entries, and the discrepancy at each is $O(1/N)$, so 
    \[
        1 + \sum_{x,y\in V(G)}(Q_\pi)_{xy} - N = \frac kN-\sum_{x\in R_\pi}\delta_x + \sum_{y\in C_\pi}\delta_y + O(s^2/N).
    \]
    As $k/N + O(s^2/N) = o(1)$, plugging in the bound for $\cyc Q_\pi$ into the bound for $C(T,\pi)$ gives
    \[
        C(T,\pi)\le(1+o(1))Z(K)\frac{(N-1)!}{2^N}\exp\left(-\sum_{x\in R_\pi}\delta_x+\sum_{y\in C_\pi}\delta_y\right)\prod_{P\in c(F)}\frac{2h_P}N.
    \]
    Moreover, the $(1+o(1))$ error is uniform over $\mathcal P$, as all the modifications use at most $s$ rows and columns and the same bound $3/N$ on their entries.

    We now wish to compute $\sum_{\pi \in\mathcal P_F}C(T,\pi)$, the sum  of this bound over the patterns $\pi$ sharing a fixed forest $F$. The exponential factor $\exp(-\sum_{x\in R_\pi}\delta_x+\sum_{y\in C_\pi}\delta_y)$ need not be close to one for each pattern, so we retain it in the sum. Its contribution from a component represented by a row is $e^{-\delta_{x_P}}$, and from a component represented by a column it is $e^{\delta_{y_P}}$. For $P\in c(F)$, define its ``weight'' as
    \[
        w_P = \begin{cases}\frac{2h_P}N\sum_{x\in N_G^-(u_P)}e^{-\delta_x} & a_P\le b_P\\ \frac{2h_P}{N}\sum_{y\in N_G^+(v_P)}e^{\delta_y} & a_P>b_P \end{cases}.
    \]
    Now note that the only constraints between the recorded vertices of different components are the distinctness requirements. Relaxing these requirements can only increase the sum of our nonnegative bounds, and the choices then become independent between different components. Moreover, $h_P$ depends only on $P$, so we have
    \[
        \sum_{\pi \in\mathcal P_F}C(T,\pi)\le (1+o(1))Z(K)\frac{(N-1)!}{2^N}\prod_{P\in c(F)}w_P.
    \]
    To estimate the product of the weights $w_P$, let $L = \norm\delta_1$. The hypothesis gives $\norm\delta_\infty^3\le\norm\delta_\infty\norm\delta_2^2=o(1)$, while H\"older's inequality gives
    \[
        L^3\le N^2\sum_{x\in V(G)}|\delta_x|^3\le N^2\norm\delta_\infty\norm\delta_2^2=o(N^2).
    \]
    Thus $\norm\delta_\infty=o(1)$ and $L=o(N^{2/3})$, so in particular $sL/N=o(1)$. For any $U\subseteq V(G)$, the bound $|e^{\pm\delta_x}-1|\le2|\delta_x|$ for $N$ sufficiently large yields $\sum_{x\in U}e^{\pm\delta_x}\le |U|+2L$. If $P$ is a single vertex, then $h_P = \max\{a_P,b_P\}$ and $a_P+b_P=N$, so $w_P\le 2a_Pb_P/N+4L\le N/2+4L$. For every other component, the bounds $h_P\le N$ and $\min\{a_P,b_P\}\le N$ give $w_P\le 2N+4L$.

    Let $r = |E(F)|$. Since $F$ is a forest with $k$ components, we have $k = s - r$ and at most $r$ components of $F$ contain an edge. Each component consisting of a single vertex contributes at most $(N/2)(1+8L/N)$, and every other component contributes at most $4(N/2)(1+8L/N)$. Multiplying these bounds and using the approximation $1+8L/N\le e^{8L/N}$, we have
    \[
        \prod_{P\in c(F)}w_P\le\left(\frac N2\right)^k4^re^{8sL/N}=(1+o(1))\left(\frac N2\right)^s\left(\frac8N\right)^r,
    \]
    where the error is again uniform over $\mathcal F$.

    It remains to sum over the forests $F\in\mathcal F$. The component estimate above gives a factor $8/N$ for each edge of $F$. Let $m = \binom s2$, and for $0\le r\le m$ let $\mathcal F_r$ be the forests in $\mathcal F$ with $r$ edges. Since $T[S]$ has exactly $m$ directed edges and a forest on the fixed vertex set $S$ is uniquely determined by its edge set, we have $|\mathcal F_r|\le\binom mr$. Note that not every choice of $r$ edges corresponds to a forest where every component is a path, but again relaxing this restriction can only increase the count. When $r = 0$, there is exactly one forest, where every path is a singleton.

    Every pattern belongs to exactly one $\mathcal P_F$, and every forest belongs to exactly one $\mathcal F_r$. Summing the bounds for a fixed forest and then applying the component estimate above gives
    \begin{align*}
        C(T)&=\sum_{\pi\in\mathcal P}C(T,\pi)\\
        &=\sum_{r=0}^m\sum_{F\in\mathcal F_r}\sum_{\pi\in\mathcal P_F}C(T,\pi)\\
        &\le (1+o(1))Z(K)\frac{(N-1)!}{2^N}\left(\frac N2\right)^s\sum_{r=0}^m|\mathcal F_r|\left(\frac 8N\right)^r\\
        &\le (1+o(1))Z(K)\frac{(N-1)!N^s}{2^n}\sum_{r=0}^m\binom mr\left(\frac8N\right)^r.
    \end{align*}
    By the binomial theorem, the summation factor is $(1+8/N)^m$, and since $m = O(s^2)=o(N)$, we have $1\le (1+8/N)^m\le e^{8m/N}=1+o(1)$. (For $r =0$ the term is already $1$, so the terms with $r\ge 1$ in this sum therefore had total $o(1)$; thus the total contribution from forests with at least one edge was negligible, as predicted.)

    Finally, $(n-1)!/(N-1)!=\prod_{j=0}^{s-1}(N+j)\ge N^s$, and Proposition \ref{prop:Z(K) bound} gives $Z(K) = O(1)$. Therefore,
    \[
        C(T)\le(Z(K)+o(1))\frac{(n-1)!}{2^n} = \left(\frac{\det(I-K)}{\det(I-iK)}+o(1)\right)\frac{(n-1)!}{2^n},
    \]
    which completes the proof of the proposition. 
\end{proof}

\begin{proof}[Proof of Theorem \ref{thm:upper}.]
    Let $T$ be a tournament on $n$ vertices. If $C(T)<(n-1)!/2^n$ then the conclusion is immediate because $\cosh1/\cos1>1$. We may therefore assume that $C(T)\ge (n-1)!/2^n$. By Proposition \ref{prop:polylog set}, there exists a set $S\subseteq V(T)$ with $|S|=O(\log^4n)$ such that, if $G = T - S$ has $N$ vertices and normalized Seidel matrix $K$, we have $\norm{K\one_N}_\infty\norm{K\one_N}_2^2=o(1)$ and $\norm K_\op\le2/\pi$. Let $\rho$ be a fixed constant satisfying $2/\pi<\rho<1$. For all sufficiently large $n$, the hypotheses of Proposition \ref{prop:insertions} hold, and hence
    \[
        C(T)\le(Z(K)+o(1))\frac{(n-1)!}{2^n}.
    \]
    Proposition \ref{prop:Z(K) bound} gives $Z(K)\le\cosh1/\cos1+o(1)$. Therefore,
    \[
        C(T)\le\left(\frac{\cosh1}{\cos1}+o(1)\right)\frac{(n-1)!}{2^n},
    \]
    which proves Theorem \ref{thm:upper}.
\end{proof}

\section{Proof of the lower bound}\label{sec:lower}

Recall that for $n$ odd, $T_n$ is the tournament with vertex set $\mathbb Z/n\mathbb Z$ and a directed edge from $i$ to $j$ if $j-i\pmod n<n/2$. If $n$ is even, let $R_n$ be the tournament obtained from $T_{n+1}$ by deleting the vertex $0$; and if $n$ is odd, let $R_n = T_n$. In this section, we first prove Theorem \ref{thm:lower}, which counts the number of Hamiltonian cycles in the tournaments $R_n$. This count will asymptotically match the upper bound on $C(n)$ established in the previous section. The crucial property of $T_n$ which we exploit is that its normalized tournament Seidel matrix asymptotically attains the value of $\cosh1/\cos 1$ in Proposition \ref{prop:Z(K) bound}. The tournament $R_n$ for $n$ even is introduced solely because it also inherits this property from $T_{n+1}$. We remark that, if $K$ is the normalized Seidel matrix of the uniform random tournament on $n$ vertices, then one can show $\lim_{n\to\infty}\mathbb E[Z(K)] = e < \cosh1/\cos 1$. 

We will then adapt the counting strategy slightly in order to show that the quantity $P(n)$ is also asymptotically maximized by $P(R_n)$. That is, we will show $P(R_n) = (\frac{\cosh1}{\cos1}-o(1))\frac{n!}{2^{n-1}}$. Together with Theorem \ref{thm:upper} and the inequality $P(n)\le 4C(n+1)$, this completes the proof of Theorem \ref{thm:Hamiltonian paths}.

We begin with the following lemma.

\begin{lemma}\label{lem:9.1}
    If $K$ is the normalized Seidel matrix of $R_n$, then for every $z\in \mathbb C$,
    \[
        \det(I-zK)=\sum_{j=0}^{\lfloor n/2\rfloor}\binom{n}{2j}\frac{z^{2j}}{n^{2j}}.
    \]
    In particular, $Z(K)\to\cosh1/\cos1$ as $n\to\infty$.
\end{lemma}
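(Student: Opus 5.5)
The plan is to reduce to the computation already carried out for the transitive tournament in the proof of Proposition \ref{prop:Z(K) bound}. There it was shown that every even-order principal minor of $U$ equals $1$ (its Pfaffian is $1$) and every odd-order principal minor vanishes. The principal-minor expansion then gave the formula for $\det(I-zU/N)$. So it suffices to show that every principal submatrix of the Seidel matrix $C$ of $R_n$ of even order $2j$ has determinant $1$. Odd-order principal minors of $C$ vanish automatically, since those submatrices are skew-symmetric. Expanding $\det(I-zC/n)=\sum_{W}(-z/n)^{|W|}\det C[W]$ over $W\subseteq[n]$ then gives exactly $\sum_j\binom n{2j}z^{2j}n^{-2j}$.

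To control the principal minors, I would exhibit a diagonal $\pm1$ signing together with a vertex ordering that turns $C$ into $U_n$. Concretely, I would find a diagonal matrix $E$ with $\pm1$ entries and a permutation matrix $\Pi$ such that $C=\Pi^T E U_n E\,\Pi$. Conjugating by $E$ reverses exactly the edges crossing the cut $\{E_{ii}=1\}$ versus $\{E_{ii}=-1\}$. Since principal minors are invariant under diagonal similarity and relabeling, every principal minor of $C$ would then equal the corresponding principal minor of $U_n$, and the previous paragraph applies.

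For $T_n$ with $n$ odd, I would order the vertices in the rotated order $0,1,\dots,n-1$ after the following modification. Take the ``diametrical'' cut $X=\{0,\dots,(n-1)/2\}$ versus $Y=\{(n+1)/2,\dots,n-1\}$, and reverse all edges between $X$ and $Y$. I then need to check that the resulting tournament is transitive in some order, as in the outline: arrange $Y$ first (shifted down by $n$, i.e.\ thought of as negative residues) and then $X$, or a similar interleaving. This check is the only real content. I would verify it by taking $i<j$ in $\mathbb Z/n$ and casewise comparing whether $j-i<n/2$ with whether $i,j$ lie on the same side of the cut. The cleanest version is likely to use the ordering $x\mapsto 2x \bmod n$, or to note that reversing the edges incident to the single side $\{(n+1)/2,\dots,n-1\}$ recursively produces an order. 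I expect this combinatorial verification, in particular finding the right ordering, to be the main obstacle. If a direct ordering is awkward, a fallback is to show the Pfaffian of every even principal submatrix of $C(T_n)$ is $\pm1$ by the same first-row expansion used for $U$.

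For $n$ even, $R_n$ is an induced subtournament of $T_{n+1}$. Its Seidel matrix is therefore a principal submatrix of that of $T_{n+1}$, so all its principal minors are principal minors of $C(T_{n+1})$, hence $1$ in even order and $0$ in odd order. The displayed identity follows with $n$ in place of $N$. Finally, setting $z=1$ and $z=i$ and using the binomial theorem exactly as at the end of the proof of Proposition \ref{prop:Z(K) bound} gives $\det(I-K)\to\cosh1$ and $\det(I-iK)=\mathrm{Re}(1+i/n)^n\to\cos1>0$, so $Z(K)\to\cosh1/\cos1$.
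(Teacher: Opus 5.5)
Your framework matches the paper's: switch across the diametrical cut $X=\{0,\dots,m\}$, $Y=\{m+1,\dots,2m\}$ (with $n=2m+1$), show the switched tournament is transitive so that $C=\Pi^TEU_nE\Pi$, and treat even $n$ by passing to a principal submatrix of $C(T_{n+1})$. Your even case and final limit are fine.

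The gap is that the one step with real content is never carried out: choosing the transitive order and checking it. Your first candidate order fails. Originally $i\to m+j$ iff $m+j-i\le m$ iff $j\le i$, so after reversal $i\to m+j$ exactly when $i<j$. For example $0\to m+1$, so no order that puts all of $Y$ before $X$ can be transitive; the two sides must interleave.

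Your other guess is correct. Placing $x$ at position $2x \bmod n$ gives the order $0,m+1,1,m+2,\dots,m-1,2m,m$, which is the one used in the paper. To verify it:
\begin{itemize}
\item A vertex $x\in X$ gets position $2x$, and $m+j\in Y$ gets position $2j-1$.
\item Within $X$ and within $Y$, every edge goes from the smaller label to the larger, since the differences lie in $\{1,\dots,m\}$; the order respects this.
\item Across the cut, $i\to m+j$ iff $i<j$ iff $2i<2j-1$, which again matches the order.
\end{itemize}
With this check inserted, your argument is complete and coincides with the paper's. The Pfaffian ``fallback'' is then unnecessary. It would also not be the same first-row induction used for $U$: rows of principal submatrices of $C(T_n)$ have mixed signs, so you would need sign bookkeeping that computation does not have.
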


\begin{proof}
    Our strategy will be to show that the Seidel matrix of $R_n$ is obtained from the Seidel matrix $U_n$ of the transitive tournament by conjugation with a signed permutation matrix. This process preserves the determinants in the statement, so our previous calculation for $Z(U_n/n)$ would complete the proof. In brief, we may justify the existence of such a signed permutation matrix as follows: arranging the vertices of $T_n$ in the (natural) cyclic order and then reversing the directed edges which cross a diametrical cut results in the transitive tournament on $n$ vertices. 

    More concretely, let $m\ge 0$, and consider $T_{2m+1}$ with $V(T_{2m+1})=\{0, 1, \dots, 2m\}$. Multiply the rows and columns of its Seidel matrix which are indexed by the set $\{m+1, \dots, 2m\}$ by $-1$. This reverses precisely the edges between this set of vertices and $\{0,\dots, m\}$. The resulting tournament is transitive, and has the transitive order
    \[
        0, m+1, 1, m+2, \dots, m-1, 2m, m.
    \]
    Indeed, edges within either set follow this order. For an edge between $i\in\{0,\dots, m\}$ and $m + j$ with $1 \le j \le m$, the original orientation is $i\to m+j$ whenever $j \le i$. After reversal, this now becomes $i \to m + j$ whenever $i < j$, which matches the above order.

    This proves the assertion when $n$ is odd. In the case that $n = 2m$ for some $m\ge1$, recall that $R_n$ is obtained from $T_{2m+1}$ by deleting vertex $0$. Deleting that vertex from the same construction above leaves a transitive tournament on $n$ vertices, and hence the assertion also holds when $n$ is even. 

    Multiplying the rows and columns of the Seidel matrix of $R_n$ as described above corresponds to conjugating this matrix by a signed permutation matrix. As this preserves determinants, we have $\det(I - zK) = \det(I - zU_n/n)$, and the result now follows from the proof of Proposition \ref{prop:Z(K) bound}. 
\end{proof}

We can now prove Theorem \ref{thm:lower} by applying Proposition \ref{prop:cyc bound} and using Lemma \ref{lem:9.1} in the final step. Unfortunately, for $n$ even, the tournament $R_n$ is not a regular tournament (and hence its adjacency matrix need not be a scalar multiple of a doubly stochastic one), so we will again need to apply Sinkhorn scaling and show that the corresponding scaling factors are negligible. This involves verifying the hypotheses for Propositions \ref{prop:sinkhorn} and \ref{prop:cyc bound}.

\begin{proof}[Proof of Theorem \ref{thm:lower}]
    Let $A$ be the adjacency matrix of $R_n$, and let $K = (A - A^T)/n$ be its normalized Seidel matrix. As before, set $Q = 2A/n = J_n-I/n+K$. We will apply Proposition \ref{prop:sinkhorn} to scale $Q$ to a doubly stochastic matrix.

    Let $u = Q\one_n-\one_n$ and $v = Q^T\one_n - \one_n$. If $n$ is odd, every row and column sum of $Q$ is $1-1/n$. If $n$ is even, these sums are either $1$ or $1 - 2/n$, since deleting a vertex from $T_{n+1}$ leaves every degree equal to $n/2$ or $n/2-1$. Therefore, $\norm u_\infty+\norm v_\infty = O(n^{-1})$ and $\norm u_2+\norm v_2 = O(n^{-1/2})$. 

    We now check the remaining hypothesis of Proposition \ref{prop:sinkhorn}. Let $P = \diag(\one_n+u)^{-1/2} Q \diag(\one_n+v)^{-1/2}$. Since $\norm u_\infty + \norm v_\infty=O(n^{-1})$, the estimate shows that both of the diagonal factors in $P$ differ in spectral norm from $I$ by order $O(n^{-1})$. Moreover, every row and column sum of $Q$ is at most $1$, so $\norm Q_\op\le 1$. Consequently, 
    \begin{align*}
        \norm{P-Q}_\op&\le\norm{\diag(\one_n+u)^{-1/2}-I}_\op\cdot\norm Q_\op\cdot\norm{\diag(\one_n+v)^{-1/2}}_\op \\&\phantom{aa}+ \norm Q_\op\cdot \norm{\diag(\one_n+v)^{-1/2}-I}_\op = O(n^{-1}).
    \end{align*}
    Since $J_n$ vanishes on $\one_n^\perp$, we have $Qz = (K-I/n)z$ for every $z\perp\one_n$. Therefore,
    \begin{align*}
        \sigma_2(P)&\le \sup_{\substack{z\perp\one_n\\\norm z_2=1}}\norm{Pz}_2 \le \norm K_\op + \frac1n + \norm{P-Q}_\op \\&\le \frac1n\cot\frac\pi{2n}+O(n^{-1})=  \frac2\pi+o(1),
    \end{align*}
    where the final inequality follows by applying Lemma \ref{lem:Seidel norm bound} to $nK$.

    Since $\norm Q_\mx\le 2/n$, all the hypotheses of Proposition \ref{prop:sinkhorn} are satisfied. Thus there exist vectors $x,y\in\mathbb R^n$ such that $D = \diag(e^x)Q\diag(e^y)$ is doubly stochastic and $\norm x_2+\norm y_2=O(n^{-1/2})$. In particular, $\norm x_\infty + \norm y_\infty=o(1)$, so in the forthcoming computation we may Taylor expand $e^{x_i+y_j}$ uniformly over all $i,j$.

    We first determine how this scaling affects the ratio between $\cyc Q$ and $\cyc D$. Let $s = \sum_{i=1}^n(x_i+y_i)$. The sum of the entries of $Q$ is $n-1$, whereas the sum of the entries of $D$ is $n$. Expanding $e^{x_i+y_j}$ and using the Euclidean norm bounds above, we have
    \[
        n = \sum_{1\le i,j\le n} Q_{ij}e^{x_i+y_j} = n-1 + s + O(n^{-1}).
    \]
    Indeed, the linear term is $s + \langle u,x\rangle+\langle v,y\rangle$, and the additional inner products and quadratic remainder are all of order $O(n^{-1})$, by the Euclidean norm bounds established earlier. Therefore, $s = 1 + O(n^{-1})$.

    We next verify the hypotheses of Proposition \ref{prop:cyc bound}. Let $B = D - J_n$. As in the proof of Proposition \ref{prop:modifications}, the bounds $Q_{ij}=O(n^{-1})$ and $\max_{1\le i,j\le n}|e^{x_i+y_j}-1|=o(1)$ imply that $\norm{D-Q}_F = o(1)$. Since $Q - J_n=K-I/n$, it follows that $\norm{B-K}_F=o(1)$. Consequently, $B$ has bounded Frobenius norm and $\norm B_\op\le2/\pi+o(1)$. Moreover, $D$ has zero diagonal and entries of order $O(n^{-1})$, so Proposition \ref{prop:cyc bound} applies.

    Since $\tr B=-1$ and $\tr K=0$, the two determinant comparisons at the end of the proof of Proposition \ref{prop:modifications} apply without change. Together with Lemma \ref{lem:skew-sym det}, this implies
    \[
        \frac{n^n\cyc D}{(n-1)!}=(1+o(1))\frac{\det(I-B)}{\sqrt{\det(I-B^TB)}}=(1+o(1))eZ(K).
    \]
    Finally, every term contributing to $\cyc Q$ contains exactly one entry from each row and column. Hence $\cyc(Q)=e^{-s}\cyc D$. Since $A = (n/2)Q$, we conclude that
    \[
        \frac{C(R_n)}{(n-1)!/2^n} = e^{-s}\frac{n^n\cyc D}{(n-1)!} = (1+o(1))e^{1-s}Z(K)\longrightarrow\frac{\cosh1}{\cos 1},
    \]
    where the limit follows from $s = 1+O(n^{-1})$ and Lemma \ref{lem:9.1}. This completes the proof of Theorem \ref{thm:lower}.
\end{proof}

\section{Hamiltonian paths}\label{sec:Hamiltonian paths}

In this section, we will prove Theorem \ref{thm:Hamiltonian paths}. The upper bound follows from $P(n) \le 4C(n+1)$, which in turn can be seen by adding a vertex with uniformly random adjacencies, and then counting Hamiltonian cycles in the resulting tournament on $n + 1$ vertices. Paralleling this, our strategy to count Hamiltonian paths in $R_n$ will be to add a new vertex which is adjacent in \textit{both directions} to every vertex in $R_n$, and then count the Hamiltonian cycles in the resulting directed graph. Despite not being a tournament, the normalized adjacency matrix of this directed graph can still be weighted to become doubly stochastic, so we will still be able to apply our Hamiltonian-cycle counting machinery.

\begin{proof}[Proof of Theorem \ref{thm:Hamiltonian paths}]
    The upper bound follows immediately from $P(n)\le4C(n+1)$ and Theorem \ref{thm:upper}. For the lower bound, we will show that $P(R_n)=(2n+o(n))C(R_n)$.

    If $M$ is an $n\times n$ matrix, we will define $\mathrm{path}(M)$ to be the weighted sum
    \[
        \mathrm{path}(M)=\sum_{\sigma\in S_n}\prod_{j=1}^{n-1}M_{\sigma(j),\sigma(j+1)}.
    \]
    Let $A,Q,D,x,y$ be defined as in the proof of Theorem \ref{thm:lower}. Thus $A = (n/2)Q$, $D = \diag(e^x)Q\diag(e^y)$ is doubly stochastic, and $\norm x_\infty+\norm y_\infty=o(1)$. Recall also that $D$ satisfies the hypotheses of Proposition \ref{prop:cyc bound}.

    We first compare $\mathrm{path}(D)$ with $\cyc D$. In effect, this will be done by introducing an auxiliary vertex $v$ as described before, then counting Hamiltonian cycles in the new directed graph. To this end, consider the $(n+1)\times(n+1)$ matrix
    \[
        \widehat D = \begin{bmatrix}0&\one_n^T/n\\\one_n/n&(1-1/n)D\end{bmatrix}.
    \]
    This matrix is doubly stochastic: the new row and column each sum to one, while the contribution of the new vertex $v$ (corresponding to the first row and column) is exactly compensated by the factor $1-1/n$. We will abuse notation and refer to Hamiltonian paths and cycles in $D$ and $\widehat D$, which really describe the Hamiltonian paths and cycles in the directed graphs these matrices encode. 
    
    Every Hamiltonian path in $D$ gives a unique Hamiltonian cycle in $\widehat D$, formed by closing the Hamiltonian path through this new vertex. The two directed edges going into and out of $v$ contribute a factor of $n^{-2}$ to $\cyc\widehat D$, while each of the $n-1$ original edges contributes an additional factor of $1-1/n$. Consequently,
    \[
        \cyc\widehat D = \frac1{n^2}\left(1-\frac1n\right)^{n-1}\mathrm{path}(D).
    \]

    We now apply Proposition \ref{prop:cyc bound} to both $D$ and $\widehat D$. Let $B = D - J_n$ and $\widehat B = \widehat D - J_{n+1}$. A direct computation of the blocks of these matrices yields
    \[
        \left\|\widehat B - \begin{bmatrix}0&0\\0&B\end{bmatrix}\right\|
        _F=O(n^{-1}).
    \]
    Since $B$ has bounded Frobenius norm and spectral norm bounded away from one, the same holds for $\widehat B$. Both matrices also have trace equal to $-1$, since $D$ and $\widehat D$ have zero diagonal. Thus, we may apply the same determinant comparisons as those used at the end of the proof of Proposition \ref{prop:modifications}, which show that the determinant factors associated with $B$ and $\widehat B$ differ by a factor $1+o(1)$. By Proposition \ref{prop:cyc bound}, we now have
    \[
        \frac{\cyc\widehat D}{\cyc D}=(1+o(1))\frac{n!\cdot n^n}{(n+1)^{n+1}(n-1)!} = (1+o(1))\left(\frac n{n+1}\right)^{n+1}.
    \]
    Since both $(n/(n+1))^{n+1}$ and $(1-1/n)^{n-1}$ tend to $e^{-1}$ as $n\to\infty$, the preceding identity yields
    \[
        \mathrm{path}(D) = (1+o(1))n^2\cyc D.
    \]

    It only remains to undo the Sinkhorn scaling between $D$ and $Q$. Let $s = \sum_{i=1}^n(x_i+y_i)$, as in the proof of Theorem \ref{thm:lower}. Every Hamiltonian cycle counted by $\cyc D$ acquires a factor of $e^s$ under this scaling. A Hamiltonian path beginning at $v_1$ and ending at $v_n$, on the other hand, acquires the factor $e^{s-x_{v_n}-y_{v_1}}$. Since $\norm x_\infty+\norm y_\infty=o(1)$, this is $(1+o(1))e^{s}$ uniformly over all the Hamiltonian paths. It follows that
    \[
        \mathrm{path}(Q)=(1+o(1))n^2\cyc Q.
    \]

    Finally, using $A = (n/2)Q$, we obtain
    \[
        P(R_n)=\left(\frac n2\right)^{n-1}\mathrm{path}(Q) = (2n+o(n))\left(\frac n2\right)^n\cyc Q = (2n+o(n))C(R_n).
    \]
    By Theorems \ref{thm:lower} and \ref{thm:upper}, we already know that
    \[
        C(R_n) = \left(\frac{\cosh1}{\cos1}+o(1)\right)\frac{(n-1)!}{2^n},
    \]
    and therefore
    \[
        P(n)\ge P(R_n) = \left(\frac{\cosh1}{\cos1}+o(1)\right)\frac{n!}{2^{n-1}},
    \]
    which completes the proof.
\end{proof}

\subsection*{Acknowledgments} We thank Noga Alon for many helpful discussions and comments. We would also like to thank Jay Pantone, who used an impressive computer program to guess based on numerics that the leading constant for $C(n)$ could be $\cosh 1/\cos 1$, and communicated these findings with Alon. 

This project began as an attempt to realize this constant in an improved lower bound for $C(n)$, a problem which seemed promising for AI models to tackle since it essentially consists of counting Hamiltonian cycles in the explicit tournament $T_n$. A proof was found by GPT-5.6 following some strategies supplied by the authors (the relevant one being a suggestion to search hard for suitable permanent estimates). In a surprising turn of events, after being input this proof and a few hopeful ideas, a separate instance of GPT-5.6 was able to generalize its techniques into an argument for the matching upper bound.

The purpose of this paper is to provide an exposition of this proof, particularly to explain the high-level strategies and intuition for the new key ideas. Many of the arguments were eventually replaced or simplified by the authors, and others originally came from the literature. The content of the present paper was written by the authors.

\bibliographystyle{amsplain}
\bibliography{references}

@article {Szele1943,
    AUTHOR = {Szele, Tibor},
     TITLE = {Kombinatorische Untersuchungen \"uber den gerichteten vollst\"andigen Graphen},
   JOURNAL = {Mat. Fiz. Lapok},
  FJOURNAL = {Publicationes Mathematicae Debrecen},
    VOLUME = {50},
      YEAR = {1943},
     PAGES = {223--256},
}

@article {Alon1990,
    AUTHOR = {Alon, Noga},
     TITLE = {The maximum number of {H}amiltonian paths in tournaments},
   JOURNAL = {Combinatorica},
  FJOURNAL = {Combinatorica. An International Journal on Combinatorics and
              the Theory of Computing},
    VOLUME = {10},
      YEAR = {1990},
    NUMBER = {4},
     PAGES = {319--324},
      ISSN = {0209-9683},
   MRCLASS = {05C20 (05C45 15A15)},
  MRNUMBER = {1099246},
MRREVIEWER = {Thomas\ H.\ Foregger},
       DOI = {10.1007/BF02128667},
       URL = {https://doi.org/10.1007/BF02128667},
}

@article {AdlerAlonRoss2001,
    AUTHOR = {Adler, Ilan and Alon, Noga and Ross, Sheldon M.},
     TITLE = {On the maximum number of {H}amiltonian paths in tournaments},
   JOURNAL = {Random Structures Algorithms},
  FJOURNAL = {Random Structures \& Algorithms},
    VOLUME = {18},
      YEAR = {2001},
    NUMBER = {3},
     PAGES = {291--296},
      ISSN = {1042-9832,1098-2418},
   MRCLASS = {05C45 (05C20)},
  MRNUMBER = {1824278},
       DOI = {10.1002/rsa.1010},
       URL = {https://doi.org/10.1002/rsa.1010},
}

@article {FriedgutKahn2005,
    AUTHOR = {Friedgut, Ehud and Kahn, Jeff},
     TITLE = {On the number of {H}amiltonian cycles in a tournament},
   JOURNAL = {Combin. Probab. Comput.},
  FJOURNAL = {Combinatorics, Probability and Computing},
    VOLUME = {14},
      YEAR = {2005},
    NUMBER = {5-6},
     PAGES = {769--781},
      ISSN = {0963-5483,1469-2163},
   MRCLASS = {05C45 (05C20)},
  MRNUMBER = {2174655},
MRREVIEWER = {Brian\ Alspach},
       DOI = {10.1017/S0963548305006863},
       URL = {https://doi.org/10.1017/S0963548305006863},
}

@article {Cuckler2007,
    AUTHOR = {Cuckler, Bill},
     TITLE = {Hamiltonian cycles in regular tournaments},
   JOURNAL = {Combin. Probab. Comput.},
  FJOURNAL = {Combinatorics, Probability and Computing},
    VOLUME = {16},
      YEAR = {2007},
    NUMBER = {2},
     PAGES = {239--249},
      ISSN = {0963-5483,1469-2163},
   MRCLASS = {05C45 (05C20)},
  MRNUMBER = {2298811},
MRREVIEWER = {J.\ W.\ Moon},
       DOI = {10.1017/S0963548306008170},
       URL = {https://doi.org/10.1017/S0963548306008170},
}

@misc{Wormald,
      title={Tournaments with many Hamilton cycles}, 
      author={N. C. Wormald},
      year={2006},
      eprint={2610.16245},
      archivePrefix={arXiv},
      primaryClass={math.CO},
      url={https://arxiv.org/abs/2610.16245}, 
}

@article {Egorychev1981,
    AUTHOR = {Egorychev, G. P.},
     TITLE = {Proof of the van der {W}aerden conjecture for permanents},
   JOURNAL = {Sibirsk. Mat. Zh.},
  FJOURNAL = {Akademiya Nauk SSSR. Sibirskoe Otdelenie. Sibirski\u i\
              Matematicheski\u i\ Zhurnal},
    VOLUME = {22},
      YEAR = {1981},
    NUMBER = {6},
     PAGES = {65--71, 225},
      ISSN = {0037-4474},
   MRCLASS = {15A15},
  MRNUMBER = {638007},
MRREVIEWER = {S.\ Lajos},
}

@article {Falikman1981,
    AUTHOR = {Falikman, D. I.},
     TITLE = {Proof of the van der {W}aerden conjecture on the permanent of
              a doubly stochastic matrix},
   JOURNAL = {Mat. Zametki},
  FJOURNAL = {Akademiya Nauk SSSR. Matematicheskie Zametki},
    VOLUME = {29},
      YEAR = {1981},
    NUMBER = {6},
     PAGES = {931--938, 957},
      ISSN = {0025-567X},
   MRCLASS = {15A15 (15A51)},
  MRNUMBER = {625097},
MRREVIEWER = {Anna\ Lee},
}

@article {Bregman1973,
    AUTHOR = {Br\`egman, L. M.},
     TITLE = {Certain properties of nonnegative matrices and their
              permanents},
   JOURNAL = {Dokl. Akad. Nauk SSSR},
  FJOURNAL = {Doklady Akademii Nauk SSSR},
    VOLUME = {211},
      YEAR = {1973},
     PAGES = {27--30},
      ISSN = {0002-3264},
   MRCLASS = {15A15},
  MRNUMBER = {327788},
MRREVIEWER = {E.\ Seneta},
}

@article {McCullagh2014,
    AUTHOR = {McCullagh, Peter},
     TITLE = {An asymptotic approximation for the permanent of a doubly
              stochastic matrix},
   JOURNAL = {J. Stat. Comput. Simul.},
  FJOURNAL = {Journal of Statistical Computation and Simulation},
    VOLUME = {84},
      YEAR = {2014},
    NUMBER = {2},
     PAGES = {404--414},
      ISSN = {0094-9655,1563-5163},
   MRCLASS = {15B51 (15A15 60J22 65F30)},
  MRNUMBER = {3169336},
MRREVIEWER = {Gi-Sang\ Cheon},
       DOI = {10.1080/00949655.2012.712122},
       URL = {https://doi.org/10.1080/00949655.2012.712122},
}

@article {Gurvits2008,
    AUTHOR = {Gurvits, Leonid},
     TITLE = {Van der {W}aerden/{S}chrijver-{V}aliant like conjectures and
              stable (aka hyperbolic) homogeneous polynomials: one theorem
              for all},
      NOTE = {With a corrigendum},
   JOURNAL = {Electron. J. Combin.},
  FJOURNAL = {Electronic Journal of Combinatorics},
    VOLUME = {15},
      YEAR = {2008},
    NUMBER = {1},
     PAGES = {Research Paper 66, 26},
      ISSN = {1077-8926},
   MRCLASS = {15A15 (05A15 05C70)},
  MRNUMBER = {2411443},
MRREVIEWER = {Peter\ M.\ Gibson},
       DOI = {10.37236/790},
       URL = {https://doi.org/10.37236/790},
}

@book {AlonSpencer2016,
    AUTHOR = {Alon, Noga and Spencer, Joel H.},
     TITLE = {The probabilistic method},
    SERIES = {Wiley Series in Discrete Mathematics and Optimization},
   EDITION = {Fourth},
 PUBLISHER = {John Wiley \& Sons, Inc., Hoboken, NJ},
      YEAR = {2016},
     PAGES = {xiv+375},
      ISBN = {978-1-119-06195-3},
   MRCLASS = {60-02 (05C80 05D40 60C05 60F10 60G42)},
  MRNUMBER = {3524748},
}

@book {ErdosSpencer1974,
    AUTHOR = {Erd\H{o}s, Paul and Spencer, Joel},
     TITLE = {Probabilistic methods in combinatorics},
    SERIES = {Probability and Mathematical Statistics},
    VOLUME = {Vol. 17},
 PUBLISHER = {Academic Press [Harcourt Brace Jovanovich, Publishers], New
              York-London},
      YEAR = {1974},
     PAGES = {106},
   MRCLASS = {05-02},
  MRNUMBER = {382007},
MRREVIEWER = {P.\ E.\ O'Neil},
}

@book {Spencer1994,
    AUTHOR = {Spencer, Joel},
     TITLE = {Ten lectures on the probabilistic method},
    SERIES = {CBMS-NSF Regional Conference Series in Applied Mathematics},
    VOLUME = {64},
   EDITION = {Second},
 PUBLISHER = {Society for Industrial and Applied Mathematics (SIAM),
              Philadelphia, PA},
      YEAR = {1994},
     PAGES = {vi+88},
      ISBN = {0-89871-325-0},
   MRCLASS = {05C80 (05C55 60C05)},
  MRNUMBER = {1249485},
MRREVIEWER = {Andrzej\ Ruci\'nski},
       DOI = {10.1137/1.9781611970074},
       URL = {https://doi.org/10.1137/1.9781611970074},
}

@incollection {Thomassen1982,
    AUTHOR = {Thomassen, Carsten},
     TITLE = {Hamilton circuits in regular tournaments},
 BOOKTITLE = {Cycles in graphs ({B}urnaby, {B}.{C}., 1982)},
    SERIES = {North-Holland Math. Stud.},
    VOLUME = {115},
     PAGES = {159--162},
 PUBLISHER = {North-Holland, Amsterdam},
      YEAR = {1985},
      ISBN = {0-444-87803-3},
   MRCLASS = {05C45 (05C20)},
  MRNUMBER = {821514},
MRREVIEWER = {R.\ C.\ Entringer},
       DOI = {10.1016/S0304-0208(08)73005-6},
       URL = {https://doi.org/10.1016/S0304-0208(08)73005-6},
}

@article {FerberKrivelevichSudakov2017,
    AUTHOR = {Ferber, Asaf and Krivelevich, Michael and Sudakov, Benny},
     TITLE = {Counting and packing {H}amilton cycles in dense graphs and
              oriented graphs},
   JOURNAL = {J. Combin. Theory Ser. B},
  FJOURNAL = {Journal of Combinatorial Theory. Series B},
    VOLUME = {122},
      YEAR = {2017},
     PAGES = {196--220},
      ISSN = {0095-8956,1096-0902},
   MRCLASS = {05C45 (05C20 05C30 05C70)},
  MRNUMBER = {3575203},
MRREVIEWER = {Andrew\ Clark\ Treglown},
       DOI = {10.1016/j.jctb.2016.06.001},
       URL = {https://doi.org/10.1016/j.jctb.2016.06.001},
}

@misc{FangHuang2026,
      title={Maximizing directed cycles in tournaments}, 
      author={Yijia Fang and Hao Huang},
      year={2026},
      eprint={2608.06238},
      archivePrefix={arXiv},
      primaryClass={math.CO},
      url={https://arxiv.org/abs/2608.06238}, 
}

@article {KendallSmith1940,
    AUTHOR = {Kendall, M. G. and Babington Smith, B.},
     TITLE = {On the method of paired comparisons},
   JOURNAL = {Biometrika},
  FJOURNAL = {Biometrika},
    VOLUME = {31},
      YEAR = {1940},
     PAGES = {324--345},
      ISSN = {0006-3444,1464-3510},
   MRCLASS = {62.0X},
  MRNUMBER = {2761},
MRREVIEWER = {S.\ S.\ Wilks},
       DOI = {10.1093/biomet/31.3-4.324},
       URL = {https://doi.org/10.1093/biomet/31.3-4.324},
}

@article {Colombo1964,
    AUTHOR = {Colombo, Umberto},
     TITLE = {Sui circuiti nei grafi completi},
   JOURNAL = {Boll. Un. Mat. Ital. (3)},
  FJOURNAL = {Boll. Un. Mat. Ital. (3)},
    VOLUME = {19},
      YEAR = {1964},
     PAGES = {153--170},
   MRCLASS = {55.10 (05.60)},
  MRNUMBER = {172262},
MRREVIEWER = {R.\ C.\ Read},
}

@article {BeinekeHarary1965,
    AUTHOR = {Beineke, Lowell W. and Harary, Frank},
     TITLE = {The maximum number of strongly connected subtournaments},
   JOURNAL = {Canad. Math. Bull.},
  FJOURNAL = {Canadian Mathematical Bulletin. Bulletin Canadien de
              Math\'ematiques},
    VOLUME = {8},
      YEAR = {1965},
     PAGES = {491--498},
      ISSN = {0008-4395,1496-4287},
   MRCLASS = {05.65},
  MRNUMBER = {181581},
MRREVIEWER = {J.\ W.\ Moon},
       DOI = {10.4153/CMB-1965-035-x},
       URL = {https://doi.org/10.4153/CMB-1965-035-x},
}

@article {Savchenko2016,
    AUTHOR = {Savchenko, S. V.},
     TITLE = {On 5-cycles and 6-cycles in regular {$n$}-tournaments},
   JOURNAL = {J. Graph Theory},
  FJOURNAL = {Journal of Graph Theory},
    VOLUME = {83},
      YEAR = {2016},
    NUMBER = {1},
     PAGES = {44--77},
      ISSN = {0364-9024,1097-0118},
   MRCLASS = {05C20 (05C38 05C50)},
  MRNUMBER = {3529947},
MRREVIEWER = {Xiaogen\ Chen},
       DOI = {10.1002/jgt.21914},
       URL = {https://doi.org/10.1002/jgt.21914},
}

@article {KomarovMackey2017,
    AUTHOR = {Komarov, Natasha and Mackey, John},
     TITLE = {On the number of 5-cycles in a tournament},
   JOURNAL = {J. Graph Theory},
  FJOURNAL = {Journal of Graph Theory},
    VOLUME = {86},
      YEAR = {2017},
    NUMBER = {3},
     PAGES = {341--356},
      ISSN = {0364-9024,1097-0118},
   MRCLASS = {05C35 (05C20 05C38)},
  MRNUMBER = {3697928},
MRREVIEWER = {Guantao\ Chen},
       DOI = {10.1002/jgt.22130},
       URL = {https://doi.org/10.1002/jgt.22130},
}

@article {Grzesik2023,
    AUTHOR = {Grzesik, Andrzej and Kr\'al', Daniel and Lov\'asz,
              L\'aszl\'o{} M. and Volec, Jan},
     TITLE = {Cycles of a given length in tournaments},
   JOURNAL = {J. Combin. Theory Ser. B},
  FJOURNAL = {Journal of Combinatorial Theory. Series B},
    VOLUME = {158},
      YEAR = {2023},
     PAGES = {117--145},
      ISSN = {0095-8956,1096-0902},
   MRCLASS = {05C35 (05C20 05C38)},
  MRNUMBER = {4513820},
MRREVIEWER = {J.\ W.\ Moon},
       DOI = {10.1016/j.jctb.2022.07.007},
       URL = {https://doi.org/10.1016/j.jctb.2022.07.007},
}

@article {Bartley2018,
    AUTHOR = {Bartley, Jack P.},
    TITLE = {Topics in Extremal and Probabilistic Combinatorics, PhD Thesis},
    JOURNAL = {Queen Mary University of London},
    YEAR = {2018},
    URL = {https://qmro.qmul.ac.uk/xmlui/handle/123456789/56858},
}

@phdthesis {Day2017,
    AUTHOR = {Day, Alan N.},
    TITLE = {A Collection of Problems in Extremal Combinatorics},
    JOURNAL = {Queen Mary University of London},
    YEAR = {2017},
    URL = {https://qmro.qmul.ac.uk/xmlui/handle/123456789/36669}
}

@article {FoxHimwichManiZhou2025,
    AUTHOR = {Fox, Jacob and Himwich, Zoe and Mani, Nitya and Zhou, Yunkun},
     TITLE = {A note on directed analogues of the {S}idorenko and forcing
              conjectures},
   JOURNAL = {Electron. J. Combin.},
  FJOURNAL = {Electronic Journal of Combinatorics},
    VOLUME = {32},
      YEAR = {2025},
    NUMBER = {3},
     PAGES = {Paper No. 3.38, 14},
      ISSN = {1077-8926},
   MRCLASS = {05C20 (05D40)},
  MRNUMBER = {4958048},
       DOI = {10.37236/11675},
       URL = {https://doi.org/10.37236/11675},
}

@article {FoxHimwichManiZhou2026,
    AUTHOR = {Fox, Jacob and Himwich, Zoe and Mani, Nitya and Zhou, Yunkun},
     TITLE = {Variations on Sidorenko's conjecture in tournaments},
   JOURNAL = {J. Graph Theory, to appear},
   eprint={2402.08418}
}

@article {ZhaoZhou2020,
    AUTHOR = {Zhao, Yufei and Zhou, Yunkun},
     TITLE = {Impartial digraphs},
   JOURNAL = {Combinatorica},
  FJOURNAL = {Combinatorica. An International Journal on Combinatorics and
              the Theory of Computing},
    VOLUME = {40},
      YEAR = {2020},
    NUMBER = {6},
     PAGES = {875--896},
      ISSN = {0209-9683,1439-6912},
   MRCLASS = {05C20 (05C75)},
  MRNUMBER = {4208101},
MRREVIEWER = {C\'esar\ Hern\'andez-Cruz},
       DOI = {10.1007/s00493-020-4280-0},
       URL = {https://doi.org/10.1007/s00493-020-4280-0},
}

@article {SahSawhneyZhao2023,
    AUTHOR = {Sah, Ashwin and Sawhney, Mehtaab and Zhao, Yufei},
     TITLE = {Paths of given length in tournaments},
   JOURNAL = {Comb. Theory},
  FJOURNAL = {Combinatorial Theory},
    VOLUME = {3},
      YEAR = {2023},
    NUMBER = {2},
     PAGES = {Paper No. 5, 6},
      ISSN = {2766-1334},
   MRCLASS = {05C38 (05C20)},
  MRNUMBER = {4646086},
MRREVIEWER = {Qiaoping\ Guo},
       DOI = {10.5070/c63261981},
       URL = {https://doi.org/10.5070/c63261981},
}

@misc{HeManiNieTungWei2025,
      title={New Sidorenko-type inequalities in tournaments}, 
      author={Xiaoyu He and Nitya Mani and Jiaxi Nie and Nathan Tung and Fan Wei},
      year={2025},
      eprint={2512.11222},
      archivePrefix={arXiv},
      primaryClass={math.CO},
      url={https://arxiv.org/abs/2512.11222}, 
}

@misc{ChenClemenNoel2026,
      title={On Tournament Anti-Sidorenko Orientations of Trees}, 
      author={Hao Chen and Felix Christian Clemen and Jonathan A. Noel},
      year={2026},
      eprint={2605.14138},
      archivePrefix={arXiv},
      primaryClass={math.CO},
      url={https://arxiv.org/abs/2605.14138}, 
}

@article {Liu1991,
    AUTHOR = {Liu, C. J.},
     TITLE = {Enumeration of {H}amiltonian cycles and paths in a graph},
   JOURNAL = {Proc. Amer. Math. Soc.},
  FJOURNAL = {Proceedings of the American Mathematical Society},
    VOLUME = {111},
      YEAR = {1991},
    NUMBER = {1},
     PAGES = {289--296},
      ISSN = {0002-9939,1088-6826},
   MRCLASS = {05C30 (05C38 05C45 15A15 15A18)},
  MRNUMBER = {1028288},
MRREVIEWER = {Yong\ Bing\ Shi},
       DOI = {10.2307/2047891},
       URL = {https://doi.org/10.2307/2047891},
}

@misc{SawczukGnang2025,
      title={Two Proofs of the Hamiltonian Cycle Identity}, 
      author={Hamilton Sawczuk and Edinah Gnang},
      year={2025},
      eprint={2510.02473},
      archivePrefix={arXiv},
      primaryClass={math.CO},
      url={https://arxiv.org/abs/2510.02473}, 
}

@book {HornJohnson1991,
    AUTHOR = {Horn, Roger A. and Johnson, Charles R.},
     TITLE = {Topics in matrix analysis},
      NOTE = {Corrected reprint of the 1991 original},
 PUBLISHER = {Cambridge University Press, Cambridge},
      YEAR = {1994},
     PAGES = {viii+607},
      ISBN = {0-521-46713-6},
   MRCLASS = {15-02 (15-01 65-02 65Fxx)},
  MRNUMBER = {1288752},
}

@article {Barvinok2008,
    AUTHOR = {Barvinok, Alexander},
     TITLE = {Enumerating contingency tables via random permanents},
   JOURNAL = {Combin. Probab. Comput.},
  FJOURNAL = {Combinatorics, Probability and Computing},
    VOLUME = {17},
      YEAR = {2008},
    NUMBER = {1},
     PAGES = {1--19},
      ISSN = {0963-5483,1469-2163},
   MRCLASS = {05A15},
  MRNUMBER = {2376421},
MRREVIEWER = {Ruriko\ Yoshida},
       DOI = {10.1017/S0963548307008668},
       URL = {https://doi.org/10.1017/S0963548307008668},
}

@inproceedings {Kwok18,
    AUTHOR = {Kwok, Tsz Chiu and Lau, Lap Chi and Lee, Yin Tat and
              Ramachandran, Akshay},
     TITLE = {The {P}aulsen problem, continuous operator scaling, and
              smoothed analysis},
 BOOKTITLE = {S{TOC}'18---{P}roceedings of the 50th {A}nnual {ACM} {SIGACT}
              {S}ymposium on {T}heory of {C}omputing},
     PAGES = {182--189},
 PUBLISHER = {ACM, New York},
      YEAR = {2018},
      ISBN = {978-1-4503-5559-9},
   MRCLASS = {42C15},
  MRNUMBER = {3826245},
}

@article {Wigderson2020,
    AUTHOR = {Garg, Ankit and Gurvits, Leonid and Oliveira, Rafael and
              Wigderson, Avi},
     TITLE = {Operator scaling: theory and applications},
   JOURNAL = {Found. Comput. Math.},
  FJOURNAL = {Foundations of Computational Mathematics. The Journal of the
              Society for the Foundations of Computational Mathematics},
    VOLUME = {20},
      YEAR = {2020},
    NUMBER = {2},
     PAGES = {223--290},
      ISSN = {1615-3375,1615-3383},
   MRCLASS = {68W30 (15A03 15A09 16R10 68Q25 68W20)},
  MRNUMBER = {4081171},
       DOI = {10.1007/s10208-019-09417-z},
       URL = {https://doi.org/10.1007/s10208-019-09417-z},
}

@article{Pick1922,
author = {Pick, Georg},
title = {Über die Wurzeln der charakteristischen Gleichungen von Schwingungsproblemen},
journal = {ZAMM - Journal of Applied Mathematics and Mechanics / Zeitschrift für Angewandte Mathematik und Mechanik},
volume = {2},
number = {5},
pages = {353-357},
doi = {https://doi.org/10.1002/zamm.19220020503},
url = {https://onlinelibrary.wiley.com/doi/abs/10.1002/zamm.19220020503},
eprint = {https://onlinelibrary.wiley.com/doi/pdf/10.1002/zamm.19220020503},
year = {1922}
}

\end{document}